\documentclass[12pt]{article}

\usepackage{PRIMEarxiv}

\usepackage[T1]{fontenc}    % use 8-bit T1 fonts
\usepackage{hyperref}       % hyperlinks
\usepackage{url}            % simple URL typesetting
\usepackage{booktabs}       % professional-quality tables
\usepackage{amsfonts}       % blackboard math symbols
\usepackage{nicefrac}       % compact symbols for 1/2, etc.
\usepackage{natbib}
\usepackage{microtype}      % microtypography
\usepackage{lipsum}
\usepackage{amssymb}
\usepackage{fancyhdr}       % header
\usepackage{amsmath}
\usepackage{enumitem}
\usepackage{booktabs}
\usepackage{tikz}
\usetikzlibrary{arrows.meta, shapes.geometric}
\usetikzlibrary{calc, arrows.meta}
\usepackage{subcaption}   % for \begin{subfigure}

\newtheorem{definition}{Definition}
\newtheorem{proposition}{Proposition}
\newtheorem{corollary}{Corollary}

\newtheorem{lemma}{Lemma}
\newtheorem{proof}{Proof}
\usepackage{algorithm}
\usepackage{algpseudocode}
\usepackage{algcompatible}
\usepackage{tikz}
\usepackage{multirow}
\usepackage{setspace}
\usetikzlibrary{positioning,calc,fit,arrows.meta}

\usepackage{graphicx} % Required for inserting images
\setkeys{Gin}{draft=false}

\usepackage{xcolor} % Required for color support
\usepackage{todonotes}
\title{Learning to Control Stabilization in \\Column Generation
}

\author{
 Olivia Wang, Reem Khir\\
 Edwardson School of Industrial Engineering \\
Purdue University, West Lafayette, USA \\
  \texttt{\{jiew,rkhir\}@purdue.edu} \\
}

\begin{document}
\maketitle

\begin{abstract}
Column generation is a widely used decomposition technique for large-scale linear programs, but it often suffers from slow convergence due to poor initial dual estimates and dual oscillations. 
Stabilization techniques such as smoothing and penalization can mitigate these issues, but their effectiveness depends heavily on parameter selection, which requires careful tuning to avoid 
degrading performance. This paper presents a common framework for smoothing and penalization, showing that despite their different mechanisms, both are governed by two design choices: a \textit{reference point} in the dual space and \textit{stabilization parameters} that regulate how strongly 
that reference influences pricing. Within this framework, we derive parameter bounds that ensure progress, analyze predicted duals as reference points, and establish convergence guarantees for both methods. These results motivate and guide the design of RLSCG, a reinforcement learning-guided framework that adaptively selects stabilization parameters at each iteration. Computational experiments on the Cutting Stock Problem show that RLSCG substantially reduces iteration count and computation time on most synthetic and benchmark instances relative to traditional column generation, rule-based adaptive stabilization, and learning-based column selection, with the largest gains on large-scale instances.

%Column Generation is a widely used decomposition technique for large-scale linear programming, but it generally suffers from poor initial dual estimates and slow convergence due to dual oscillations. While stabilization techniques such as smoothing and penalization can mitigate these issues, their effectiveness heavily depends on the choice of parameters, and inappropriate parameter selection can even lead to adverse effects. To enhance the performance of stabilization methods, this paper first presents a unified framework that formally connects smoothing and penalization methods to better understand how stabilization brings improvements, revealing their shared underlying mechanism of controlling dual fluctuations through stabilization parameters and reference points. Building on this, we propose RLSCG, a reinforcement learning-guided stabilization framework that dynamically selects stabilization parameters during iterations. Experimental results on the Cutting Stock Problem demonstrate that RLSCG significantly reduces solution time and iteration count compared to traditional column generation, adaptive stabilization methods, and recent learning-based approach.
\end{abstract}

% keywords can be removed
\keywords{Column Generation\and Stabilization\and Reinforcement Learning\and Large-scale Optimization }

\section{Introduction}
Column generation (CG) is a decomposition framework for solving large-scale linear programs (LPs) whose full variable set is too large to enumerate explicitly. Rather than solving the full master problem directly, CG starts with a restricted master problem (RMP) containing only a subset of variables. The dual solution of the RMP is then used to construct a pricing subproblem that identifies variables with negative reduced cost, which are added to the RMP prior to reoptimization. The procedure terminates when no such variable exists, at which point the current RMP solution is optimal for the full LP master problem.
Although CG is an LP-based method, it is widely applied in integer programming via branch-and-price across domains \citep{desaulniers2005column,maher2023integer}.

Despite its successes, the standard CG procedure faces inherent drawbacks when applied to large-scale problems. First, because the RMP initially contains only a small subset of variables, its dual solution may provide a poor approximation of the dual solution of the full master problem. Pricing may therefore generate variables that improve the current RMP but do not appear in the final optimal solution, a phenomenon known as the \textit{heading-in effect}. Second, degeneracy in the master problem may produce multiple optimal dual solutions, causing the dual multipliers to oscillate across iterations. This instability can lead to variables with little effect on the RMP objective and hence to slow convergence, often referred to as the \textit{tailing-off effect} \citep{vanderbeck2005implementing}.

To address these issues, various stabilization techniques have been developed for column generation, which is the focus of this paper. 
The central idea is to limit large fluctuations in the dual variables between iterations, thereby guiding the algorithm more steadily toward the optimal dual solution \citep{desaulniers2006column}. These techniques broadly fall into two categories. The first modifies the master problem to control dual behavior through bounds or penalty terms, including the \textit{boxstep} method of \cite{marsten1975boxstep}, which confines dual multipliers to prescribed intervals, and \textit{penalization} approaches, which impose costs for deviations from a reference region using quadratic \citep{schramm1992version}, linear--quadratic \citep{pinar1994smoothing}, or piecewise-linear penalties \citep{du1999stabilized}. The second category consists of \textit{smoothing} methods \citep{wentges1997weighted,neame1999nonsmooth,pessoa2018automation}, which post-process the dual values by computing the pricing dual as a convex combination of the current dual solution and a feasible interior point. A related line of work maintains centralized dual solutions via interior-point methods \citep{elhedhli2004integration, munari2015column}.
%rousseau2007interior,

Although these techniques can accelerate convergence, their effectiveness depends strongly on the choice of parameters. Each scheme introduces its own: penalization methods require penalty coefficients or box sizes, smoothing methods require a smoothing weight and a reference point update rule, and interior-point approaches require centrality tolerances.  In practice, these parameters are typically set heuristically or held fixed across iteration \citep{du1999stabilized,wentges1997weighted,neame1999nonsmooth}. Yet this can be problematic---\cite{pessoa2018automation} show that convergence speed is highly sensitive to the smoothing weight, with effective values varying across problem instances, and similar sensitivity has been reported for penalization parameters \citep{amor2009choice}. When poorly tuned, stabilization can reduce computational savings or even underperform standard CG.

% Recent work has sought to address this through adaptive mechanisms. 
Recent work has improved stabilization through adaptive rules. For smoothing, \cite{pessoa2018automation} reduce the smoothing weight upon detecting \emph{mispricing}---when the stabilized dual fails to generate an improving column. For penalization, \cite{kraul2023machine} use regression-predicted duals as a fixed box center, while \cite{shen2024adaptive} adapt the penalty based on minimum reduced cost; both keep the reference dual fixed and update only the penalty parameter. 
Exploring other penalization types, \cite{sugishita2024use} learn initial dual values to warm-start quadratic penalization. Similarly, \cite{sarkar2025accelerating} also predict duals, but use them only to initialize the box center before reverting to the standard update based on the previous dual iterate.
% \edit{
In a parallel line of work, \cite{fang2025learning} train a reinforcement learning (RL) agent that directly outputs the perturbed RMP dual at each iteration, rather than modulating an existing smoothing or penalization mechanism; their action space is dimension-dependent, scaling with the number of constraints.
% }
% . While innovative, this approach requires a continuous action space that scales with the number of constraints and lacks a convergence fallback.

Beyond stabilization, 
ML has been used to accelerate CG through column selection: \cite{chi2022deep} applied RL to select from a candidate set, later extended to multiple columns \citep{yuan2024reinforcement,hu2024ffcg}. These methods do not modify the dual used in pricing, so the candidate pool arises from the unstabilized RMP dual and may miss columns that stabilization would surface.
This raises the question of whether learning is more effective for \emph{column selection} or for adapting the \emph{stabilization mechanism} itself; a question we investigate on the cutting stock problem.
% Beyond stabilization, machine learning has been applied to accelerate CG through column selection. \cite{chi2022deep} applied RL to select columns from a candidate set, later extended to multiple columns \citep{yuan2024reinforcement,hu2024ffcg}. While these methods influence convergence through column choice, they do not directly modify or regularize the dual solution used in pricing. Consequently, the candidate pool is still generated from the unstabilized RMP dual, potentially missing columns that would arise under stabilization.
% This raises the question of whether learning is more effective for \emph{column selection} or for adapting the \emph{stabilization mechanism} itself; a question we investigate in the context of the cutting stock problem.

This paper focuses on two widely used classes of stabilization methods, penalization and smoothing, both of which integrate naturally with the standard simplex-based CG procedure. Though these methods operate through different mechanisms, we investigate whether they admit a unified view of dual stabilization and whether that view can support a general adaptive parameter-selection approach for accelerating convergence. Our contributions are as follows.

\begin{enumerate}

    \item We present smoothing and penalization within a common structure defined by a dual-space \textit{reference point} and \textit{stabilization parameters} that control its influence on pricing. Within this framework, we derive parameter bounds ensuring progress and establish convergence guarantees.

    \item We propose RLSCG, a reinforcement learning--guided framework for stabilized column generation that replaces fixed or rule-based parameter updates with a learned policy. Although the bounds derived above are not enforced online, they inform the policy's design: its state representation combines local structural features of the current RMP with global progress and mispricing-related signals to adaptively select the stabilization level at each iteration.

    % \item We evaluate RLSCG on the cutting stock problem against traditional column generation, rule-based stabilization, and learning-based column selection. Results show substantial reductions in iteration count and computation time across diverse instances and unseen patterns. \edit{In particular, while rule-based stabilization can underperform traditional CG, RLSCG consistently improves upon it, highlighting that stabilization can accelerate convergence only when its magnitude is carefully controlled.}

    % \item \edit{We extend the use of predicted duals as reference points, previously studied for penalization, to smoothing via a hybrid strategy with provable convergence. Empirically, predicted duals substantially benefit rule-based smoothing, partially closing its gap with RLSCG.}

    \item     We evaluate RLSCG on the cutting stock problem against traditional column generation, rule-based stabilization, and learning-based column selection. Results show substantial reductions in iteration count and computation time across diverse instances and unseen patterns. Notably, while rule-based stabilization can sometimes underperform traditional CG, RLSCG consistently accelerates convergence by carefully controlling the stabilization magnitude. Furthermore, we extend the use of predicted duals as reference points to smoothing via a hybrid strategy with provable convergence, which empirically benefits rule-based smoothing and partially closes its gap with RLSCG.

\end{enumerate}
% \begin{enumerate}
 
%     \item We present smoothing and penalization within a common structure defined by a dual-space \textit{reference point} and \textit{stabilization parameters} controlling their influence on pricing. Within this framework, we derive parameter bounds ensuring progress, analyze predicted duals as reference points, and establish convergence guarantees.
    
%     \item We propose RLSCG, a reinforcement learning–guided framework for stabilized column generation that replaces fixed or rule-based parameter updates with a learned policy. Although the bounds derived above are not enforced online, they inform the policy’s design: its state representation combines local structural features of the current RMP with global progress and mispricing-related signals to adaptively select the stabilization level at each iteration.

%     \item We evaluate RLSCG on the cutting stock problem against traditional column generation, rule-based stabilization, and learning-based column selection approaches. The results demonstrate substantial reductions in computation time and iteration count across diverse instance structures, including patterns not seen during training.
% \end{enumerate}

The remainder of this paper is organized as follows. Section~\ref{sec:prelim} provides preliminary background on stabilization. Section~\ref{sec:framework} introduces the unified stabilization framework, analyzing the impact of parameters and reference points. In Section~\ref{sec:method}, we detail the RLSCG framework. Section~\ref{sec:experiment} presents our experimental results, and Section~\ref{sec:conclusion} concludes the paper.

\section{Preliminaries}\label{sec:prelim}

% Consider the master problem (MP) as a linear program in standard form:
% $$
% \min_{x \ge 0}\; c^\top x \qquad \text{s.t.}\qquad Ax=b,
% $$
% with its dual formulation:
% $$
% \max_{\pi}\; b^\top \pi \qquad \text{s.t.}\qquad A^\top \pi \le c.
% $$

% In column generation, the MP is solved by iteratively expanding a restricted master problem (RMP). 
% At iteration $t$, 
% % let $Q_t \subseteq Q$ denote the set of columns currently contained in the RMP, 
% the RMP includes only a subset of columns from $A$, denoted by the index set $R$. Solving the RMP produces a primal solution $x^t$ with objective value $c_R^\top x^t$ and an associated optimal dual solution $\pi_{\mathrm{out}}^t$.

% Given a dual vector $\pi$, the reduced cost of a candidate column $j$ is
% $$
% r_j(\pi) := c_j - A_j^\top \pi,
% $$
% where $A_j$ is the $j$th column of $A$ and $c_j$ is its cost coefficient. The pricing oracle then solves
% $$
% \sigma(\pi) := \min_j r_j(\pi),
% $$
% and returns a column $j$ attaining this minimum. If $\sigma(\pi) < 0$, the corresponding column has negative reduced cost and can improve the current RMP when added. If $\sigma(\pi) \ge 0$, then $r_j(\pi)\ge 0$ for all $j$, hence $\pi$ is dual-feasible for the full MP; consequently, the current RMP solution is optimal for the MP.

Consider the \emph{master problem} (MP): $\min\{c^\top x \mid x \in P := \{x \in \mathbb{R}^n_+ : Ax=b\}\}$, with dual $\max\{b^\top \pi \mid \pi \in D := \{\pi \in \mathbb{R}^m : A^\top \pi \le c\}\}$. 
In column generation, the MP is solved by iteratively expanding a restricted master problem (RMP). 
At iteration $t$, let $R \subseteq \{1,\dots,n\}$ index the selected columns of $A$. 
Solving the RMP yields a primal solution $x_t$ with objective value $c_R^\top x_t$ and an associated optimal dual solution $\pi_t^{\mathrm{out}}$.

Given a dual vector $\pi$, the reduced cost of a column $j$ is $r_j(\pi) := c_j - A_j^\top \pi$, where $A_j$ denotes the $j$th column of $A$ and $c_j$ its cost coefficient. 
The pricing oracle solves $\sigma(\pi) := \min_j r_j(\pi)$ and returns a column $j$ attaining this minimum. 
If $\sigma(\pi) < 0$, the corresponding column has negative reduced cost. 
If $\sigma(\pi) \ge 0$, then $r_j(\pi) \ge 0$ for all $j$, implying that $\pi$ is dual-feasible for the full MP; therefore, the current RMP solution is optimal for the MP.

\subsection{Column Generation and Stabilization} \label{subsec:prelim}

In what follows, we focus on two stabilization approaches: smoothing and penalization.

\smallskip
\noindent\textbf{Smoothing.}
At iteration $t$, let $\pi^{\mathrm{out}}_t$ be the dual solution of the current RMP, and $\pi^{\mathrm{in}}_t$ be a feasible dual \emph{in-point}.
Given $\alpha_t \in [0,1)$, we define the stabilized dual vector used in the pricing step as
\[
\pi^{\mathrm{sep}}_t
= \alpha_t \pi^{\mathrm{in}}_t + (1-\alpha_t)\pi^{\mathrm{out}}_t.
\]
Here, $\alpha_t$ controls the degree of smoothing.

\smallskip
\noindent\textbf{Penalization.}
Penalization stabilizes CG by modifying the RMP objective with a penalty that discourages abrupt changes in the associated dual multipliers.
A general stabilized RMP can be written as
\[
\min_{x \ge 0} \; c_R^\top x + P(x),
\]
where $P(x)$ is a penalty term.
We use the \emph{polyhedral} penalization of \citep{du1999stabilized} as a representative example.
The idea is to relax the RMP equalities with bounded nonnegative variables $y_-$ and $y_+$ and penalize them in the objective.
Specifically, given bounds $\varepsilon_-$ and $\varepsilon_+$ and penalty vectors $\delta_-$ and $\delta_+$, we solve the stabilized RMP
\begin{equation}\label{eq:penalization}
\begin{aligned}
\min_{x,\,y_-,y_+}\quad & c_R^\top x - \delta_{-}^\top y_- + \delta_{+}^\top y_+ \\
\text{s.t.}\quad & A_R x - y_- + y_+ = b,\quad 0 \le y_- \le \varepsilon_-,\quad 0 \le y_+ \le \varepsilon_+,\quad x \ge 0,
\end{aligned}
\end{equation}
where $A_R$ and $c_R$ are the constraint matrix and cost vector of the current RMP.
The dual of \eqref{eq:penalization} is
\begin{equation}\label{eq:penalization-dual}
\begin{aligned}
\max_{\pi,\,w_-,w_+}\quad & b^\top \pi - \varepsilon_{-}^\top w_- - \varepsilon_{+}^\top w_+ \\
\text{s.t.}\quad & A_R^\top \pi \le c_R,\quad \pi - w_+ \le \delta_+,\quad -\pi - w_- \le -\delta_-,\quad w_-, w_+ \ge 0.
\end{aligned}
\end{equation}

Throughout this paper, we use the \emph{single-point} penalty-box configuration used in \cite{kraul2023machine} and \cite{shen2024adaptive}, with
$
\delta_-=\delta_+=\pi_t^{\mathrm{center}}.
$
Thus, any deviation from $\pi_t^{\mathrm{center}}$ is penalized. Other variants, including interval-based penalty boxes and Euclidean proximal regularization, can be used in our framework as well (see Appendix~\ref{appendix:alternative_penalization}).

\subsection{Stabilization and Pricing Distortion}\label{sec:pricing_distortion}

Standard CG prices at the optimal dual solution $\pi_t^{\mathrm{out}}$ of the current RMP. Since the RMP is only a partial approximation of the full MP, $\pi_t^{\mathrm{out}}$ may be a poor proxy, especially early in the algorithm. Moreover, degeneracy can make the dual optimum nonunique, so the particular $\pi_t^{\mathrm{out}}$ returned by the solver may vary across iterations, leading to fluctuations in both the dual sequence and pricing decisions. To address this, stabilized CG replaces $\pi_t^{\mathrm{out}}$ in pricing with a perturbed dual vector, deliberately modifying the reduced-cost signals to promote more stable progress. We refer to this modification as \emph{pricing distortion}.

% \begin{definition}[Pricing Distortion]
% At iteration $t$, pricing distortion occurs when the pricing step selects a column $q \in Q$ that is \emph{not} a minimizer of the reduced cost evaluated at the current RMP dual optimum $\pi_t^{\mathrm{out}}$.
% \end{definition}
\begin{definition}[Pricing Distortion]
At iteration $t$, pricing distortion occurs if the pricing oracle returns a column $q \in Q$ with $
q \notin \arg\min_{q' \in Q} r_{q'}(\pi_t^{\mathrm{out}}).
$
\end{definition}

% \begin{definition}
% Pricing Distortion refers to the phenomenon where the pricing oracle returns a column $q \in Q$ that does not minimize the reduced cost with respect to the current RMP dual solution $\pi_t^{\mathrm{out}}$. 
% % That is,
% % \[
% % q \notin \mathop{\arg\min}_{k \in Q} r_k(\pi_t^{\text{out}}).
% % \]
% \end{definition}

% There are various sources of pricing distortion. Heuristic pricing or approximate labeling algorithms introduce distortion by not attempting to identify the exact minimum reduced cost. For example, recent learning-based method \citep{chi2022deep} introduces distortion by selecting a column from a candidate pool of negative reduced cost columns generated by $\pi_t^{\mathrm{out}}$. In this work, we focus on distortion induced by \textit{stabilization methods}, which occurs due to the modification to the dual vector itself. 
Pricing distortion may also arise from heuristic or approximate pricing. In this paper, however, we assume exact pricing and focus only on stabilization-induced distortion, where reduced costs are evaluated at a stabilized dual $\pi_t^{\mathrm{sep}} \neq \pi_t^{\mathrm{out}}$.
% There are several potential sources of pricing distortion. For example, heuristic pricing or approximate labeling algorithms may introduce distortion by failing to identify the true minimum reduced cost. In this work, however, we assume the pricing oracle is exact. We focus instead on distortion induced by stabilization methods, where the pricing oracle evaluates reduced costs using a stabilized dual vector $\pi_t^{\mathrm{sep}}$ that differs from the optimal RMP dual solution $\pi_t^{\mathrm{out}}$.

A possible consequence of pricing distortion is \emph{mispricing} \citep{pessoa2018automation}.
In the context of stabilization, this occurs when $\pi_t^{\mathrm{sep}}$ deviates so far from $\pi_t^{\mathrm{out}}$ that pricing fails to return a column that separates $\pi_t^{\mathrm{out}}$, even though one exists.
% A consequence of pricing distortion is the potential occurrence of \textit{mispricing}, a phenomenon discussed by \cite{pessoa2018automation}. Mispricing arises when the stabilized dual deviates so far from the current RMP dual solution that the pricing oracle fails to identify a column that improves the current RMP, even though such a column exists.

% \begin{definition}\label{def:mispricing}
% % Let $\sigma(\pi) := \min_{q \in Q} r_q(\pi)$ denote the optimal value of the pricing problem. 
% At iteration $t$, suppose the algorithm has not yet converged, i.e., $\sigma(\pi_t^{\mathrm{out}}) < 0$. A \emph{mispricing} event occurs at that iteration if the pricing oracle returns a column $q \in Q$ whose reduced cost at $\pi_t^{\mathrm{out}}$ is nonnegative:
% $
% r_q(\pi_t^{\mathrm{out}}) \ge 0.
% $
% \end{definition}
\begin{definition}\label{def:mispricing}
At iteration $t$, assuming $\sigma(\pi_t^{\mathrm{out}}) < 0$, a \emph{mispricing} event occurs if the pricing oracle returns a column $q \in Q$ with $
r_q(\pi_t^{\mathrm{out}}) \ge 0.
$
\end{definition}

Based on how the generated column relates to the current RMP dual solution $\pi_t^{\mathrm{out}}$, we distinguish three pricing regimes. Let $q_t$ denote the column produced at iteration $t$.

% \begin{itemize}
%     \item \textit{Standard pricing:} The oracle returns an exact minimizer, $q_t \in \arg\min_{q \in Q} r_q(\pi_t^{\text{out}})$. If $\sigma(\pi_t^{\mathrm{out}}) < 0$, then the selected column is separating for $\pi_t^{\mathrm{out}}$. 
% % In the nondegenerate case, adding this column yields a strict improvement in the current RMP objective and leads to a new dual solution $\pi_{t+1}^{\mathrm{out}}$.

%     \item \textit{Weak pricing distortion:} The oracle returns a column that is not a minimizer at $\pi_t^{\mathrm{out}}$, but still has negative reduced cost:
%     $r_{q_t}\!\left(\pi_t^{\mathrm{out}}\right) < 0.$ The resulting cut still separates $\pi_t^{\mathrm{out}}$.  
% % In the nondegenerate case, this yields a strict improvement in the current RMP objective, although typically smaller than that of the greedy choice.

%     \item \textit{Strong pricing distortion (mispricing):} The oracle returns a column whose reduced cost at $\pi_t^{\mathrm{out}}$ is nonnegative:    $r_{q_t}\!\left(\pi_t^{\mathrm{out}}\right) \ge 0.$
%     This is precisely the mispricing event in Definition~\ref{def:mispricing}. The cut does not separate $\pi_t^{\mathrm{out}}$, so $\pi_t^{\mathrm{out}}$ remains feasible for the updated dual problem. Consequently, the updated RMP has the same optimal value at this iteration. 
% \end{itemize}

\begin{itemize}
    \item \textit{Standard pricing:} $q_t \in \arg\min_{q \in Q} r_q(\pi_t^{\mathrm{out}})$. If $\sigma(\pi_t^{\mathrm{out}}) < 0$, then $q_t$ separates $\pi_t^{\mathrm{out}}$.
    \item \textit{Weak pricing distortion:} $q_t$ is not a minimizer at $\pi_t^{\mathrm{out}}$, but still satisfies
    $
    r_{q_t}(\pi_t^{\mathrm{out}}) < 0.
    $
    The resulting cut still separates $\pi_t^{\mathrm{out}}$.
    \item \textit{Strong pricing distortion (mispricing):} 
    $
    r_{q_t}(\pi_t^{\mathrm{out}}) \ge 0.
    $
    % This is precisely the mispricing event in Definition~\ref{def:mispricing}. 
The cut does not separate $\pi_t^{\mathrm{out}}$, and the RMP objective does not improve at iteration $t$.
\end{itemize}

\paragraph{Illustrative Example.}
% To illustrate how pricing distortion may affect the convergence path, we use a two-dimensional example and describe column generation from a dual perspective. The RMP can be viewed as enforcing only a subset of the dual-feasibility inequalities, with each pricing step adding one additional inequality. We start from an initial restriction that includes only the box bounds,
% \begin{align*}
%     \max_{\pi}\quad & \pi_1 + 0.1\pi_2 \\
%     \text{s.t.}\quad
%     & 0 \le \pi_1 \le 5,\quad 0 \le \pi_2 \le 5,
% \end{align*}
% and then progressively strengthen it by appending the remaining inequalities. The \emph{full} dual feasible region is obtained once the following three constraints are included:
% \begin{align*}
% \pi_1 + \pi_2 \le 6 \quad (A),
% \qquad
% \pi_1 - 2\pi_2 \le -1 \quad (B),
% \qquad
% \pi_1 - \pi_2 \le 1 \quad (C).
% \end{align*}

% At iteration $t$, solving the current restricted problem yields an optimal dual vector $\pi_t^{\mathrm{out}}$. The pricing step then selects one of the not-yet-added constraints among $(A)$--$(C)$ and adds it to form the next RMP. In this example, the optimal solution of the full problem is the vertex $\pi^\star$ defined by the intersection of constraints $(A)$ and $(C)$.

% We compare three pricing regimes: (a) standard pricing, (b) weak distortion, and (c) strong distortion. The resulting trajectories of $\{\pi_t^{\mathrm{out}}\}$ are shown in Figure~\ref{fig:toy_pricing_steps}.
% Detailed calculations for this toy example are deferred to Appendix~\ref{app:toy_details}.

To illustrate how pricing distortion can alter the convergence path, consider the two-dimensional dual problem
\[
\max_{\pi}\ \pi_1 + 0.1\pi_2
\qquad \text{s.t.} \qquad
0 \le \pi_1 \le 5,\quad 0 \le \pi_2 \le 5,
\]
and suppose the full dual feasible region is obtained after adding
\[
\pi_1 + \pi_2 \le 6 \ (A), \qquad
\pi_1 - 2\pi_2 \le -1 \ (B), \qquad
\pi_1 - \pi_2 \le 1 \ (C).
\]
Thus, the RMP can be viewed as enforcing only a subset of the full dual inequalities, with each pricing step adding one more. 
% At iteration $t$, pricing selects one of the not-yet-added inequalities among $(A)$--$(C)$, and the full optimum is the vertex $\pi^\star = A \cap C$. 
Figure~\ref{fig:toy_pricing_steps} compares the resulting trajectories under different pricing strategies. Detailed calculations are deferred to Appendix~\ref{app:toy_details}.

\begin{figure}[ht!]
\centering
\captionsetup[subfigure]{justification=centering,singlelinecheck=true,skip=2pt,font=footnotesize}

\begingroup
\tikzset{
  ax/.style={-Stealth, line width=0.55pt},
  tick/.style={line width=0.4pt},
  trueD/.style={draw=gray!55, fill=gray!12, line width=0.7pt},
  DR/.style={draw=black, line width=1.0pt},
  prevDR/.style={draw=gray!60, dashed, line width=0.75pt},
  newcut/.style={draw=red, dashed, line width=0.85pt},
  piout/.style={circle, fill=black, inner sep=1pt},
  pisep/.style={regular polygon, regular polygon sides=3, draw=black, fill=white, inner sep=0.6pt},
  lab/.style={font=\scriptsize, inner sep=1pt, text=red},
  undertitle/.style={font=\fontsize{5.8}{6.6}\selectfont, anchor=north},
  pilab/.style={font=\scriptsize, inner sep=1pt}
}

\newcommand{\PanelSetup}{%
  \draw[ax] (0,0) -- (5.55,0);
  \draw[ax] (0,0) -- (0,5.55);
  \draw[tick] (5,0.07) -- (5,-0.07);
  \draw[tick] (0.07,5) -- (-0.07,5);
  \node[font=\scriptsize, anchor=north] at (5,-0.16) {$5$};
  \node[font=\scriptsize, anchor=east]  at (-0.16,5) {$5$};
  \node[font=\scriptsize, anchor=north] at (0,-0.16) {$0$};
  \node[font=\scriptsize, anchor=west]  at (5.30,0) {$\pi_1$};
  \node[font=\scriptsize, anchor=south] at (0,5.30) {$\pi_2$};
}

\newcommand{\DrawTrueD}{%
  \path[trueD] (0,0.5) -- (3,2) -- (3.5,2.5) -- (1,5) -- (0,5) -- cycle;
}

\newcommand{\CutA}{%
  \draw[newcut] (1,5) -- (5,1);
  \node[lab] at (2.80,2.75) {$A$};
}
\newcommand{\CutB}{%
  \draw[newcut] (0,0.5) -- (5,3);
  \node[lab] at (2.25,2.00) {$B$};
}
\newcommand{\CutC}{%
  \draw[newcut] (1,0) -- (5,4);
  \node[lab] at (2.75,2.20) {$C$};
}

% =========================
% Row 1: (a) Standard pricing
% =========================
\begin{subfigure}[t]{\textwidth}
\centering
\begin{tikzpicture}[x=0.48cm,y=0.48cm,scale=0.88,transform shape]
\def\xsep{6.6}

\begin{scope}[shift={(0,0)}]
  \PanelSetup
  \DrawTrueD
  \draw[DR] (0,0) rectangle (5,5);
  \node[piout] at (5,5) {};
  \node[pilab, anchor=north east] at (5,5) {$\pi^{out}_0$};
  \node[undertitle,align=center] at (2.6,-1.12) {$t=0$: $\mathcal D_R^0$\\ ($z_R^0=5.50$)};
\end{scope}

\begin{scope}[shift={(\xsep,0)}]
  \PanelSetup
  \DrawTrueD
  \draw[prevDR] (0,0) rectangle (5,5);
  \draw[DR] (0,0) -- (5,0) -- (5,1) -- (1,5) -- (0,5) -- cycle;
  \CutA
  \node[piout] at (5,1) {};
  \node[pilab, anchor=north east] at (5,1) {$\pi^{out}_1$};
  \node[undertitle,align=center] at (2.6,-0.98) {$t=1$: add cut $A$\\ ($z_R^1=5.10$)};
\end{scope}

\begin{scope}[shift={(2*\xsep,0)}]
  \PanelSetup
  \DrawTrueD
  \draw[prevDR] (0,0) -- (5,0) -- (5,1) -- (1,5) -- (0,5) -- cycle;
  \draw[DR] (0,0.5) -- (11/3,7/3) -- (1,5) -- (0,5) -- cycle;
  \CutB
  \node[piout] at (11/3,7/3) {};
  \node[pilab, anchor=north west] at (11/3,7/3) {$\pi^{out}_2$};
  \node[undertitle,align=center] at (2.6,-0.98) {$t=2$: add cut $B$\\ ($z_R^2=3.90$)};
\end{scope}

\begin{scope}[shift={(3*\xsep,0)}]
  \PanelSetup
  \DrawTrueD
  \draw[prevDR] (0,0.5) -- (11/3,7/3) -- (1,5) -- (0,5) -- cycle;
  \draw[DR] (0,0.5) -- (3,2) -- (3.5,2.5) -- (1,5) -- (0,5) -- cycle;
  \CutC
  \node[star, star points=5, star point ratio=1.8, fill=black, draw=black, inner sep=0.7pt] at (3.5,2.5) {};
  \node[pilab, anchor=south west] at (3.5,2.5) {$\pi^\star$};
  \node[undertitle,align=center] at (2.6,-0.98) {$t=3$: add cut $C$\\ converge\\ ($z^\star=3.75$)};
\end{scope}

\end{tikzpicture}
\caption{Standard pricing}
\end{subfigure}

\vspace{0.2em}

% =========================
% Row 2: (b) and (c) side by side
% =========================
\begin{subfigure}[t]{0.49\textwidth}
\centering
\resizebox{\linewidth}{!}{%
\begin{tikzpicture}[x=0.44cm,y=0.44cm,scale=0.82,transform shape]
\def\xsep{6.25}

\begin{scope}[shift={(0,0)}]
  \PanelSetup
  \DrawTrueD
  \draw[DR] (0,0) rectangle (5,5);
  \node[piout] at (5,5) {};
  \node[pilab, anchor=north east] at (5,5) {$\pi^{out}_0$};
  \node[undertitle,align=center] at (2.6,-0.96) {$t=0$: $\mathcal D_R^0$\\ ($z_R^0=5.50$)};
\end{scope}

\begin{scope}[shift={(\xsep,0)}]
  \PanelSetup
  \DrawTrueD
  \draw[prevDR] (0,0) rectangle (5,5);
  \draw[DR] (0,0) -- (5,0) -- (5,1) -- (1,5) -- (0,5) -- cycle;
  \CutA
  \node[piout] at (5,1) {};
  \node[pilab, anchor=north east] at (5,1) {$\pi^{out}_1$};
  \node[undertitle,align=center] at (2.6,-0.96) {$t=1$: add cut $A$\\ ($z_R^1=5.10$)};
\end{scope}

\begin{scope}[shift={(2*\xsep,0)}]
  \PanelSetup
  \DrawTrueD
  \draw[prevDR] (0,0) -- (5,0) -- (5,1) -- (1,5) -- (0,5) -- cycle;
  \draw[DR] (0,0) -- (1,0) -- (3.5,2.5) -- (1,5) -- (0,5) -- cycle;
  \CutC
  \node[star, star points=5, star point ratio=1.8, fill=black, draw=black, inner sep=0.7pt] at (3.5,2.5) {};
  \node[pilab, anchor=south west] at (3.5,2.5) {$\pi^\star$};
  \node[undertitle,align=center] at (2.6,-0.96) {$t=2$: add cut $C$\\ converge\\ ($z^\star=3.75$)};
\end{scope}

\end{tikzpicture}%
}
\caption{Weak pricing distortion}
\end{subfigure}\hfill
\begin{subfigure}[t]{0.49\textwidth}
\centering
\resizebox{\linewidth}{!}{%
\begin{tikzpicture}[x=0.44cm,y=0.44cm,scale=0.82,transform shape]
\def\xsep{6.25}

\begin{scope}[shift={(0,0)}]
  \PanelSetup
  \DrawTrueD
  \draw[DR] (0,0) rectangle (5,5);
  \node[piout] at (5,5) {};
  \node[pilab, anchor=north east] at (5,5) {$\pi^{out}_0$};
  \node[undertitle,align=center] at (2.6,-0.96) {$t=0$: $\mathcal D_R^0$\\ ($z_R^0=5.50$)};
\end{scope}

\begin{scope}[shift={(\xsep,0)}]
  \PanelSetup
  \DrawTrueD
  \draw[prevDR] (0,0) rectangle (5,5);
  \draw[DR] (0,0) -- (1,0) -- (5,4) -- (5,5) -- (0,5) -- cycle;
  \CutC
  \node[piout] at (5,5) {};
  \node[pilab, anchor=north east] at (5,5) {$\pi^{out}_1$};
  \node[undertitle,align=center] at (2.6,-0.96) {$t=1$: add cut $C$\\ mispricing\\ ($z_R^1=5.50$)};
\end{scope}

\begin{scope}[shift={(2*\xsep,0)}]
  \PanelSetup
  \DrawTrueD
  \draw[prevDR] (0,0) -- (1,0) -- (5,4) -- (5,5) -- (0,5) -- cycle;
  \draw[DR] (0,0) -- (1,0) -- (3.5,2.5) -- (1,5) -- (0,5) -- cycle;
  \CutA
  \node[star, star points=5, star point ratio=1.8, fill=black, draw=black, inner sep=0.7pt] at (3.5,2.5) {};
  \node[pilab, anchor=south west] at (3.5,2.5) {$\pi^\star$};
  \node[undertitle,align=center] at (2.6,-0.96) {$t=2$: add cut $A$\\ converge\\ ($z^\star=3.75$)};
\end{scope}

\end{tikzpicture}%
}
\caption{Strong pricing distortion}
\end{subfigure}

\caption{Illustration of the CG iterative process in the dual space under standard pricing, weak distortion, and strong distortion. Thick polygons show the updated restricted-dual feasible set $\mathcal D_R^t$, gray dashed outlines show the previous set $\mathcal D_R^{t-1}$, the shaded region is the full dual feasible set $\mathcal D$, and the red dashed segment is the cut added at the current step. Filled dots denote $\pi_t^{\mathrm{out}}$ and the star denotes $\pi^\star$.}
\label{fig:toy_pricing_steps}
\endgroup
\end{figure}
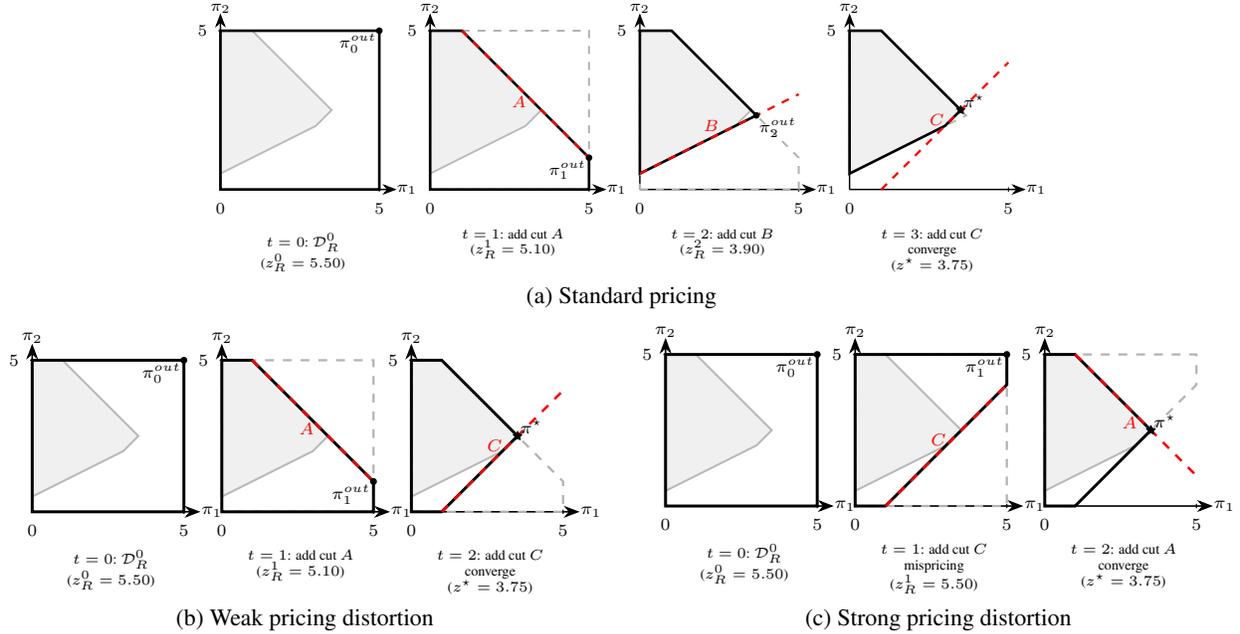

% \edit{
This example shows that distortion can change which inequalities become active later: a step with no immediate improvement may still redirect the search and avoid a detour induced by standard pricing.
The same mechanism, however, can be detrimental. When $\pi_t^{\mathrm{out}}$ is already close to $\pi^\star$, it provides a reliable pricing signal, and excessive distortion may induce repeated mispricing, delaying the columns needed to certify optimality. Existing methods therefore regulate how far pricing departs from $\pi_t^{\mathrm{out}}$: adaptive smoothing \citep{pessoa2018automation} shrinks the smoothing parameter after mispricing, and RLCG \citep{chi2022deep} restricts selection to columns with negative reduced cost under $\pi_t^{\mathrm{out}}$. Such safeguards reduce mispricing but may sacrifice useful exploration.
% }

% This example shows that distortion can change which inequalities become active later: 
% even a step with no immediate objective improvement may still redirect the search and help avoid a detour induced by standard pricing.
% However, this same mechanism can be detrimental. When the unperturbed dual $\pi_t^{\mathrm{out}}$ is already close to the true dual optimum $\pi^\star$, it provides a reliable pricing signal, and excessive distortion may induce repeated mispricing and delay the addition of the columns needed to certify optimality. Existing methods therefore regulate how far pricing departs from $\pi_t^{\mathrm{out}}$. For example, the adaptive smoothing method of \cite{pessoa2018automation} reduces the smoothing parameter after mispricing is detected, moving $\pi_t^{\mathrm{sep}}$ closer to $\pi_t^{\mathrm{out}}$ to recover separating cuts. Likewise, methods such as RLCG \citep{chi2022deep} restrict the selected column to one with negative reduced cost under $\pi_t^{\mathrm{out}}$. These safeguards reduce immediate mispricing, but may also sacrifice useful exploration.

\section{Stabilization Framework}\label{sec:framework}

In this section, we introduce a unified stabilization framework that covers both smoothing and penalization. 
The framework clarifies how stabilization parameters enter the pricing step and affect the generated columns, provides conditions under which stabilization preserves convergence, and discusses the use of predicted dual vectors as reference points.
% Using this framework, we describe how the stabilization parameters enter the pricing step and, in turn, influence the generated columns. We then provide conditions under which stabilization preserves convergence to an optimal solution. Finally, we discuss the implications of using a predicted dual vector to guide stabilization, rather than relying solely on dual information obtained from the RMP, typically using $\pi_t^{\mathrm{out}}$ or historical in-points.

\subsection{A General Stabilization Framework}\label{subsec:stabilization framework}

Although smoothing and penalization are typically presented separately, they share a common underlying structure: the dual vector used in pricing is constructed from a \textit{reference dual point} and \textit{stabilization parameters}, together with information from the current RMP. In this section, we make this structure explicit through a unified framework capturing both classes of methods. Formally, the framework consists of the following components:

%We unify smoothing and penalization by viewing both as constructing the pricing dual from a reference point, stabilization parameters, and the current RMP.
 
\begin{itemize}

    \item \textit{Reference dual point} $\pi_t^{\mathrm{ref}}$:  
    A designated point in the dual space used by the stabilization scheme when constructing the dual vector used in pricing.
    
    \item \textit{Stabilization parameters} $\theta_t$:  
    A set of method-specific parameters that control the degree of stabilization relative to the reference point $\pi_t^{\mathrm{ref}}$.
% , where $\Theta$ denotes the parameter space.
    
    \item \textit{Stabilization operator} $f_t$:  
    A mapping that combines $\pi_t^{\mathrm{ref}}$, $\theta_t$, and the current $RMP_t$ to produce the stabilized dual vector used in pricing: $  \pi_t^{\mathrm{sep}} = f_t(\pi_t^{\mathrm{ref}}, \theta_t; RMP_t).$
    % \[
    % \pi_t^{\mathrm{sep}} = f_t(\pi_t^{\mathrm{ref}}, \theta_t; RMP_t).
    % \]

\end{itemize}

We next instantiate this framework for smoothing and penalization. 
Figure~\ref{fig:stabilization_framework} illustrates the resulting 
information flow for each method.

\begin{figure}[ht!]
\centering
\captionsetup[subfigure]{justification=centering,font=small}

\begingroup
\tikzset{
    >=latex, % <-- Changed to standard lowercase latex arrow
    font=\normalsize,
    box/.style={
        draw,
        rectangle,
        align=center,
        inner sep=5pt,
        minimum height=1.00cm,
        line width=0.75pt
    },
    flow/.style={->, line width=0.72pt},
    plain/.style={line width=0.72pt}
}

% =========================
% (a) Standard CG
% =========================
\begin{subfigure}[t]{0.29\textwidth}
\centering
\begin{tikzpicture}[x=1cm,y=1cm,scale=0.62,transform shape]
\path[use as bounding box] (0,-0.75) rectangle (7.35,3.35);

\node[box, minimum width=3.25cm, minimum height=1.00cm] (pricing) at (3.55,0.00)
{\textbf{Pricing Problem}};

\node[box, minimum width=1.55cm, minimum height=1.00cm] (rmp) at (3.55,2.30)
{$RMP_t$};

% Left route: pricing -> left -> up -> right -> RMP
\coordinate (L1) at (0.80,0.00);
\coordinate (L2) at (0.80,2.30);
\coordinate (L3) at (2.28,2.30);

\draw[plain] (pricing.west) -- (L1) -- (L2) -- (L3);
\draw[flow]  (L3) -- (rmp.west);

\node[align=right, anchor=east] at (0.60,1.15) {New\\column};

% Right route: RMP -> dual -> down -> left -> pricing
\coordinate (R1) at (6.1,2.30);
\coordinate (R2) at (6.1,0.00);

\draw[plain] (rmp.east) -- (R1) -- (R2);
\draw[flow]  (R2) -- (pricing.east);

\node[above] at (4.95,2.44) {$\pi_t^{\mathrm{out}}$};

\end{tikzpicture}
\caption{Standard CG}
\end{subfigure}
\hfill
% =========================
% (b) Smoothing
% =========================
\begin{subfigure}[t]{0.38\textwidth}
\centering
\begin{tikzpicture}[x=1cm,y=1cm,scale=0.62,transform shape]
\path[use as bounding box] (0,-0.85) rectangle (9.55,3.45);

\node[box, minimum width=3.25cm, minimum height=1.00cm] (pricing) at (3.95,0.00)
{\textbf{Pricing Problem}};

\node[box, minimum width=1.55cm, minimum height=1.00cm] (rmp) at (1.70,2.30)
{$RMP_t$};

% smoothing box moved slightly right to lengthen RMP -> smooth segment
\node[box, text width=4.55cm, minimum height=1.18cm] (smooth) at (5.92,2.30)
{$\pi_t^{\mathrm{sep}}=$\\
$\theta_t \pi_t^{\mathrm{ref}} + (1-\theta_t)\pi_t^{\mathrm{out}}$};

% Left route: pricing -> longer left extension -> up -> longer right extension -> arrow into RMP
\coordinate (L1) at (0.10,0.00);
\coordinate (L2) at (0.10,2.30);
\coordinate (L3) at (0.45,2.30);

\draw[plain] (pricing.west) -- (L1) -- (L2) -- (L3);
\draw[flow]  (L3) -- (rmp.west);

\node[align=right, anchor=east] at (-0.05,1.15) {New\\column};

% Internal smoothing flow (lengthened)
\draw[flow] (rmp.east) -- (smooth.west);

\node[above] at (2.98,2.44) {$\pi_t^{\mathrm{out}}$};

% Output route: right -> down -> left -> arrow into pricing
\coordinate (R1) at (8.78,2.30);
\coordinate (R2) at (8.78,0.00);
\coordinate (R3) at (6.10,0.00);

\draw[plain] (smooth.east) -- (R1) -- (R2) -- (R3);
\draw[flow]  (R3) -- (pricing.east);

\node[above] at (8.88,2.44) {$\pi_t^{\mathrm{sep}}$};

\end{tikzpicture}
\caption{Smoothing}
\end{subfigure}
\hfill
% =========================
% (c) Penalization
% =========================
\begin{subfigure}[t]{0.29\textwidth}
\centering
\begin{tikzpicture}[x=1cm,y=1cm,scale=0.62,transform shape]
\path[use as bounding box] (0,-0.75) rectangle (7.55,3.35);

\node[box, minimum width=3.25cm, minimum height=1.00cm] (pricing) at (3.45,0.00)
{\textbf{Pricing Problem}};

\node[box, text width=3.30cm, minimum height=1.15cm] (pen) at (3.32,2.30)
{Stabilized $RMP_t$\\
$(\pi_t^{\mathrm{ref}},\theta_t)$};

% Left route: pricing -> left -> up -> arrow into stabilized RMP from outside left
\coordinate (L1) at (0.78,0.00);
\coordinate (L2) at (0.78,2.30);
\coordinate (L3) at (1.02,2.30);

\draw[plain] (pricing.west) -- (L1) -- (L2) -- (L3);
\draw[flow]  (L3) -- (pen.west);

\node[align=right, anchor=east] at (0.58,1.15) {New\\column};

% Output route with room for pi_sep
\coordinate (R1) at (6.38,2.30);
\coordinate (R2) at (6.38,0.00);
\coordinate (R3) at (5.72,0.00);

\draw[plain] (pen.east) -- (R1) -- (R2) -- (R3);
\draw[flow]  (R3) -- (pricing.east);

\node[above] at (5.78,2.46) {$\pi_t^{\mathrm{sep}}$};

\end{tikzpicture}
\caption{Penalization}
\end{subfigure}

% \caption{Comparison of the dual information flow per iteration under standard column generation and two stabilization methods. (a) Standard CG passes the optimal dual solution $\pi_t^{\mathrm{out}}$ from the RMP directly to the pricing problem. (b) Smoothing explicitly computes a convex combination of the RMP's optimal dual $\pi_t^{\mathrm{out}}$ and a reference dual point $\pi_t^{\mathrm{ref}}$ to yield the stabilized dual $\pi_t^{\mathrm{sep}}$. (c) Penalization implicitly generates the stabilized dual $\pi_t^{\mathrm{sep}}$ by directly solving a modified RMP augmented with a penalty function centered at $\pi_t^{\mathrm{ref}}$.}
% \label{fig:stabilization_framework}
% \caption{Dual information flow under standard column generation and two stabilization methods. (a) Standard CG sends $\pi_t^{\mathrm{out}}$ from the RMP directly to pricing. (b) Smoothing forms $\pi_t^{\mathrm{sep}}$ from $\pi_t^{\mathrm{out}}$ and $\pi_t^{\mathrm{ref}}$. (c) Penalization obtains $\pi_t^{\mathrm{sep}}$ by solving a stabilized $RMP_t(\pi_t^{\mathrm{ref}},\theta_t)$.}
\caption{Dual information flow under standard column generation and two stabilization methods.}
\label{fig:stabilization_framework}

\endgroup
\end{figure}
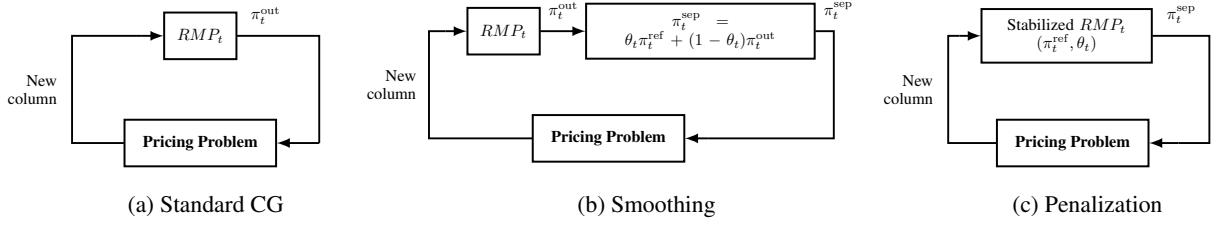

\smallskip
\noindent\textbf{Smoothing.}
Choose the reference dual point $\pi_t^{\mathrm{ref}}=\pi_t^{\mathrm{in}}$ %(e.g. historical or predicted in-points), 
and let $\theta_t := \{\alpha_t\}$ denote the smoothing weight 
with $\alpha_t\in[0,1)$. The stabilization operator is defined 
\textit{explicitly} and returns the stabilized dual as
% \vspace{-0.4em}
\begin{equation}\label{eq:f_smooth}
\pi_t^{\mathrm{sep}}
= f_t^{\mathrm{smooth}}(\pi_t^{\mathrm{ref}},\theta_t; RMP_t)
:= \alpha_t \pi_t^{\mathrm{ref}} + (1-\alpha_t)\pi_t^{\mathrm{out}}.
\end{equation}
% where $\pi_t^{\mathrm{out}}$ denotes an optimal dual solution of the current RMP.
\smallskip
\noindent\textbf{Penalization.} Assume a single-point penalty box, and
% Let $\delta_t^-=\delta_t^+=\pi_t^{\mathrm{ref}}$, so that the 
% penalty box is centered at $\pi_t^{\mathrm{ref}}$, and define 
% the stabilization parameters as 
% $\theta_t:=\{\varepsilon_t^-,\varepsilon_t^+\}$ with 
% $\varepsilon_t^-,\varepsilon_t^+\ge 0$. The penalized dual vector 
% $\pi_t^{\mathrm{pen}}$ is obtained as the $\pi$-component of an 
% optimal solution to
let $\delta_t^-=\delta_t^+=\pi_t^{\mathrm{ref}}$ and $\theta_t:=\{\varepsilon_t^-,\varepsilon_t^+\}$ with $\varepsilon_t^-,\varepsilon_t^+\ge 0$. Let $\pi_t^{\mathrm{pen}}$ denote the $\pi$-component of an optimal solution to

\begin{equation}\label{eq:f_pen1}
\begin{aligned}
\max_{\pi\in\mathbb{R}^m,\, w^-\in\mathbb{R}^m_+,\, w^+\in\mathbb{R}^m_+}\quad
& b^\top\pi - (\varepsilon_t^-)^\top w^- - (\varepsilon_t^+)^\top w^+ \\
\text{s.t.}\quad
& A_R^\top\pi \le c_R,\;
  \pi - w^+ \le \pi_t^{\mathrm{ref}},\;
  -\pi - w^- \le -\pi_t^{\mathrm{ref}}.
\end{aligned}
\end{equation}
The stabilized dual is then: $\pi_t^{\mathrm{sep}} 
= f_t^{\mathrm{pen}}(\pi_t^{\mathrm{ref}},\theta_t; RMP_t) 
:= \pi_t^{\mathrm{pen}}.$

While straightforward, for completeness we establish that both methods reduce to standard column generation under specific conditions.

% \begin{lemma}
% \label{lem:neutral}
% For both stabilization methods, there exist specific conditions on parameters or reference points that recover the standard column generation trajectory:
% \begin{enumerate}
%     \item For smoothing: when $\alpha_t = 0$, we have $f_t^{\mathrm{smooth}}(\pi_t^{\mathrm{ref}}, 0;RMP_t) = \pi_t^{\mathrm{out}}$; or when $\pi_t^{\mathrm{ref}} = \pi_t^{\mathrm{out}}$, we have $f_t^{\mathrm{smooth}}(\pi_t^{\mathrm{out}},\alpha_t;RMP_t) = \pi_t^{\mathrm{out}}$ for any $\alpha_t \in [0,1)$.
%     \item For penalization in ~\eqref{eq:f_pen1}, if either $\varepsilon_t^-=0$ and $\varepsilon_t^+=0$,
%     or $\pi_t^{\mathrm{ref}}$ is an optimal dual solution of the RMP,
%     then the stabilized dual returned by $f_t^{\mathrm{pen}}(\pi_t^{\mathrm{ref}},\theta_t;RMP_t)$ is an optimal dual solution of the RMP.
% \end{enumerate}
% \end{lemma}

\begin{lemma}
\label{lem:neutral}
For both stabilization methods, standard column generation is recovered when the stabilization strength is zero. Beyond this common case:
\begin{enumerate}
    \item For smoothing, if $\pi_t^{\mathrm{ref}}=\pi_t^{\mathrm{out}}$, then
    $
    f_t^{\mathrm{smooth}}(\pi_t^{\mathrm{out}},\alpha_t;RMP_t)=\pi_t^{\mathrm{out}}
    $
    for any $\alpha_t\in[0,1)$.
    \item For penalization in \eqref{eq:f_pen1}, if $\pi_t^{\mathrm{ref}}$ is an optimal dual solution of the RMP, then
    $
    f_t^{\mathrm{pen}}(\pi_t^{\mathrm{ref}},\theta_t;RMP_t)
    $
    returns an optimal dual solution of the RMP.
\end{enumerate}
\end{lemma}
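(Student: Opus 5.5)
The plan is to treat the zero-strength case and the two method-specific cases separately, each by direct substitution into the definitions \eqref{eq:f_smooth} and \eqref{eq:f_pen1}. For smoothing, zero strength means $\alpha_t=0$, and \eqref{eq:f_smooth} immediately gives $\pi_t^{\mathrm{sep}}=\pi_t^{\mathrm{out}}$. Item 1 follows in the same way: substituting $\pi_t^{\mathrm{ref}}=\pi_t^{\mathrm{out}}$ gives $\alpha_t\pi_t^{\mathrm{out}}+(1-\alpha_t)\pi_t^{\mathrm{out}}=\pi_t^{\mathrm{out}}$ for every $\alpha_t\in[0,1)$. Since pricing then uses exactly the dual that standard CG uses, the iterates coincide.

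For penalization with zero strength, $\varepsilon_t^-=\varepsilon_t^+=0$, the variables $w^\pm$ drop out of the objective of \eqref{eq:f_pen1}. For any $\pi$ we may choose $w^+=(\pi-\pi_t^{\mathrm{ref}})_+$ and $w^-=(\pi_t^{\mathrm{ref}}-\pi)_+$ to satisfy the remaining constraints. Hence the projection of the feasible set onto $\pi$ is exactly $\{\pi: A_R^\top\pi\le c_R\}$, and the objective is $b^\top\pi$. Problem \eqref{eq:f_pen1} therefore reduces to the dual of the RMP, so its $\pi$-component is an optimal RMP dual.

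For item 2, let $\pi^\ast:=\pi_t^{\mathrm{ref}}$ be RMP-dual optimal with value $z_R=b^\top\pi^\ast$. The argument proceeds in two steps.
\begin{enumerate}
\item \emph{Lower bound.} The point $(\pi^\ast,0,0)$ is feasible for \eqref{eq:f_pen1} and attains value $z_R$, so the optimal value of \eqref{eq:f_pen1} is at least $z_R$.
\item \emph{Upper bound and squeeze.} For any feasible $(\pi,w^-,w^+)$, we have $b^\top\pi\le z_R$ because $\pi$ is RMP-dual feasible, and the penalty $(\varepsilon_t^-)^\top w^-+(\varepsilon_t^+)^\top w^+\ge0$ because $\varepsilon_t^\pm\ge0$ and $w^\pm\ge0$. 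Hence the objective is at most $z_R$, and the optimal value of \eqref{eq:f_pen1} equals $z_R$. At any optimal solution, $b^\top\pi - \text{penalty} = z_R$ together with $b^\top\pi\le z_R$ and penalty $\ge 0$ forces $b^\top\pi=z_R$ and zero penalty. Thus the returned $\pi_t^{\mathrm{pen}}$ is feasible for the RMP dual with optimal value, i.e., it is an optimal RMP dual.
\end{enumerate}

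The only subtle point is the phrase ``returns an optimal dual solution''. When $\varepsilon_t^\pm>0$ componentwise, the optimal $\pi$ need not equal $\pi_t^{\mathrm{ref}}$, since another RMP-optimal dual with zero penalty could exist; we only need it to be \emph{some} RMP-optimal dual, and the squeeze argument above yields exactly that. The same squeeze handles components where $\varepsilon$ is zero, so no nondegeneracy assumption is required. I would note explicitly that this claim is slightly weaker than recovering the identical trajectory when the RMP dual is nonunique, in line with the lemma's wording.
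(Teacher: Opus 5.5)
Your proof is correct and takes essentially the same route as the paper: for zero strength you set $w^\pm$ to the positive parts of $\pm(\pi-\pi_t^{\mathrm{ref}})$ to reduce the penalized problem to the RMP dual, and for item 2 you bound the penalized objective above by $b^\top\pi\le b^\top\pi_t^{\mathrm{ref}}$ and observe that this bound is attained at $(\pi_t^{\mathrm{ref}},0,0)$. The only differences are minor: you work directly in the $(\pi,w^-,w^+)$ variables instead of first eliminating $w^\pm$, and you make the final ``every maximizer is RMP-optimal'' squeeze explicit. One correction to your closing remark, which is backwards as stated: if $\varepsilon_t^\pm>0$ componentwise, zero penalty forces $w^\pm=0$, and the two box constraints then give $\pi=\pi_t^{\mathrm{ref}}$ exactly, so a different RMP-optimal dual can be returned only through components where some $\varepsilon_{t,i}^\pm$ vanishes.
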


% First, setting the stabilization \emph{strength} to a neutral value (e.g., $\alpha_t=0$ for smoothing, or $\varepsilon_t^-=\varepsilon_t^+=0$ for penalization) makes the stabilized dual coincide with the RMP dual optimum. Second, if the reference dual point is already an RMP dual optimum, then applying stabilization has no effect: the operator returns an RMP-optimal dual regardless of the remaining parameters.
% Under restrictive conditions, the two methods can even produce the same stabilized dual, though this clearly does not hold in general; we show in Appendix~\ref{sec:eq-smooth-prox} that the Euclidean-proximal penalization operator \eqref{eq:f_pen3} can be parameterized to coincide with the smoothing operator 
% \eqref{eq:f_smooth}.

Under restrictive conditions, the two methods can produce the same stabilized dual, but this equivalence clearly does not hold in general; see Appendix~\ref{sec:eq-smooth-prox}, where we show that the Euclidean-proximal penalization operator \eqref{eq:f_pen3} can be parameterized to coincide with the smoothing operator \eqref{eq:f_smooth}.

\subsection{Stabilization Dynamics and Convergence}\label{subsec:stabilization parameter}

We now examine the role of the stabilization parameters $\theta_t$ 
in controlling the deviation between $\pi_t^{\mathrm{sep}}$ and $\pi_t^{\mathrm{out}}$, and the implications of this deviation 
for pricing behavior and convergence. To isolate the effect of $\theta_t$, the analysis here treats the reference point 
$\pi_t^{\mathrm{ref}}$ as given; reference point selection is 
discussed in Section~\ref{sec:reference point}.

\subsubsection{Stabilization Dynamics}

% We begin by studying how the stabilization parameters influence the deviation between the \textit{stabilized} dual solution $\pi_t^{\mathrm{sep}}$ and the \textit{current} RMP dual solution $\pi_t^{\mathrm{out}}$, and how this deviation affects mispricing.

% To formalize the connection between dual deviations and pricing distortion,
% % let $Q_{NB} \subset Q$ denote the set of all \emph{non-basic} columns at iteration $t$. 
% we partition this set according to the reduced costs with respect to the current RMP dual solution $\pi_t^{\mathrm{out}}$:
% % \[
% % \mathcal{Q}_-(\pi_t^{\mathrm{out}}):=\{q \in Q_{NB}:\ r_q(\pi_t^{\mathrm{out}})<0\},
% % \qquad
% % \mathcal{Q}_+(\pi_t^{\mathrm{out}}):=\{q \in Q_{NB}:\ r_q(\pi_t^{\mathrm{out}})\ge 0\}.
% % \]
% \[
% \mathcal{Q}_-(\pi_t^{\mathrm{out}}):=\{q \in Q:\ r_q(\pi_t^{\mathrm{out}})<0\},
% \qquad
% \mathcal{Q}_+(\pi_t^{\mathrm{out}}):=\{q \in Q:\ r_q(\pi_t^{\mathrm{out}})\ge 0\}.
% \]
% % If the current RMP solution is not optimal for the master problem, then $\mathcal{Q}_-(\pi_t^{\mathrm{out}})\neq\varnothing$. 
% We use two quantities to characterize the reduced-cost separation around zero: the most negative reduced cost, $\gamma_t^-$, and the smallest nonnegative reduced cost, $\gamma_t^+$,
% \[
% \gamma_t^- := -\min_{q\in\mathcal Q_-(\pi_t^{\mathrm{out}})} r_q(\pi_t^{\mathrm{out}}),
% \qquad 
% \gamma_t^+ := \min_{q\in\mathcal{Q}_+(\pi_t^{\mathrm{out}})} r_q(\pi_t^{\mathrm{out}}).
% \]
% We define $\gamma_t^+=+\infty$ if $\mathcal{Q}_+(\pi_t^{\mathrm{out}})=\varnothing$.

To formalize the connection between dual deviations and pricing distortion, we partition the column set according to the reduced costs with respect to the current RMP dual solution \(\pi_t^{\mathrm{out}}\): 
\(\mathcal{Q}_-(\pi_t^{\mathrm{out}}):=\{q \in Q:\ r_q(\pi_t^{\mathrm{out}})<0\}\), and \(\mathcal{Q}_+(\pi_t^{\mathrm{out}}):=\{q \in Q:\ r_q(\pi_t^{\mathrm{out}})\ge 0\}\). Define \(\gamma_t^- := -\min_{q\in\mathcal Q_-(\pi_t^{\mathrm{out}})} r_q(\pi_t^{\mathrm{out}})\), and \(\gamma_t^+ := \min_{q\in\mathcal{Q}_+(\pi_t^{\mathrm{out}})} r_q(\pi_t^{\mathrm{out}})\), with $\gamma_t^+=+\infty$ if $\mathcal{Q}_+(\pi_t^{\mathrm{out}})=\varnothing$.
We consider iterations with $\mathcal Q_-(\pi_t^{\mathrm{out}})\neq\varnothing$ and assume that, for every dual vector $\pi$, the pricing problem admits an optimal solution.

% Under these conditions, the following proposition establishes that if the stabilized dual $\pi_t^{\mathrm{sep}}$ remains sufficiently close to $\pi_t^{\mathrm{out}}$, the pricing oracle is guaranteed to return a column that separates $\pi_t^{\mathrm{out}}$.

\begin{proposition}[Local consistency of stabilized pricing]
\label{prop:local-consistency-nos-stall}
Assume the columns are uniformly bounded, i.e., \(\bar a := \sup_{q\in Q}\|A_q\|_2 < \infty\).
% \[
% \bar a := \sup_{q\in Q}\|A_q\|_2 < \infty.
% \]
If the stabilized dual $\pi_t^{\mathrm{sep}}$ satisfies
\begin{equation}\label{eq:dist_bound}
\big\|\pi_t^{\mathrm{sep}}-\pi_t^{\mathrm{out}}\big\|_2\ \le\ 
\frac{\min\{\gamma_t^-,\gamma_t^+\}}{2\,\bar a},
\end{equation}
then 
% the column returned by the pricing oracle satisfies
$r_{q(\pi_t^{\mathrm{sep}})}(\pi_t^{\mathrm{out}})<0$,
and mispricing does not occur.
\end{proposition}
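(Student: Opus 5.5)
The argument is a Lipschitz perturbation bound on reduced costs, followed by a comparison between the minimizer at $\pi_t^{\mathrm{out}}$ and any column that is nonnegative there.

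\textbf{Step 1 (Lipschitz bound).} Set $\Delta := \pi_t^{\mathrm{sep}}-\pi_t^{\mathrm{out}}$. For every $q\in Q$,
\[
r_q(\pi_t^{\mathrm{sep}}) = r_q(\pi_t^{\mathrm{out}}) - A_q^\top \Delta .
\]
By Cauchy--Schwarz and uniform boundedness of the columns, $|A_q^\top\Delta|\le \bar a\|\Delta\|_2$. Hypothesis \eqref{eq:dist_bound} then gives
\[
\bigl|r_q(\pi_t^{\mathrm{sep}})-r_q(\pi_t^{\mathrm{out}})\bigr|\le \tfrac12\min\{\gamma_t^-,\gamma_t^+\}=:\tfrac12\gamma
\]
for all $q$ simultaneously. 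If $\bar a=0$, all reduced costs are unchanged and the claim is trivial, so we may assume $\bar a>0$.

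\textbf{Step 2 (the separating column stays attractive).} Since $\mathcal Q_-(\pi_t^{\mathrm{out}})\neq\varnothing$ and pricing admits an optimum, pick $q^-$ attaining $r_{q^-}(\pi_t^{\mathrm{out}})=-\gamma_t^-$. By Step 1,
\[
r_{q^-}(\pi_t^{\mathrm{sep}})\le -\gamma_t^-+\tfrac12\gamma .
\]

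\textbf{Step 3 (nonnegative columns stay unattractive).} For any $q\in\mathcal Q_+(\pi_t^{\mathrm{out}})$, Step 1 gives
\[
r_q(\pi_t^{\mathrm{sep}})\ge r_q(\pi_t^{\mathrm{out}})-\tfrac12\gamma\ge \gamma_t^+-\tfrac12\gamma .
\]

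\textbf{Step 4 (compare and conclude).} Since $\gamma\le\gamma_t^-$ and $\gamma\le\gamma_t^+$, we have $-\gamma_t^-+\tfrac12\gamma\le -\tfrac12\gamma$ and $\gamma_t^+-\tfrac12\gamma\ge \tfrac12\gamma$. Hence every $q\in\mathcal Q_+$ satisfies
\[
r_q(\pi_t^{\mathrm{sep}})\ \ge\ \tfrac12\gamma\ >\ -\tfrac12\gamma\ \ge\ r_{q^-}(\pi_t^{\mathrm{sep}}).
\]
The inequality is strict provided $\gamma>0$. This holds because $\gamma_t^->0$ by the definition of $\mathcal Q_-$, and $\gamma_t^+\ge 0$.

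The main obstacle is the degenerate case $\gamma_t^+=0$, where $\gamma=0$ and the chain above is not strict. In that case $\Delta=0$ by \eqref{eq:dist_bound}, so $\pi_t^{\mathrm{sep}}=\pi_t^{\mathrm{out}}$. The oracle then returns a minimizer at $\pi_t^{\mathrm{out}}$, whose reduced cost is $-\gamma_t^-<0$.

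In the case $\gamma>0$, every minimizer of $r_\cdot(\pi_t^{\mathrm{sep}})$ lies outside $\mathcal Q_+(\pi_t^{\mathrm{out}})$. So the returned column $q(\pi_t^{\mathrm{sep}})$ lies in $\mathcal Q_-(\pi_t^{\mathrm{out}})$, which means $r_{q(\pi_t^{\mathrm{sep}})}(\pi_t^{\mathrm{out}})<0$, and by Definition~\ref{def:mispricing} no mispricing occurs. This conclusion needs only the existence of a minimizer, not a particular tie-breaking rule.
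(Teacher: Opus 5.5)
Your proof is correct and follows essentially the same route as the paper's: the Lipschitz bound $|r_q(\pi_t^{\mathrm{sep}})-r_q(\pi_t^{\mathrm{out}})|\le \bar a\,\|\pi_t^{\mathrm{sep}}-\pi_t^{\mathrm{out}}\|_2$, then showing the most negative column at $\pi_t^{\mathrm{out}}$ stays negative at $\pi_t^{\mathrm{sep}}$ and every column of $\mathcal Q_+(\pi_t^{\mathrm{out}})$ stays nonnegative. The only difference is cosmetic: the paper compares both sides against zero, $r_{q^-}(\pi_t^{\mathrm{sep}})\le -\gamma_t^-/2<0\le \gamma_t^+/2$, so strictness comes from $\gamma_t^->0$ alone, which makes your separate $\gamma_t^+=0$ case unnecessary; it also sidesteps your sentence asserting that $\gamma>0$ follows from $\gamma_t^->0$ and $\gamma_t^+\ge 0$, which it does not, though you do handle that exception correctly right afterwards.
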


%Proposition~\ref{prop:local-consistency-nos-stall} shows that the room for dual perturbation is characterized by the separation margin $\min\{\gamma_t^-,\gamma_t^+\}$, which measures how far the reduced costs are from zero on both sides --- the margin by which the best column improves the RMP, and the margin by which the nearest non-improving column fails to. Although the condition is only sufficient, it offers useful intuition: under degeneracy, both sides shrink, and controlling the distortion induced by stabilization becomes substantially harder. We next map this condition into explicit bounds for each stabilization method.

% Proposition~\ref{prop:local-consistency-nos-stall} shows that if $\pi_t^{\mathrm{sep}}$ remains sufficiently close to $\pi_t^{\mathrm{out}}$, pricing still returns a column that separates $\pi_t^{\mathrm{out}}$. identifies the 
% separation margin $\min\{\gamma_t^-,\gamma_t^+\}$ as the key 
% quantity governing the safe perturbation radius.
Proposition~\ref{prop:local-consistency-nos-stall} shows that if $\pi_t^{\mathrm{sep}}$ remains sufficiently close to $\pi_t^{\mathrm{out}}$, pricing still returns a column that separates $\pi_t^{\mathrm{out}}$. The governing quantity is the separation margin $\min{\gamma_t^-,\gamma_t^+}$: within this radius, pricing may differ from standard pricing but remains in the weak-distortion regime; beyond it, the selected column may enter the strong-distortion regime.

Applying Proposition~\ref{prop:local-consistency-nos-stall} to the smoothing operator \eqref{eq:f_smooth} yields the following corollary.

\begin{corollary}[Safe smoothing bound]
\label{prop:safe-bound-smoothing}
In the smoothing method \eqref{eq:f_smooth}, if either $\pi_t^{\mathrm{ref}}=\pi_t^{\mathrm{out}}$ or \(\alpha_t \le \frac{\min\{\gamma_t^-,\gamma_t^+\}}{2\,\bar a\,\|\pi_t^{\mathrm{ref}}-\pi_t^{\mathrm{out}}\|_2}\),
% \[
% \alpha_t\ \le\ \frac{\min\{\gamma_t^-,\gamma_t^+\}}{2\,\bar a\,\|\pi_t^{\mathrm{ref}}-\pi_t^{\mathrm{out}}\|_2},
% \]
then 
% every pricing minimizer $q(\pi_t^{\mathrm{sep}})$ belongs to $\mathcal{Q}_-(\pi_t^{\mathrm{out}})$.  Consequently, 
$r_{q(\pi_t^{\mathrm{sep}})}(\pi_t^{\mathrm{out}})<0$, and mispricing is prevented.
\end{corollary}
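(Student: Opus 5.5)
The corollary follows by feeding the explicit form of the smoothing operator \eqref{eq:f_smooth} into Proposition~\ref{prop:local-consistency-nos-stall}. The first step is to compute the deviation of the stabilized dual from the RMP dual:
\[
\pi_t^{\mathrm{sep}}-\pi_t^{\mathrm{out}}
=\alpha_t\pi_t^{\mathrm{ref}}+(1-\alpha_t)\pi_t^{\mathrm{out}}-\pi_t^{\mathrm{out}}
=\alpha_t\bigl(\pi_t^{\mathrm{ref}}-\pi_t^{\mathrm{out}}\bigr),
\]
so that $\|\pi_t^{\mathrm{sep}}-\pi_t^{\mathrm{out}}\|_2=\alpha_t\|\pi_t^{\mathrm{ref}}-\pi_t^{\mathrm{out}}\|_2$, using $\alpha_t\ge 0$.

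The argument then splits into two cases.

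\emph{Case $\pi_t^{\mathrm{ref}}=\pi_t^{\mathrm{out}}$.} By Lemma~\ref{lem:neutral}, or directly from the identity above, $\pi_t^{\mathrm{sep}}=\pi_t^{\mathrm{out}}$. The deviation is therefore zero, and \eqref{eq:dist_bound} holds trivially. Its right-hand side is positive: $\gamma_t^->0$ because $\mathcal Q_-(\pi_t^{\mathrm{out}})\neq\varnothing$, and $\gamma_t^+\ge 0$ (possibly $+\infty$). Even without invoking the proposition, pricing at $\pi_t^{\mathrm{out}}$ returns a minimizer of $r_q(\pi_t^{\mathrm{out}})$, whose value is $-\gamma_t^-<0$.

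\emph{Case $\pi_t^{\mathrm{ref}}\neq\pi_t^{\mathrm{out}}$.} The norm $\|\pi_t^{\mathrm{ref}}-\pi_t^{\mathrm{out}}\|_2$ is strictly positive, so the stated bound on $\alpha_t$ is well defined. Multiplying that bound by this norm gives
\[
\alpha_t\|\pi_t^{\mathrm{ref}}-\pi_t^{\mathrm{out}}\|_2\le \frac{\min\{\gamma_t^-,\gamma_t^+\}}{2\bar a},
\]
which is exactly \eqref{eq:dist_bound}. Proposition~\ref{prop:local-consistency-nos-stall} then yields $r_{q(\pi_t^{\mathrm{sep}})}(\pi_t^{\mathrm{out}})<0$, so no mispricing occurs.

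There is no substantive obstacle. The only care needed is with degenerate quantities:
\begin{itemize}
\item the case $\pi_t^{\mathrm{ref}}=\pi_t^{\mathrm{out}}$, which would divide by zero and is why it is stated separately;
\item the convention $\gamma_t^+=+\infty$, under which the minimum equals $\gamma_t^-$;
\item the possibility $\gamma_t^+=0$, which forces $\alpha_t=0$ and again reduces to standard pricing.
\end{itemize}
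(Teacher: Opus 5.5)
Your proposal is correct and matches the paper's proof: compute $\|\pi_t^{\mathrm{sep}}-\pi_t^{\mathrm{out}}\|_2=\alpha_t\|\pi_t^{\mathrm{ref}}-\pi_t^{\mathrm{out}}\|_2$ and substitute it into the hypothesis \eqref{eq:dist_bound} of Proposition~\ref{prop:local-consistency-nos-stall}. Your explicit treatment of the case $\pi_t^{\mathrm{ref}}=\pi_t^{\mathrm{out}}$, which the paper's proof leaves implicit, is a welcome addition; one small slip is that the right-hand side of \eqref{eq:dist_bound} is only nonnegative rather than positive when $\gamma_t^+=0$, but since the deviation is zero in that case, nothing changes.
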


Unlike smoothing, under penalization the dependence of $\pi_t^{\mathrm{sep}}$ on the penalty parameter is implicit. To relate the penalty magnitude to the induced dual deviation, we use a standard polyhedral error bound. Let $D_R := \{\pi \in \mathbb{R}^m : A_R^\top \pi \le c_R\}$ and let $M_t \subseteq D_R$ denote the set of optimal dual solutions. Then there exists $\lambda_t > 0$ such that
\(
b^\top \pi_t^{\mathrm{out}} - b^\top \pi \ge \lambda_t\,\mathrm{dist}(\pi,M_t), \ \forall \pi \in D_R
\)
\citep[Theorem~3.5]{burke1993weak}. When the dual optimum is unique, this reduces to
\begin{equation}\label{eq:error_bound_unique}
b^\top \pi_t^{\mathrm{out}} - b^\top \pi \ge \lambda_t \|\pi - \pi_t^{\mathrm{out}}\|_2.
\end{equation}
% \(
% b^\top \pi_t^{\mathrm{out}} - b^\top \pi \ge \lambda_t \|\pi - \pi_t^{\mathrm{out}}\|_2.
% \)
Since $\pi_t^{\mathrm{sep}}$ maximizes the penalized RMP, the penalty bounds its optimality gap and hence $\|\pi_t^{\mathrm{sep}} - \pi_t^{\mathrm{out}}\|_2$.
% Because $\pi_t^{\mathrm{sep}}$ maximizes the penalized RMP, the penalty term bounds the objective loss $b^\top \pi_t^{\mathrm{out}} - b^\top \pi_t^{\mathrm{sep}}$; the above error bound then converts this loss into a bound on $\|\pi_t^{\mathrm{sep}} - \pi_t^{\mathrm{out}}\|_2$. 
Combining this with Proposition~\ref{prop:local-consistency-nos-stall} yields the following bounds.

\begin{proposition}[Safe penalization bound]
\label{prop:safe-bound-penalization}
Assume the RMP dual optimum $\pi_t^{\mathrm{out}}$ is unique. 
% For the penalization operator~\eqref{eq:f_pen1}, let $\varepsilon_\infty = \max_i \{\varepsilon_{t,i}^-, \varepsilon_{t,i}^+\}$. 
For the penalization operator~\eqref{eq:f_pen1}, if $\pi_t^{\mathrm{ref}}=\pi_t^{\mathrm{out}}$, then the operator is neutral and mispricing does not occur. Otherwise, let $\varepsilon_\infty = \max_i \{\varepsilon_{t,i}^-, 
\varepsilon_{t,i}^+\}$,
if \(
\varepsilon_\infty \ \le\ \frac{\lambda_t}{\sqrt{m}\,\|\pi_t^{\mathrm{out}}-\pi_t^{\mathrm{ref}}\|_2} 
\cdot \frac{\min\{\gamma_t^-,\gamma_t^+\}}{2\,\bar a},
\)
% \[
% \varepsilon_\infty \ \le\ \frac{\lambda_t}{\sqrt{m}\,\|\pi_t^{\mathrm{out}}-\pi_t^{\mathrm{ref}}\|_2} 
% \cdot \frac{\min\{\gamma_t^-,\gamma_t^+\}}{2\,\bar a},
% \]
then $r_{q(\pi_t^{\mathrm{sep}})}(\pi_t^{\mathrm{out}})<0$ and mispricing is prevented.
\end{proposition}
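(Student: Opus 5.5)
The plan is to reduce the statement to Proposition~\ref{prop:local-consistency-nos-stall}: show that the penalized dual $\pi_t^{\mathrm{sep}}=\pi_t^{\mathrm{pen}}$ stays within distance $\min\{\gamma_t^-,\gamma_t^+\}/(2\bar a)$ of $\pi_t^{\mathrm{out}}$. The neutral case $\pi_t^{\mathrm{ref}}=\pi_t^{\mathrm{out}}$ is handled by Lemma~\ref{lem:neutral}. Since the RMP dual optimum is unique, the returned point equals $\pi_t^{\mathrm{out}}$, so there is no distortion. Pricing at $\pi_t^{\mathrm{out}}$ then returns a column with $r_q(\pi_t^{\mathrm{out}})=\sigma(\pi_t^{\mathrm{out}})<0$, because we work in iterations with $\mathcal Q_-(\pi_t^{\mathrm{out}})\neq\varnothing$.

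For the general case, first eliminate $w^\pm$ in \eqref{eq:f_pen1}. At optimality, $w^+_i=\max\{\pi_i-\pi^{\mathrm{ref}}_{t,i},0\}$ and $w^-_i=\max\{\pi^{\mathrm{ref}}_{t,i}-\pi_i,0\}$. The problem is therefore $\max_{\pi\in D_R}\, b^\top\pi-\Phi(\pi)$ with $\Phi(\pi)=\sum_i \varepsilon_{t,i}^{\pm}|\pi_i-\pi^{\mathrm{ref}}_{t,i}|$, and $0\le \Phi(\pi)\le \varepsilon_\infty\|\pi-\pi_t^{\mathrm{ref}}\|_1\le \varepsilon_\infty\sqrt m\,\|\pi-\pi_t^{\mathrm{ref}}\|_2$.

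Next compare the optimal value at $\pi_t^{\mathrm{sep}}$ with the feasible candidate $\pi_t^{\mathrm{out}}\in D_R$. Optimality gives $b^\top\pi_t^{\mathrm{sep}}-\Phi(\pi_t^{\mathrm{sep}})\ge b^\top\pi_t^{\mathrm{out}}-\Phi(\pi_t^{\mathrm{out}})$. Since $\Phi(\pi_t^{\mathrm{sep}})\ge0$, this yields
\[
b^\top\pi_t^{\mathrm{out}}-b^\top\pi_t^{\mathrm{sep}}\le \Phi(\pi_t^{\mathrm{out}})\le \varepsilon_\infty\sqrt m\,\|\pi_t^{\mathrm{out}}-\pi_t^{\mathrm{ref}}\|_2 .
\]
Because $\pi_t^{\mathrm{sep}}\in D_R$, the error bound \eqref{eq:error_bound_unique} applies and gives
\[
\|\pi_t^{\mathrm{sep}}-\pi_t^{\mathrm{out}}\|_2\le \varepsilon_\infty\sqrt m\,\|\pi_t^{\mathrm{out}}-\pi_t^{\mathrm{ref}}\|_2/\lambda_t .
\]
Substituting the assumed bound on $\varepsilon_\infty$ makes the right-hand side at most $\min\{\gamma_t^-,\gamma_t^+\}/(2\bar a)$. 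This is exactly \eqref{eq:dist_bound}, so Proposition~\ref{prop:local-consistency-nos-stall} yields $r_{q(\pi_t^{\mathrm{sep}})}(\pi_t^{\mathrm{out}})<0$.

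The main obstacle is the elimination step. We must check that $\pi_t^{\mathrm{sep}}$ is taken as the $\pi$-component of an optimal solution, and that $\Phi$ is bounded by $\varepsilon_\infty$ times the $\ell_1$ distance, which uses $\varepsilon^\pm\ge0$ and the fact that only one of $w^\pm_i$ is active. The $\sqrt m$ factor comes from the $\ell_1$-to-$\ell_2$ conversion. We also need the penalized problem to attain its maximum. This holds because the objective is bounded above by the RMP optimum, and a bounded LP with nonempty feasible region attains its optimum.
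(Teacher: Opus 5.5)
Your proposal is correct and follows the paper's proof essentially step for step. You eliminate $w^\pm$ to write the operator as maximizing $b^\top\pi - P_t(\pi)$ over $D_R$, compare against the feasible point $\pi_t^{\mathrm{out}}$ to get $b^\top\pi_t^{\mathrm{out}}-b^\top\pi_t^{\mathrm{sep}}\le P_t(\pi_t^{\mathrm{out}})\le \varepsilon_\infty\sqrt m\,\|\pi_t^{\mathrm{out}}-\pi_t^{\mathrm{ref}}\|_2$, convert this via the error bound \eqref{eq:error_bound_unique}, and invoke Proposition~\ref{prop:local-consistency-nos-stall}, exactly as the paper does; your extra remarks on uniqueness in the neutral case and on attainment of the LP optimum just make explicit what the paper leaves implicit.
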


Although theoretically informative, these bounds are not enforced in our algorithmic framework for two reasons. First, evaluating them is computationally expensive: it requires global reduced-cost information to determine $\gamma_t^-$ and $\gamma_t^+$, and, for penalization, solving an additional RMP to recover $\pi_t^{\mathrm{out}}$ at each iteration. Second, and more importantly, strictly remaining in the safe zone suppresses dual exploration and may exclude columns that accelerate long-run convergence. This intractability motivates the adaptive learning strategy proposed in Section~\ref{sec:method}.
At the same time, this analysis informs the learning-based design by indicating when stabilization becomes overly aggressive. In particular, if pricing repeatedly returns columns with strongly negative reduced costs but yields little objective improvement, this suggests that the stabilized dual has drifted too far from the RMP dual. These observations motivate mispricing-related features in the RL state (Section~\ref{subsubsec:state}).

\subsubsection{Convergence Guarantee}

% \edit{
We now ask: which stabilization properties ensure finite termination under aggressive pricing distortion?
% } % We now turn to the question of convergence: despite allowing for aggressive pricing distortion, what properties of the stabilization mechanism are sufficient to guarantee finite termination with an optimal solution?
Throughout, we make the following standard assumptions: (A1) the master problem and every RMP are feasible and bounded; (A2) the pricing oracle is exact; and (A3) the set of columns that can be generated is finite.

\begin{lemma}[Finite convergence]
\label{lem:convergence}
Under A1--A3, a stabilized column generation algorithm terminates finitely with an optimal solution if it satisfies the following property: whenever the same RMP persists over a sequence of iterations and that RMP is not yet optimal for the full master problem, the stabilization mechanism eventually reduces the pricing distortion enough that the pricing step returns a column separating the current RMP dual solution.
\end{lemma}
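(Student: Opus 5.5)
The argument is a counting one over the finite column set (A3): the RMP can change only finitely many times, and the stated property prevents it from staying the same forever unless it is already optimal. I will analyze the sequence of distinct RMPs $RMP^{(1)}, RMP^{(2)},\dots$ produced by the algorithm, where a new RMP is formed each time a column not already in $R$ is added. Under (A1) each RMP has a finite optimal value $z_R$ and an optimal dual $\pi^{\mathrm{out}}$.

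First I will show that the number of times the RMP changes is finite. Call an iteration \emph{productive} if the pricing step returns a column $q_t$ with $r_{q_t}(\pi_t^{\mathrm{out}})<0$. Such a column cannot already be in $R$, because every column $j\in R$ satisfies $A_j^\top\pi_t^{\mathrm{out}}\le c_j$, i.e. $r_j(\pi_t^{\mathrm{out}})\ge 0$. So each productive iteration strictly enlarges $R$. Since columns are never removed and the set of generable columns is finite (A3), there are only finitely many productive iterations. The same reasoning applies to iterations that add a new column through mispricing (strong distortion): these also enlarge $R$, so they too occur finitely often in total. Consequently, after some finite iteration $T$, every iteration returns a column that is already in $R$, and the RMP is the same from $T$ onward.

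Next I will use the hypothesis to show this final RMP is optimal for the MP. Suppose it were not, so that $\sigma(\pi^{\mathrm{out}})<0$ for its dual $\pi^{\mathrm{out}}$. The RMP persists over the sequence of iterations after $T$, so the stated property applies: the stabilization mechanism must eventually produce a column separating $\pi^{\mathrm{out}}$. By the previous paragraph, that column is not in $R$, and adding it changes the RMP. This contradicts the fact that the RMP no longer changes after $T$. Hence $\sigma(\pi^{\mathrm{out}})\ge 0$. By the optimality criterion in Section~\ref{sec:prelim}, $\pi^{\mathrm{out}}$ is then feasible for the full dual. Weak duality, together with the equality of the RMP's primal and dual optimal values, shows that the current RMP solution is optimal for the MP.

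It remains to check that termination is detected, which depends on how the stopping rule interacts with stabilization. The main obstacle is that a stabilized pricing value $\sigma(\pi_t^{\mathrm{sep}})\ge 0$ does not by itself certify optimality, since $\pi_t^{\mathrm{sep}}$ may not be the RMP dual. I would handle this by assuming the algorithm stops only when pricing at $\pi_t^{\mathrm{out}}$ is nonnegative. This can happen either directly or because the mechanism has reduced the distortion to zero, as the property guarantees it eventually does; Lemma~\ref{lem:neutral} gives exactly the reduction to standard CG at zero stabilization strength. With that convention, the algorithm cannot stop at a non-optimal RMP, and by the argument above it reaches an optimal one after finitely many changes of the RMP. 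Hence it terminates finitely with an optimal solution.
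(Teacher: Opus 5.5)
The paper does not prove this lemma. It is stated in the main text and followed only by an informal remark, and the appendix proves the other results but not this one. Your counting argument therefore fills a hole rather than paralleling an existing proof. Its core is sound: a column separating $\pi_t^{\mathrm{out}}$ cannot already be in $R$, because $\pi_t^{\mathrm{out}}$ is feasible for the RMP dual. So under A3 and the no-deletion assumption you rightly make explicit, $R$ changes only finitely often, and the hypothesis forbids stalling forever at a non-optimal RMP.

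The gap is in your last two paragraphs. The contradiction shows only that the persisting RMP is optimal \emph{in value}, $z_R=z^\star$. Your step ``hence $\sigma(\pi^{\mathrm{out}})\ge 0$'' needs the converse of what you used. You used ``not optimal $\Rightarrow \sigma(\pi^{\mathrm{out}})<0$'', which is true, but you need ``$\sigma(\pi^{\mathrm{out}})<0 \Rightarrow$ not optimal''. That converse fails under primal degeneracy: a value-optimal RMP can have an optimal dual, possibly the one the solver returns, that violates the dual constraint of a column outside $R$. In that state the hypothesis, read in the value sense, imposes nothing.

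Your final paragraph also overstates the property. It promises a separating column, not vanishing distortion, and only while the RMP is non-optimal. Nothing in the hypothesis excludes a mechanism that freezes $\alpha_t$ and $\pi_t^{\mathrm{ref}}$ once $z_R=z^\star$, at a point where $\sigma(\pi_t^{\mathrm{sep}})\ge 0$ so that no column is added. Such a mechanism satisfies the hypothesis vacuously. Yet your test at $\pi_t^{\mathrm{out}}$ is then never run, or never passes, and the algorithm does not terminate.

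Either of two explicit repairs works, but one must be stated.

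\emph{Option 1: strengthen the hypothesis.} Assume what the remark after the lemma actually describes: whenever the RMP persists, the distortion is eventually driven into the neutral regime of Lemma~\ref{lem:neutral}, regardless of optimality. Pricing then eventually occurs at an RMP-optimal dual. A negative pricing value there would produce a new column, contradicting the choice of $T$. So the standard test is both run and passed, and your last paragraph goes through.

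\emph{Option 2: keep the stated hypothesis.} Read ``not yet optimal'' as $\sigma(\pi_t^{\mathrm{out}})<0$, i.e., the certificate fails. Your third paragraph then legitimately gives $\sigma(\pi_t^{\mathrm{out}})\ge 0$ for all $t\ge T$, including the degenerate case. You must also make the stopping rule two-sided. ``Stops only when the test passes'' gives soundness, but you also need the test to be carried out, e.g.\ in every iteration in which no new column enters $R$. Then the algorithm cannot run past $T$, since at $T$ the test is run and passes. At termination $\pi_t^{\mathrm{out}}$ is feasible for the full dual with $b^\top\pi_t^{\mathrm{out}}=z_R$, so weak duality gives optimality as you argue.
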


Lemma~\ref{lem:convergence} makes explicit the common convergence mechanism behind existing stabilization methods. Global convergence does not strictly require the generation of improving columns at every iteration. Rather, finite termination is guaranteed as long as the algorithm is equipped with a sufficient fallback mechanism that the method eventually decay the dual distortion and be driven back toward the neutral regime characterized in Lemma~\ref{lem:neutral}.

In smoothing, as summarized by \cite[Proposition~2]{pessoa2018automation}, convergence is ensured by keeping $\alpha_t \in [0,1)$ and choosing the reference point from historical dual information so that it remains dual-feasible and yields a valid dual bound no worse than the best one available from previous iterations.
Under these rules, a nonproductive sequence forces the stabilized dual back toward the current RMP dual, so the pricing step eventually recovers a separating column if one exists. 
In penalization, convergence is typically ensured by driving the penalty parameters to zero, either according to a predefined schedule or triggered by successive non-improving iterations. As the penalties vanish, the penalized problem reduces to the standard RMP, so the pricing distortion disappears by Lemma~\ref{lem:neutral}. 
% \edit{We note, however, that \cite{fang2025learning} obtain the stabilized dual directly from a learned policy without forcing the dual deviation to vanish. Thus, in theory, if the learned policy generalizes poorly, it could induce repeated mispricing and prolonged stalling.}
This perspective guides our learning framework, where the policy adapts stabilization parameters while ensuring distortion eventually vanishes, preserving convergence.

\subsection{Reference Point Selection}\label{sec:reference point}

In practice, reference points are usually chosen from historical dual information, e.g., the previous stabilized dual, a past best dual, or a weighted aggregate of past dual iterates.
% \citep{wentges1997weighted,du1999stabilized,neame1999nonsmooth,pessoa2018automation}. 
Such choices are standard in both smoothing and penalization because they anchor stabilization near a region already visited by the algorithm.

Several works also suggest that if \textit{a priori} information about the optimal dual is available, it is a natural candidate for the reference point \citep{du1999stabilized,amor2009choice}. Recent penalization methods \citep{kraul2023machine,shen2024adaptive} follow this idea by using an ML-based prediction $\pi^{\mathrm{pred}}$ as a fixed reference point. Unless the prediction is exact, however, $\pi^{\mathrm{pred}}$ need not be dual-feasible for the full master problem, leading to different behaviors under smoothing and penalization.

For smoothing, if the reference point $\pi^{\mathrm{ref}}$ is fixed and distinct from the current RMP optimum, the stabilization operator does not drive $\pi_t^{\mathrm{sep}}$ toward $\pi_t^{\mathrm{out}}$ unless the smoothing parameter $\alpha_t$ vanishes. Thus, setting $\pi_t^{\mathrm{ref}}=\pi^{\mathrm{pred}}$ alone does not preserve the standard convergence mechanism of \cite[Proposition~2]{pessoa2018automation}. To use learning-based predictions while preserving convergence, we propose a hybrid reference point strategy.
\begin{proposition}\label{prop:smooth-fix-hybrid}
Consider a smoothing method with $\alpha_t\in[0,1)$. Suppose that, for the first $T_0$ iterations, it uses the fixed predicted dual $\pi^{\mathrm{pred}}$ as the reference point, and from iteration $T_0+1$ onward switches to a standard reference selection rule, e.g., $\pi_t^{\mathrm{ref}}=\pi_{t-1}^{\mathrm{sep}}$. Then the algorithm terminates finitely with an optimal solution.
\end{proposition}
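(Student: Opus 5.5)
We split the run into two phases and show that the predicted-dual phase is harmless while the second phase is handled by Lemma~\ref{lem:convergence}.

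\textbf{Phase one: the first $T_0$ iterations.} This phase is finite because $T_0$ is fixed. Each of these iterations performs a well-defined operation. By (A1) every RMP is feasible and bounded, so $\pi_t^{\mathrm{out}}$ exists. The stabilized dual $\pi_t^{\mathrm{sep}}=\alpha_t\pi^{\mathrm{pred}}+(1-\alpha_t)\pi_t^{\mathrm{out}}$ is a well-defined vector. By (A2) the pricing oracle returns a column, or certifies optimality when $\sigma(\pi_t^{\mathrm{out}})\ge 0$ is checked.

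The reference point $\pi^{\mathrm{pred}}$ need not be dual-feasible, so some of these iterations may misprice. Mispricing only adds a column that does not separate $\pi_t^{\mathrm{out}}$. The RMP stays feasible and its objective is nonincreasing. Columns are only ever added, so the column set at iteration $T_0+1$ is a valid RMP, and the state at that point is a legitimate starting state for a standard method. If the method terminates during phase one, it does so only when pricing at $\pi_t^{\mathrm{out}}$ certifies $\sigma\ge 0$, which gives optimality.

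\textbf{Phase two: from iteration $T_0+1$ onward.} From here the method is standard smoothing with $\pi_t^{\mathrm{ref}}=\pi_{t-1}^{\mathrm{sep}}$ and $\alpha_t\in[0,1)$, started from the RMP inherited from phase one. It remains to verify the hypothesis of Lemma~\ref{lem:convergence}: whenever the RMP persists without being optimal, the distortion eventually shrinks enough to return a separating column.

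Suppose the RMP, and hence $\pi^{\mathrm{out}}$, stays unchanged over iterations $t=s,s+1,\dots$ (with a fixed choice of optimal dual). Then
\[
\pi_t^{\mathrm{sep}}-\pi^{\mathrm{out}}=\alpha_t(\pi_{t-1}^{\mathrm{sep}}-\pi^{\mathrm{out}}),
\]
which gives $\|\pi_{t}^{\mathrm{sep}}-\pi^{\mathrm{out}}\|_2=\prod_{k=s}^{t}\alpha_k\,\|\pi_{s-1}^{\mathrm{sep}}-\pi^{\mathrm{out}}\|_2$.

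If $\sup\alpha_k\le\bar\alpha<1$, this product tends to $0$. The deviation then eventually falls below the radius $\min\{\gamma^-,\gamma^+\}/(2\bar a)$ in \eqref{eq:dist_bound}, and Proposition~\ref{prop:local-consistency-nos-stall} yields a column separating $\pi^{\mathrm{out}}$. Here $\gamma^-,\gamma^+$ are positive and fixed while the RMP persists, and $\bar a<\infty$ by (A3). Lemma~\ref{lem:convergence} then gives finite termination with an optimal solution.

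\textbf{Main obstacle.} The hard step is that $\alpha_t\in[0,1)$ alone does not force $\prod\alpha_k\to 0$ if $\alpha_k\to 1$. I would handle this in one of two ways. The first is to assume, as is implicit in the standard rule, that mispricing triggers a reduction of $\alpha$ (as in \cite{pessoa2018automation}), so that $\alpha$ is eventually bounded away from $1$ on a stall. The second is to invoke \cite[Proposition~2]{pessoa2018automation} directly, since phase two is exactly their setting with a valid initial state. In that case the only thing to check is that the inherited reference $\pi_{T_0}^{\mathrm{sep}}$ does not break their argument. Their argument uses the reference only through the contraction identity above, so an arbitrary (possibly infeasible) initial reference is allowed.

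A secondary subtlety is that non-unique $\pi^{\mathrm{out}}$ may vary during a stall. I would fix a deterministic solver choice, so that an unchanged RMP returns the same $\pi^{\mathrm{out}}$.
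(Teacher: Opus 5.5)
Your proposal is correct and follows the paper's proof. The first $T_0$ iterations are finite and leave a valid RMP; after the switch the method is standard smoothing, so \cite[Proposition~2]{pessoa2018automation} supplies the stall-recovery property required by Lemma~\ref{lem:convergence}. Your remark that $\alpha_t\in[0,1)$ alone does not force $\prod_k\alpha_k\to 0$ is a fair caveat that the paper's citation glosses over (in RLSCG(S) the action grid keeps $\alpha_t\le 0.95$).

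One fix is needed in your self-contained contraction route. $\gamma^+$ need not be positive, since columns basic in the RMP have zero reduced cost at $\pi^{\mathrm{out}}$. This does not matter: with $\Delta=\|\pi^{\mathrm{sep}}-\pi^{\mathrm{out}}\|_2$, the strict condition $\Delta<\gamma^-/(2\bar a)$ already suffices, because every $q\in\mathcal Q_+(\pi^{\mathrm{out}})$ then has $r_q(\pi^{\mathrm{sep}})\ge-\bar a\Delta>-\gamma^-+\bar a\Delta\ge\sigma(\pi^{\mathrm{sep}})$.
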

By contrast, penalization is robust to the choice of reference point: as the penalty parameters vanish, the penalized problem reduces to the unstabilized RMP. Hence, predicted duals can be used as fixed reference points without affecting convergence.

\section{RL-Guided Stabilization for Column Generation}
\label{sec:method}

% In this section, we introduce the Reinforcement Learning--Guided Stabilization Column Generation (RLSCG) framework, which learns to adaptively select the stabilization parameter at each iteration to accelerate convergence.

%The preceding section shows that the effectiveness of stabilization hinges on a delicate trade-off: aggressive parameter choices can promote exploration but may induce mispricing, whereas conservative choices preserve local progress but may sacrifice opportunities for accelerated convergence. It also shows that the ``safe'' parameter region depends on quantities that are global and prohibitively expensive to evaluate online, making purely analytical tuning impractical within the algorithm. Motivated by this, we cast stabilization parameter selection as a sequential decision-making problem and propose a Reinforcement Learning--Guided Stabilization Column Generation (RLSCG) framework. At each iteration, the RL agent uses the current state of the restricted master problem and the recent algorithmic trajectory to adaptively choose the stabilization parameter, with the goal of improving overall convergence.

The preceding analysis highlights a fundamental trade-off: aggressive parameter choices promote exploration but may induce mispricing, while conservative choices preserve local progress but may limit faster convergence. In this section, we present a Reinforcement Learning–Guided Stabilization Column Generation (RLSCG) framework that applies to both smoothing and penalization, adapting stabilization parameters based on the RMP state and algorithmic trajectory to navigate this trade-off and accelerate convergence.

\subsection{Methodology Framework}
\label{subsec:overview}

The core objective of RLSCG is to learn a policy for selecting stabilization parameters that accelerate convergence for a given method. We formulate the CG process as a \emph{sequential decision-making problem}: at iteration $t$, the agent observes a state $s_t \in \mathcal{S}$ derived from the current RMP and algorithmic progress, and selects a stabilization parameter $\theta_t \in \Theta$ via a policy $\pi_{\mathrm{RL}}$.
$
\theta_t \sim \pi_{\mathrm{RL}}(\cdot \mid s_t).
$
The operator $f_t$ then produces the pricing dual $\pi_t^{\mathrm{sep}}$. In this way, RLSCG replaces fixed or rule-based updates with a learned state-dependent policy. Figure~\ref{fig:methodology} summarizes the framework.

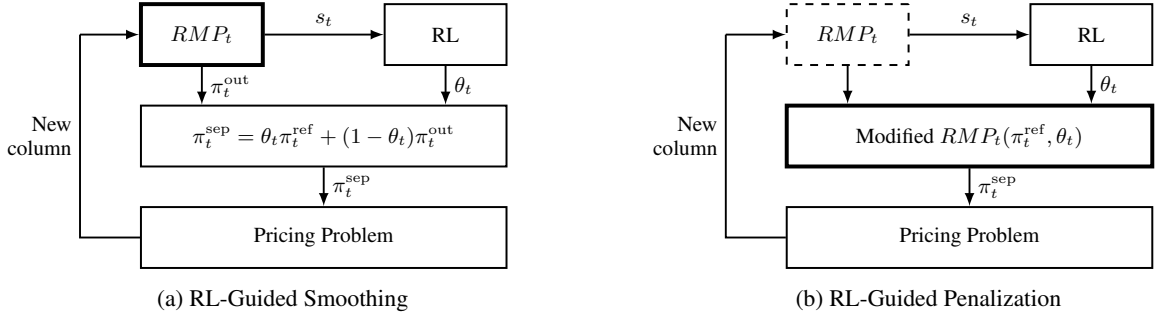
\begin{figure}[ht!]
\centering
\captionsetup[subfigure]{justification=centering,font=small}

\begingroup
\tikzset{
    >=latex, 
    font=\small,
    box/.style={
        draw,
        rectangle,
        align=center,
        inner sep=5pt,
        minimum height=0.90cm,
        line width=0.75pt
    },
    solvedbox/.style={
        draw,
        rectangle,
        align=center,
        inner sep=5pt,
        minimum height=0.90cm,
        line width=1.5pt
    },
    unsolvedbox/.style={
        draw,
        dashed,
        rectangle,
        align=center,
        inner sep=5pt,
        minimum height=0.90cm,
        line width=0.75pt
    },
    flow/.style={->, line width=0.72pt},
    plain/.style={line width=0.72pt}
}

% =========================
% (a) RL-Guided Smoothing
% =========================
\begin{subfigure}[t]{0.48\textwidth}
\centering
% \begin{tikzpicture}[x=1cm,y=1cm]
\begin{tikzpicture}[x=1cm,y=1cm, scale=0.9, transform shape]
    \path[use as bounding box] (-1.3,-0.5) rectangle (5.7,3.5);

    \node[solvedbox, minimum width=1.8cm] (rmp) at (1.0, 3.0) {$RMP_t$};
    \node[box, minimum width=1.8cm] (rl)  at (4.6, 3.0) {RL};

    \node[box, minimum width=5.4cm] (mid) at (2.8, 1.5) {$\pi_t^{\mathrm{sep}} = \theta_t \pi_t^{\mathrm{ref}} + (1-\theta_t)\pi_t^{\mathrm{out}}$};

    \node[box, minimum width=5.4cm] (pricing) at (2.8, 0.0) {Pricing Problem};

    \draw[flow] (rmp.east) -- node[above] {$s_t$} (rl.west);
    \draw[flow] (rmp.south) -- node[right] {$\pi_t^{\mathrm{out}}$} (rmp.south |- mid.north);
    \draw[flow] (rl.south) -- node[right] {$\theta_t$} (rl.south |- mid.north);
    \draw[flow] (mid.south) -- node[right] {$\pi_t^{\mathrm{sep}}$} (pricing.north);

    \coordinate (L1) at (-0.8, 0.0);
    \coordinate (L2) at (-0.8, 3.0);
    \draw[plain] (pricing.west) -- (L1) -- (L2);
    \draw[flow]  (L2) -- (rmp.west);
    \node[align=right, anchor=east] at (-0.8, 1.5) {New\\column};

\end{tikzpicture}
\caption{RL-Guided Smoothing}
\end{subfigure}
\hfill
% =========================
% (b) RL-Guided Penalization
% =========================
\begin{subfigure}[t]{0.48\textwidth}
\centering
% \begin{tikzpicture}[x=1cm,y=1cm]
\begin{tikzpicture}[x=1cm,y=1cm, scale=0.9, transform shape]
    \path[use as bounding box] (-1.3,-0.5) rectangle (5.7,3.5);

    \node[unsolvedbox, minimum width=1.8cm] (rmp) at (1.0, 3.0) {$RMP_t$};
    \node[box, minimum width=1.8cm] (rl)  at (4.6, 3.0) {RL};

    \node[solvedbox, minimum width=5.4cm] (mid) at (2.8, 1.5) {Modified $RMP_t(\pi_t^{\mathrm{ref}}, \theta_t)$};

    \node[box, minimum width=5.4cm] (pricing) at (2.8, 0.0) {Pricing Problem};

    \draw[flow] (rmp.east) -- node[above] {$s_t$} (rl.west);
    \draw[flow] (rmp.south) -- (rmp.south |- mid.north); 
    \draw[flow] (rl.south) -- node[right] {$\theta_t$} (rl.south |- mid.north);
    \draw[flow] (mid.south) -- node[right] {$\pi_t^{\mathrm{sep}}$} (pricing.north);

    \coordinate (L1) at (-0.8, 0.0);
    \coordinate (L2) at (-0.8, 3.0);
    \draw[plain] (pricing.west) -- (L1) -- (L2);
    \draw[flow]  (L2) -- (rmp.west);
    \node[align=right, anchor=east] at (-0.8, 1.5) {New\\column};

\end{tikzpicture}
\caption{RL-Guided Penalization}
\end{subfigure}

\caption{Methodology framework of RLSCG. 
% At iteration $t$, the RL agent observes the state $s_t$ extracted from the current restricted master problem and selects the stabilization parameter $\theta_t$, which determines the stabilized dual vector $\pi_t^{\mathrm{sep}}$ used in pricing. In the smoothing variant (left), the current $RMP_t$ is explicitly solved to obtain $\pi_t^{\mathrm{out}}$, and $\pi_t^{\mathrm{sep}}$ is computed as a convex combination. In the penalization variant (right), the current $RMP_t$ provides only structural information for the RL agent; the modified $RMP_t(\pi_t^{\mathrm{ref}},\theta_t)$ is the formulation explicitly solved to produce $\pi_t^{\mathrm{sep}}$. Thick borders indicate solved RMP formulation at that iteration; dashed borders indicate unsolved ones.
At iteration $t$, the RL agent observes a state $s_t$ from the current $RMP_t$ and selects $\theta_t$. 
% for computing the dual vector $\pi_t^{\mathrm{sep}}$ used in pricing. 
Left: smoothing solves $RMP_t$ to obtain $\pi_t^{\mathrm{out}}$ and then forms $\pi_t^{\mathrm{sep}}$. Right: penalization solves the modified $RMP_t(\pi_t^{\mathrm{ref}},\theta_t)$ to obtain $\pi_t^{\mathrm{sep}}$.
% , while the current $RMP_t$ is used only for state construction. 
Thick borders denote formulations solved at iteration $t$; dashed borders denote unsolved ones.}
\label{fig:methodology}

\endgroup
\end{figure}

%\caption{Methodology framework of the Reinforcement Learning--Guided Stabilization Column Generation (RLSCG). At iteration $t$, the RL agent observes the state $s_t$ extracted from the current restricted master problem and selects the stabilization parameter $\theta_t$, which determines the stabilized dual vector $\pi_t^{\mathrm{sep}}$ used in pricing. The left panel shows the smoothing variant, where the current $RMP_t$ is explicitly solved to obtain $\pi_t^{\mathrm{out}}$, and $\pi_t^{\mathrm{sep}}$ is then computed from $\pi_t^{\mathrm{ref}}$, $\theta_t$, and $\pi_t^{\mathrm{out}}$. The right panel shows the penalization variant, where the current $RMP_t$ is used only to construct the state for the RL agent, while the modified $RMP_t(\pi_t^{\mathrm{ref}},\theta_t)$ is the formulation explicitly solved to produce $\pi_t^{\mathrm{sep}}$. Boxes with thick borders indicate the RMP formulation explicitly solved at that iteration.}
% \label{fig:methodology}

\subsection{RL Formulation}
\label{sec:RL}

The remainder of this section details each component of the RL formulation: the state representation, action space, state transition dynamics, and reward function.

\subsubsection{State Representation}
\label{subsubsec:state}
% The RL agent requires a state representation that captures both the structure of the current RMP and the trajectory of the algorithm. 
The state vector $s_t$ combines two complementary sources of information: a graph-based encoding of the RMP structure and a vector of global features summarizing algorithmic progress. We describe each in turn.

\paragraph{Bipartite graph representation.}
Following recent work that applies Graph Neural Networks to column generation \citep{morabit2021machine,chi2022deep}, we model the RMP at iteration $t$ as an undirected bipartite graph $G_t = (\mathcal{V}_t \cup \mathcal{C}, E_t)$, where $\mathcal{V}_t$ is the set of column nodes and $\mathcal{C}$ is the set of constraint nodes; see Figure~\ref{fig:bipartite_rmp_graph} in Appendix~\ref{appendix:column feature} for an illustration. An edge $(v, c) \in E_t$ exists whenever the coefficient $A_{cv}$ is nonzero. This bipartite structure naturally reflects the column--constraint interactions in the RMP.

Each node carries a feature vector that encodes information relevant to the stabilization decision. Column node features $\mathbf{x}_v$ capture the role of each variable in the current RMP, including its basis status, the number of constraints it participates in, and its reduced cost at generation. Constraint node features $\mathbf{x}_c$ encode corresponding dual information, including the current dual value, the right-hand side coefficient, and a measure of dual variability over recent iterations. The complete feature specification is in Table~\ref{appendix:node_features} of Appendix~\ref{appendix:column feature}.
% \edit{SOME APPENDIX ARE NUMBERS, OTHERS ARE LETTERS.}

We process the bipartite graph with a message-passing GNN consisting of $L$ layers. Let $\mathbf{r}_{v}^{(\ell)}$ and $\mathbf{r}_{c}^{(\ell)}$ denote the embeddings of column node $v$ and constraint node $c$ at layer $\ell$, initialized as $\mathbf{r}_{v}^{(0)} = \mathbf{x}_{v}$ and $\mathbf{r}_{c}^{(0)} = \mathbf{x}_{c}$. Each layer $\ell = 1, \dots, L$ performs two sequential update phases. 
In the first phase, each constraint node aggregates information from its neighboring columns. In the second phase, each column node aggregates information from the newly updated constraint embeddings:
\begin{equation*}
\begin{aligned}
\mu_c^{(\ell)} &= \frac{1}{\sqrt{|\mathcal{N}(c)|}} \sum_{v \in \mathcal{N}(c)} \Phi_c\!\left(\mathbf{r}_c^{(\ell-1)},\,\mathbf{r}_v^{(\ell-1)}\right), \quad
\mathbf{r}_c^{(\ell)} = \Psi_c\!\left(\left[\mathbf{r}_c^{(\ell-1)},\, \mu_c^{(\ell)}\right]\right), \\
\mu_v^{(\ell)} &= \frac{1}{\sqrt{|\mathcal{N}(v)|}} \sum_{c \in \mathcal{N}(v)} \Phi_v\!\left(\mathbf{r}_v^{(\ell-1)},\,\mathbf{r}_c^{(\ell)}\right), \quad
\mathbf{r}_v^{(\ell)} = \Psi_v\!\left(\left[\mathbf{r}_v^{(\ell-1)},\, \mu_v^{(\ell)}\right]\right).
\end{aligned}
\end{equation*}
where $\mathcal{N}(c)$ denotes the neighbors of $c$, and $\Phi_c$, $\Psi_c$ are multi-layer perceptrons. 
This ordering ensures that within each layer, information flows from columns to constraints and then back, enabling bidirectional propagation per layer. After $L$ layers, the local structural representation is obtained by mean-pooling over all final node embeddings:
$$
\mathbf{h}_{\text{local}} = \frac{1}{|\mathcal{V}_t| + |\mathcal{C}|} \left( \sum_{v \in \mathcal{V}_t} \mathbf{r}^{(L)}_v + \sum_{c \in \mathcal{C}} \mathbf{r}^{(L)}_c \right).
$$

\paragraph{Global features.}
While the bipartite graph captures local RMP information, it does not reflect the global progress of the algorithm. We therefore include a global feature vector $\mathbf{g}_t$ that is processed through a separate multi-layer perceptron: \(
\mathbf{h}_{\text{global}} = \text{MLP}_{\text{global}}(\mathbf{g}_t).
\)
The vector $\mathbf{g}_t$ contains problem-specific and algorithmic features, including convergence indicators (iteration count and relative progress), stabilization indicators (the previous parameter $\theta_{t-1}$, the distance $\|\pi_t^{\mathrm{sep}}-\pi_t^{\mathrm{ref}}\|_2$, and the inter-iteration dual movement $\|\pi_t^{\mathrm{sep}}-\pi_{t-1}^{\mathrm{sep}}\|_2$), and mispricing proxies. 

The mispricing proxies indicate when stabilization becomes overly aggressive. For smoothing, mispricing is directly observable, so we track its cumulative count. For penalization, however, it is not directly observable without solving the unstabilized RMP to recover $\pi_t^{\mathrm{out}}$, which is too costly online. We therefore use proxy signals: the average minimum reduced cost and relative progress over a recent window, and the number of consecutive iterations without progress. The full specification is given in Appendix~\ref{appendix:column feature} (Table~\ref{appendix:global_features}).

\paragraph{State vector.}
The final state representation is the concatenation:
$
s_t = \left[\mathbf{h}_{\text{local}} \,;\, \mathbf{h}_{\text{global}}\right].
$

\subsubsection{Actions, Transitions, and Rewards}

Having defined the state space and what the agent observes, we now specify how it acts and what it optimizes.  

\paragraph{Action space.}
At each iteration, the agent selects a stabilization parameter $\theta_t$ from a finite discrete action space $\Theta$. For \textit{smoothing}, $\theta_t$ corresponds to the smoothing weight $\alpha_t$, and $\Theta$ is a discretization of $[0, 1)$. For \textit{penalization}~\eqref{eq:f_pen1}, we restrict attention to symmetric penalties $\varepsilon_t^- = \varepsilon_t^+ =: \varepsilon_t$; the action $\theta_t$ is then the scalar penalty coefficient $\varepsilon_t$, and $\Theta$ is a discretization of $[0, \varepsilon_{\max}]$.

\paragraph{State transition.}
Given a state $s_t$ and a selected parameter $\theta_t$, the stabilization operator computes $\pi_t^{\mathrm{sep}}$, the pricing subproblem returns a new column, and that column is added to the RMP.
% ---corresponding to one full iteration of Algorithm~\ref{alg:stabilized-cg}. 
The updated RMP then defines the next state $s_{t+1}$.

For penalization methods, we additionally employ \emph{dynamic action masking} 
to enforce eventual vanishing of the penalty parameter. Specifically, we maintain an upper bound $\overline{\varepsilon}$ and restrict the effective action space at iteration $t$ to $\{\varepsilon \in \Theta : \varepsilon \le \overline{\varepsilon}\}$. Whenever pricing returns no improving column, i.e., $\sigma(\pi_t^{\mathrm{sep}}) \ge 0$, we update $\overline{\varepsilon} \leftarrow \rho \overline{\varepsilon}$ using a fixed decay factor $\rho = 0.5$. Hence $\varepsilon_t$ eventually vanishes, restoring alignment between $\pi_t^{\mathrm{sep}}$ and $\pi_t^{\mathrm{out}}$.

\paragraph{Reward function.}
The reward signal encourages the agent to minimize the total number of iterations to convergence. Each iteration incurs a fixed cost $\eta > 0$, offset by a term that credits objective improvement:
\begin{equation}
r_t =
\begin{cases}
  \Delta_t - \eta, & \text{if a new column is added}, \\
  R_{\text{conv}}, & \text{if the algorithm terminates optimally},
\end{cases}
\label{eq:reward}
\end{equation}
where $R_{\text{conv}} > 0$ is a terminal reward and $\Delta_t$ normalizes the per-iteration progress by the initial objective value $Z_0$: \(\Delta_t = \beta \frac{\text{Improvement}_t}{|Z_0|},\) where $\beta > 0$ scales the improvement.
% \begin{equation}
% \Delta_t = \beta \frac{\text{Improvement}_t}{|Z_0|},
% \end{equation}
% where $\beta > 0$ controls the relative weight of improvement against the iteration cost $\eta$. 

% The RL policy is trained to maximize the expected cumulative reward over the entire column generation trajectory. In particular, for an episode terminating at iteration $T$, the return from iteration $t$ is $G_t = \sum_{\tau=t}^{T} \gamma^{\,\tau-t} r_\tau,$ where $\gamma \in (0,1]$ is the discount factor. 
% We note that $\eta$, $\beta$, $R_{\text{conv}}$ and $\gamma$ are fixed reward-shaping hyperparameters, described in Appendix~\ref{appendix:hyper}.

The RL policy maximizes the expected cumulative reward over the column generation trajectory. For an episode terminating at iteration $T$, the return from iteration $t$ is $G_t = \sum_{\tau=t}^{T} \gamma^{\,\tau-t} r_\tau$, where $\gamma \in (0,1]$ is the discount factor. Reward-shaping hyperparameters ($\eta$, $\beta$, $R_{\text{conv}}$, $\gamma$) are detailed in Appendix~\ref{appendix:hyper}.

% , set once and shared across all instances; they are not the subject of adaptation. 
% The adaptivity of our framework lies in the stabilization parameter $\theta_t$, which the agent adjusts at each iteration based on the observed state. 
% In our experiments, we adopt them from \cite{chi2022deep} without additional tuning, as detailed in Appendix~\ref{appendix:hyper}.

The definition of $\text{Improvement}_t$ depends on the stabilization method. For smoothing, it is the primal objective reduction:
% the primal objective reduction:
$\text{Improvement}_t = z^{\text{RMP}}_{t-1} - z^{\text{RMP}}_t.$ 
For penalization, we do not use the RMP objective directly, since it includes stabilization terms. 
% In problem classes where the stabilized dual point and the exact pricing value yield a valid lower bound $z_t^{\mathrm{LB}}$, we maintain the incumbent lower bound \(\underline z_t^{\mathrm{LB}}
% =
% \max\left\{\underline z_{t-1}^{\mathrm{LB}},\, z_t^{\mathrm{LB}}\right\},\) and set \(\mathrm{Improvement}_t
% =\underline z_t^{\mathrm{LB}} - \underline z_{t-1}^{\mathrm{LB}} .\)
In problem classes where a valid lower bound $z_t^{\mathrm{LB}}$ exists, we track the incumbent $\underline z_t^{\mathrm{LB}} = \max\left\{\underline z_{t-1}^{\mathrm{LB}},\, z_t^{\mathrm{LB}}\right\}$ and set $\mathrm{Improvement}_t = \underline z_t^{\mathrm{LB}} - \underline z_{t-1}^{\mathrm{LB}}$. 
The availability of such a lower bound, however, is not universal; it depends on the master formulation and on whether the exact pricing value can be converted into a valid bound for the original problem \citep{desrosiers2005primer,amor2006proximal}. 
When such bounds are unavailable, one can resolve the unpenalized RMP during training to compute the objective reduction. Crucially, this additional solve is restricted to training and not required at deployment. For the Cutting Stock Problem studied here, we employ the valid lower bound of \cite{farley1990note}: $z_t^{\mathrm{LB}} = b^\top \pi_t^{\mathrm{sep}} / (1-c_t^*)$ if $c_t^* < 1$, and $0$ otherwise, where $c_t^* := \min_{p\in P}\{1-a_p^\top \pi_t^{\mathrm{sep}}\}$.

% The availability of such a lower bound, however, is not universal; it depends on the master formulation and on whether the exact pricing value can be converted into a valid bound for the original problem \citep{desrosiers2005primer,amor2006proximal}. 
% Therefore, when no such bound is readily available, one can resolve the unpenalized RMP over the current column set at each training iteration, defining $\text{Improvement}_t$ as the resulting objective reduction.
% We emphasize that this additional solve is incurred only during the training phase; at deployment, the trained policy observes only the state features described in Section~\ref{subsubsec:state} and does not require resolving the unpenalized RMP.
% For the Cutting Stock Problem studied in this paper, we employ the valid lower bound of \cite{farley1990note}, computed as $z_t^{\mathrm{LB}} = b^\top \pi_t^{\mathrm{sep}} / (1-c_t^*)$ if $c_t^* < 1$, and $0$ otherwise, where $c_t^* := \min_{p\in P}\{1-a_p^\top \pi_t^{\mathrm{sep}}\}$ denotes the minimum reduced cost yielded by the pricing problem.

The learned policy is not trained to avoid mispricing per se, but to maximize long-run progress via cumulative reward. In this sense, the RL framework learns when aggressive stabilization is worth its short-term cost and when it is not; a trade-off difficult to capture with fixed update rules.

\section{Computational Experiments}\label{sec:experiment}

To assess the effectiveness of the approach, we implement RLSCG with both the smoothing operator (Eq.~\eqref{eq:f_smooth}) and the penalization operator (Eq.~\eqref{eq:f_pen1}), denoted RLSCG(S) and RLSCG(P), respectively.
We first compare RLSCG with traditional CG to assess whether RL-guided stabilization improves convergence. We then compare it with rule-based methods to determine whether the gains come from learning or can be matched by  simpler parameter updates. Next, we evaluate the use of predicted duals as reference points, extending an idea studied only for penalization to smoothing. Finally, we compare RLSCG with learning-based column selection to assess where learning is most effective.

\subsection{Experimental Setup}\label{subsec:setup}

\paragraph{Problem description.}

We use the Cutting Stock Problem (CSP) as our testbed. We formulate it as a set covering model in which each column is a feasible cutting pattern and the pricing subproblem is a knapsack problem. 
% The full formulation is given in Appendix~\ref{appendix:CSP}. 
Although the CSP is an integer program, we focus on CG behavior and therefore solve only the root-node linear relaxation.

\paragraph{Action space.}
The action space has 20 discrete values in both implementations. For smoothing, these are values of $\alpha$ with step size 0.05 over $[0,1)$; for penalization, with symmetric penalties $\varepsilon_t^-=\varepsilon_t^+=\varepsilon_t$, they are equally spaced values of $\varepsilon$ over $[0,1]$.

\paragraph{Instances.}
For training, we generate 500 random instances with roll length $L \sim \mathcal{U}[200,800]$ and number of item types $m \in [20,100]$. For each instance, we sample $lb \sim \mathcal{U}[0.05,0.45]$ and $ub \sim \mathcal{U}[0.5,0.85]$, then draw each item length $\ell_i$ uniformly from $[lb\,L,\; ub\,L]$ and each demand $d_i$ from $[1,20]$.
For evaluation, we generate four groups of synthetic test sets (Gen\_1 to Gen\_4) that vary the ratio of item size to roll length. 
Each group contains Small ($m \in [20,50]$), Medium ($m \in [50,100]$), and Large ($m \in [100,150]$) instances; generation details are given in Table~\ref{tab:data_generation} in Appendix \ref{appendix:CSP}. We also test on BPPLIB benchmark instances \citep{delorme2018bpplib}, including Falkenauer (U and T), Hard28, Schwerin, and Waescher.

\paragraph{Implementation.}
We train the RL agent with Deep Q-Network; details of the GNN architecture, hyperparameters, and computing environment are given in Appendix~\ref{appendix:hyper}. For each stabilization method, a single policy is trained once (about 8 hours) and applied to all test sets. At deployment, the policy adds negligible per-iteration overhead (see Section~\ref{subsec:comparison_rlcg}). The code will be released on GitHub upon acceptance.
% \edit{ 
All runtimes are reported within a common implementation environment, where the RMP is updated incrementally across iterations rather than rebuilt from scratch; iteration counts are invariant to this choice and serve as our primary algorithmic metric.
% }

\subsection{Overall Convergence Improvement}
\label{subsec:overall_performance}

We first compare RLSCG(S) and RLSCG(P) with traditional column generation (TCG), which uses no stabilization and adds, at each iteration, the column with minimum reduced cost under $\pi_t^{\mathrm{out}}$. Table~\ref{tab:overall_results} reports average iterations and computation time over all test sets.

\begin{table}[htbp]
\centering
\tiny
\caption{Performance of TCG, RLSCG(S), and RLSCG(P); RLSCG uses predicted duals. Percentages are relative to TCG; bold denotes the best performance in each row.}
\label{tab:overall_results}
\resizebox{\textwidth}{!}{%
\begin{tabular}{ccccccccc}
\toprule
\multicolumn{2}{c}{\multirow{2}{*}{\textbf{Test Set}}} & \multirow{2}{*}{\textbf{\# Ins.}} & \multicolumn{2}{c}{\textbf{TCG}} & \multicolumn{2}{c}{\textbf{RLSCG(S)}} & \multicolumn{2}{c}{\textbf{RLSCG(P)}} \\
\cmidrule(lr){4-5} \cmidrule(lr){6-7} \cmidrule(lr){8-9}
 & & & \textbf{Iter.} & \textbf{Time(s)} & \textbf{Iter.} & \textbf{Time(s)} & \textbf{Iter.} & \textbf{Time(s)} \\
\midrule
\multirow{3}{*}{Gen\_1} & Small & 50 & 72.4 & 0.98 & \textbf{39.0} (-46\%) & \textbf{0.64} (-35\%) & 48.5 (-33\%) & 0.81 (-17\%) \\
 & Medium & 50 & 243.2 & 3.57 & \textbf{80.0} (-67\%) & \textbf{1.52} (-57\%) & 112.5 (-54\%) & 2.18 (-39\%) \\
 & Large & 50 & 641.3 & 10.25 & \textbf{192.3} (-70\%) & \textbf{4.35} (-58\%) & 217.3 (-66\%) & 4.74 (-54\%) \\
\midrule
\multirow{3}{*}{Gen\_2} & Small & 50 & 20.5 & 0.28 & \textbf{12.4} (-40\%) & \textbf{0.20} (-28\%) & 16.6 (-19\%) & 0.28 (-2\%) \\
 & Medium & 50 & 66.6 & 0.96 & \textbf{22.6} (-66\%) & \textbf{0.38} (-60\%) & 34.0 (-49\%) & 0.60 (-38\%) \\
 & Large & 50 & 231.2 & 3.39 & \textbf{63.6} (-72\%) & \textbf{1.11} (-67\%) & 69.0 (-70\%) & 1.27 (-63\%) \\
\midrule
\multirow{3}{*}{Gen\_3} & Small & 50 & 37.6 & 0.51 & \textbf{17.7} (-53\%) & \textbf{0.29} (-44\%) & 24.6 (-35\%) & 0.40 (-21\%) \\
 & Medium & 50 & 140.8 & 2.01 & \textbf{37.4} (-73\%) & \textbf{0.64} (-68\%) & 53.6 (-62\%) & 0.94 (-53\%) \\
 & Large & 50 & 418.8 & 6.25 & \textbf{89.9} (-79\%) & \textbf{1.62} (-74\%) & 102.5 (-76\%) & 1.89 (-70\%) \\
\midrule
\multirow{3}{*}{Gen\_4} & Small & 50 & 39.1 & \textbf{0.57} & \textbf{37.7} (-4\%) & 0.66 (+16\%) & 42.1 (+8\%) & 0.74 (+30\%) \\
 & Medium & 50 & 87.7 & \textbf{1.35} & \textbf{80.9} (-8\%) & 1.59 (+18\%) & 89.2 (+2\%) & 1.70 (+26\%) \\
 & Large & 50 & 150.5 & \textbf{2.40} & \textbf{138.3} (-8\%) & 2.70 (+12\%) & 151.1 (+1\%) & 3.06 (+27\%) \\
\midrule
\multicolumn{2}{c}{Falkenauer\_U} & 80 & 198.2 & 2.80 & \textbf{140.7} (-29\%) & \textbf{2.44} (-13\%) & 162.4 (-18\%) & 2.92 (+4\%) \\
\multicolumn{2}{c}{Falkenauer\_T} & 80 & 570.8 & 9.31 & \textbf{259.9} (-54\%) & \textbf{5.42} (-42\%) & 269.3 (-53\%) & 5.64 (-39\%) \\
\multicolumn{2}{c}{Schwerin} & 200 & 59.2 & 0.81 & \textbf{45.3} (-23\%) & \textbf{0.75} (-7\%) & 66.4 (+12\%) & 1.12 (+39\%) \\
\multicolumn{2}{c}{Waescher} & 17 & 61.6 & \textbf{1.33} & \textbf{60.5} (-2\%) & 1.41 (+6\%) & 62.2 (+1\%) & 1.49 (+12\%) \\
\multicolumn{2}{c}{Hard\_28} & 28 & 402.4 & \textbf{8.53} & 377.1 (-6\%) & 9.84 (+15\%) & \textbf{364.9} (-9\%) & 9.88 (+16\%) \\
\bottomrule
\end{tabular}%
}
\end{table}

% The results demonstrate that RLSCG outperforms TCG on most synthetic and benchmark instances, particularly as problem complexity increases. Among the two variants, RLSCG(S) achieves the most substantial improvements: on Gen\_1 to Gen\_3, it reduces computation time by approximately 80\% to 90\% compared to TCG. This validates the value of controlled pricing distortion: by dynamically balancing dual exploration and local exploitation, stabilization can substantially accelerate overall convergence.

% \edit{Results show that RLSCG outperforms TCG on many synthetic and benchmark instances, particularly as complexity increases. Between the variants, RLSCG(S) achieves consistent, substantial improvements across the synthetic sets, reducing computation time on Gen\_1 to Gen\_3 by 28\%--74\% compared to TCG. This validates the value of controlled pricing distortion: by dynamically balancing dual exploration and local exploitation, stabilization can substantially accelerate overall convergence.}

% \edit{
Results show that RLSCG outperforms TCG on many synthetic and benchmark instances, particularly as complexity increases. Between the variants, RLSCG(S) achieves consistent, substantial improvements across the synthetic sets, reducing iteration counts on Gen\_1 to Gen\_3 by 40--80\% relative to TCG, with corresponding time reductions of 28--74\%.
% }
%This validates the value of controlled pricing distortion: by dynamically balancing dual exploration and local exploitation, stabilization can substantially accelerate overall convergence.

% \edit{
Gen\_4 is the exception in time but not in iterations: RLSCG(S) still reduces iterations by 4--8\% over TCG, while TCG is slightly faster in wall-clock time.
% } 
These instances feature very small items relative to the roll length, yielding high-cardinality patterns and severe degeneracy. Consequently, many bases share similar objective values, and numerous nonbasic columns have near-zero reduced costs. This shrinks the separation margin $\min\{\gamma_t^-,\gamma_t^+\}$ (Proposition~\ref{prop:local-consistency-nos-stall}), contracting the safe perturbation radius that prevents mispricing. 
% \edit{
The discrepancy of fewer iterations yet longer runtimes reflects RL inference overhead and an inherent stabilization drawback: stabilized duals can make pricing computationally harder by weakening reduced-cost signals \citep{lubbecke2005selected}.
% }

%\edit{An exception occurs in Gen\_4, where TCG outperforms the stabilized methods. These instances feature very small items relative to the roll length, yielding high-cardinality patterns and severe degeneracy. Consequently, many bases share similar objective values, and numerous nonbasic columns have near-zero reduced costs. This shrinks the separation margin $\min\{\gamma_t^-,\gamma_t^+\}$ (Proposition~\ref{prop:local-consistency-nos-stall}), contracting the safe perturbation radius that prevents mispricing. While indicating that stabilization acceleration is problem-dependent, RLSCG remains robust to this degenerate regime in iteration counts. This discrepancy—fewer iterations yet longer runtimes—stems from RL inference overhead and, more importantly, an inherent stabilization drawback: stabilized duals may make the pricing subproblem computationally harder by weakening reduced-cost signals and degrading pruning efficiency \citep{lubbecke2005selected}.}

While RLSCG(P) improves upon TCG across many test sets, it generally lags behind RLSCG(S), this gap stems from their differing stabilization mechanisms: smoothing explicitly interpolates duals using a single bounded parameter $\alpha \in [0, 1)$, whereas penalization modifies the RMP structure via unbounded weight coefficients. This inherently complicates regulating the distance between $\pi_{\text{sep}}$ and $\pi_{\text{out}}$, making pricing distortion and parameter selection harder to control.

\subsection{Learned vs. Rule-Based Parameter Adaptation}
\label{subsec:adaptive_parameter}

To determine whether RLSCG's performance gains stem from the learned policy or could be achieved by simpler rule-based parameter updates, we compare it against established adaptive strategies.
Importantly, the experimental protocols differ between the two methods. For consistency with recent studies \citep{kraul2023machine,shen2024adaptive}, RLSCG(P) uses predicted dual vectors as reference points. However, since predicted references remain unstudied in smoothing, this section evaluates an RLSCG(S) variant that relies solely on the algorithm's dual history for reference points. We defer the analysis of predicted-reference smoothing to Section~\ref{subsec:predicted_dual_smoothing}.

\paragraph{Smoothing.}

We compare RLSCG(S) against two rule-based adaptive smoothing strategies:
\begin{itemize}
    \item \textit{ASCG-1} \citep{wentges1997weighted} uses a progress-based rule:
  $\alpha_t = 1 - \frac{1}{\min\left\{\overline{c},\; \frac{t + N}{2}\right\}},$ where $t$ is the iteration index, $N \ge 1$ tracks the cumulative number of dual bound improvements, and $\overline{c}=10$ caps the growth of $\alpha_t$. 
    \item \textit{ASCG-2} \citep{pessoa2018automation} employs a mispricing-triggered fallback. Starting from $\alpha_0 = 0.5$, each consecutive mispricing event $k$ reduces the weight to $\alpha = \max(0, 1 - k(1 - \alpha_0))$ and retriggers pricing, repeating until mispricing no longer occurs or $\alpha$ reaches zero.
\end{itemize}

As shown in Table~\ref{tab:smoothing_results}, RLSCG(S) substantially outperforms both baselines, especially on large instances. ASCG-2 generally outperforms ASCG-1 by reducing $\alpha$ to correct mispricing and enforce conservative updates. RLSCG(S) improves upon this by strategically tolerating mispricing to balance exploration and exploitation.

% \edit{
Furthermore, Tables~\ref{tab:overall_results} and \ref{tab:smoothing_results} reveal that rule-based smoothing underperforms TCG on these CSP instances, whereas RLSCG(S) consistently improves upon it. This highlights that while stabilization has the potential to accelerate convergence, carefully controlling its magnitude is critical to realizing these benefits.
% }

%\edit{Furthermore, Tables~\ref{tab:overall_results} and \ref{tab:smoothing_results} reveal that ASCG underperforms TCG on CSP instances, whereas RLSCG(S) consistently improves upon it. This highlights that while stabilization have the potential to accelerate convergence, carefully controlling its magnitude is critical to realizing these benefits.}

\begin{table}[htbp]
\tiny
\centering
\caption{Comparison of Smoothing Methods. 
Percentages are relative to the best value within each row.
%For each test set, the minimum values in each scale are highlighted in bold. 
}
\label{tab:smoothing_results}
\resizebox{\textwidth}{!}{%
\begin{tabular}{cccccccc}
\toprule
\multicolumn{2}{c}{\multirow{2}{*}{\textbf{Test set}}}& \multicolumn{2}{c}{\textbf{ASCG-1}} & \multicolumn{2}{c}{\textbf{ASCG-2}} & \multicolumn{2}{c}{\textbf{RLSCG(S)}} \\
\cmidrule(lr){3-4} \cmidrule(lr){5-6} \cmidrule(lr){7-8}
 &  & \multicolumn{1}{c}{\textbf{Iter.}} & \multicolumn{1}{c}{\textbf{Time (s)}} & \multicolumn{1}{c}{\textbf{Iter.}} & \multicolumn{1}{c}{\textbf{Time (s)}} & \multicolumn{1}{c}{\textbf{Iter.}} & \multicolumn{1}{c}{\textbf{Time (s)}} \\
\midrule
Gen\_1 & Small & 141.7 (+206\%) & 1.93 (+159\%) & 70.1 (+52\%) & 0.99 (+32\%) & \textbf{46.3} & \textbf{0.74} \\
 & Medium & 327.5 (+217\%) & 6.84 (+255\%) & 251.7 (+144\%) & 4.70 (+144\%) & \textbf{103.2} & \textbf{1.92} \\
 & Large & 1159.0 (+477\%) & 45.84 (+931\%) & 680.1 (+239\%) & 12.84 (+189\%) & \textbf{200.7} & \textbf{4.45} \\
\midrule
Gen\_2 & Small & 105.9 (+600\%) & 1.43 (+495\%) & 19.0 (+25\%) & 0.32 (+32\%) & \textbf{15.1} & \textbf{0.24} \\
 & Medium & 164.4 (+427\%) & 2.29 (+338\%) & 62.6 (+101\%) & 1.11 (+111\%) & \textbf{31.2} & \textbf{0.52} \\
 & Large & 340.6 (+435\%) & 5.04 (+339\%) & 226.5 (+256\%) & 4.15 (+261\%) & \textbf{63.6} & \textbf{1.15} \\
\midrule
Gen\_3 & Small & 122.2 (+444\%) & 1.65 (+344\%) & 35.8 (+59\%) & 0.60 (+61\%) & \textbf{22.5} & \textbf{0.37} \\
 & Medium & 236.1 (+382\%) & 3.34 (+299\%) & 135.0 (+175\%) & 2.17 (+159\%) & \textbf{49.0} & \textbf{0.84} \\
 & Large & 523.2 (+438\%) & 7.83 (+358\%) & 409.4 (+321\%) & 7.51 (+339\%) & \textbf{97.2} & \textbf{1.71} \\
\midrule
Gen\_4 & Small & 81.9 (+114\%) & 1.25 (+94\%) & 56.5 (+48\%) & 1.05 (+63\%) & \textbf{38.3} & \textbf{0.65} \\
 & Medium & 147.9 (+80\%) & 2.46 (+62\%) & 120.8 (+47\%) & 2.47 (+63\%) & \textbf{82.0} & \textbf{1.52} \\
 & Large & 225.1 (+63\%) & 3.85 (+43\%) & 197.3 (+43\%) & 4.07 (+51\%) & \textbf{138.2} & \textbf{2.70} \\
\midrule
\multicolumn{2}{c}{Falkenauer\_U} & 254.2 (+50\%) & 3.60 (+22\%) & 207.3 (+22\%) & 2.98 (+1\%) & \textbf{169.4} & \textbf{2.96} \\
\multicolumn{2}{c}{Falkenauer\_T} & 725.9 (+165\%) & 16.01 (+186\%) & 575.4 (+110\%) & 11.42 (+104\%) & \textbf{273.6} & \textbf{5.59} \\
\multicolumn{2}{c}{Schwerin} & 85.6 (+58\%) & 1.18 (+40\%) & 60.1 (+11\%) & \textbf{0.84} & \textbf{54.1} & 0.89 (+6\%) \\
\multicolumn{2}{c}{Waescher} & 98.4 (+62\%) & 4.96 (+255\%) & 89.8 (+48\%) & 3.96 (+184\%) & \textbf{60.6} & \textbf{1.40} \\
\multicolumn{2}{c}{Hard28} & 503.2 (+34\%)& 31.13 (+217\%)& 507.0 (+35\%) & 21.88 (+123\%) & \textbf{375.4} & \textbf{9.81} \\
\bottomrule
\end{tabular}%
}
\end{table}

\paragraph{Penalization.}
We compare RLSCG(P) against rule-based adaptive penalization strategies:
\begin{itemize}
    \item \textit{APCG-1} \citep{kraul2023machine} uses predicted duals as a fixed reference with an initial penalty $\varepsilon = 0.1$, halving $\varepsilon$ upon mispricing.

    \item \textit{APCG-2} \citep{shen2024adaptive} also uses predicted duals as a fixed reference but dynamically updates $\varepsilon = \frac{\mathrm{rc}}{\mathrm{rc}-1}$ if the minimum reduced cost $\mathrm{rc} < 0$, and $\varepsilon = 0$ otherwise.
\end{itemize}

% \edit{Table~\ref{tab:penalization_results} shows that RLSCG(P) outperforms both rule-based penalization baselines on most instances, especially on harder instances. On smaller problems, the margin narrows and APCG is occasionally competitive. This suggests heuristic updates can be adequate on some instances. Also, since penalization baselines already exploit predicted duals as reference points, it elevates baseline efficiency and narrows the margin for improvement. 
% This raises a natural question: can the smoothing method also benefit from predicted duals?  We investigate this in the next subsection.}

% \edit{
Table~\ref{tab:penalization_results} shows that RLSCG(P) outperforms both rule-based penalization baselines on most instances, with the largest gains on harder problems. On smaller instances the margin narrows and APCG is occasionally competitive, indicating that heuristic updates can be adequate when the search landscape is less challenging. The penalization baselines already exploit predicted duals as reference points, which raises their baseline efficiency and leaves a narrower margin for further improvement. This raises a natural question: can the smoothing method also benefit from predicted duals? We investigate this in the next subsection.
% }

\begin{table}[htbp]
\tiny
\centering
\caption{Comparison of Penalization Methods. 
For each test set, the minimum values in each metric are highlighted in bold. 
Percentages indicate the relative increase (+) compared to the best value within each row and metric; ``-'' indicates that the algorithm failed to converge for some instances within the 3,600-second time limit.}
\label{tab:penalization_results}
\resizebox{\textwidth}{!}{%
\begin{tabular}{cccccccc}
\toprule
\multicolumn{2}{c}{\multirow{2}{*}{\textbf{Test Set}}} & \multicolumn{2}{c}{\textbf{APCG-1}} & \multicolumn{2}{c}{\textbf{APCG-2}} & \multicolumn{2}{c}{\textbf{RLSCG(P)}} \\
\cmidrule(lr){3-4} \cmidrule(lr){5-6} \cmidrule(lr){7-8}
 & & \textbf{Iter.} & \textbf{Time(s)} & \textbf{Iter.} & \textbf{Time(s)} & \textbf{Iter.} & \textbf{Time(s)} \\
\midrule
\multirow{3}{*}{Gen\_1} & Small & 59.2 (+22\%) & 0.83 (+9\%) & 52.8 (+9\%) & \textbf{0.76} & \textbf{48.5} & 0.81 (+7\%) \\
 & Medium & 142.7 (+27\%) & 3.11 (+43\%) & 126.2 (+12\%) & 2.70 (+24\%) & \textbf{112.5} & \textbf{2.18} \\
 & Large & 264.2 (+22\%) & 6.28 (+33\%) & 239.8 (+10\%) & 5.81 (+23\%) & \textbf{217.3} & \textbf{4.74} \\
\midrule
\multirow{3}{*}{Gen\_2} & Small & 19.7 (+27\%) & 0.30 (+24\%) & \textbf{15.5} & \textbf{0.24} & 16.6 (+7\%) & 0.28 (+14\%) \\
 & Medium & 40.4 (+19\%) & 0.68 (+15\%) & 34.2 (+1\%) & \textbf{0.59} & \textbf{34.0} & 0.60 (+1\%) \\
 & Large & 85.1 (+23\%) & 1.58 (+25\%) & 71.5 (+4\%) & 1.38 (+9\%) & \textbf{69.0} & \textbf{1.27} \\
\midrule
\multirow{3}{*}{Gen\_3} & Small & 29.9 (+23\%) & 0.43 (+22\%) & \textbf{24.3} & \textbf{0.35} & 24.6 (+1\%) & 0.40 (+14\%) \\
 & Medium & 66.7 (+24\%) & 1.06 (+16\%) & 56.0 (+4\%) & \textbf{0.91} & \textbf{53.6} & 0.94 (+3\%) \\
 & Large & 130.6 (+27\%) & 2.27 (+20\%) & 112.1 (+9\%) & 2.00 (+6\%) & \textbf{102.5} & \textbf{1.89} \\
\midrule
\multirow{3}{*}{Gen\_4} & Small & 62.6 (+49\%) & 0.99 (+33\%) & 59.3 (+41\%) & 0.94 (+27\%) & \textbf{42.1} & \textbf{0.74} \\
 & Medium & 128.7 (+44\%) & 2.23 (+32\%) & 125.4 (+41\%) & 2.16 (+27\%) & \textbf{89.2} & \textbf{1.70} \\
 & Large & 208.5 (+38\%) & 3.88 (+27\%) & 205.0 (+36\%) & 3.83 (+25\%) & \textbf{151.1} & \textbf{3.06} \\
\midrule
\multicolumn{2}{c}{Falkenauer\_U} & 181.7 (+12\%) & 2.74 (+5\%) & 174.0 (+7\%) & \textbf{2.62} & \textbf{162.4} & 2.92 (+12\%) \\
\multicolumn{2}{c}{Falkenauer\_T} & 354.2 (+32\%) & 7.63 (+35\%) & 338.2 (+26\%) & 7.31 (+30\%) & \textbf{269.3} & \textbf{5.64} \\
\multicolumn{2}{c}{Schwerin} & 77.1 (+16\%) & 1.10 (+5\%) & 73.3 (+11\%) & \textbf{1.05} & \textbf{66.4} & 1.12 (+7\%) \\
\multicolumn{2}{c}{Waescher} & 101.5 (+63\%) & 3.92 (+163\%) & 96.2 (+55\%) & 3.49 (+134\%) & \textbf{62.2} & \textbf{1.49} \\
\multicolumn{2}{c}{Hard28} & 535.6 (+47\%) & 23.89 (+142\%) & 513.9 (+41\%) & 19.55 (+98\%) & \textbf{364.9} & \textbf{9.88} \\
\bottomrule
\end{tabular}%
}
\end{table}
% \begin{remark}
%     One could ask whether the learned policy reveals an interpretable parameter schedule that could itself serve as a rule. We examined the parameter values chosen across instances and found no recurring pattern. The policy's decisions depend on the evolving algorithmic state in ways that do not reduce to a simple heuristic, which aligns with our motivation for adopting a learning-based approach.
% \end{remark}
%; thus, we omit detailed parameter trajectory plots

\paragraph{Convergence dynamics.}
Figure~\ref{fig:convergence_plots} shows the normalized RMP objective trajectory for four representative sets. We report only smoothing variants, as the penalized RMP objective does not yield the true primal bound. RLSCG(S) shows a steeper initial descent than ASCG-1 and ASCG-2, indicating faster identification of high-quality columns, and maintains a lower optimality gap throughout, yielding better solutions under early termination.

This raises a related question: can the smoothing method also benefit from predicted duals?  We investigate this in the next subsection.

\begin{figure}[htbp]
    \centering
    \captionsetup[subfigure]{justification=centering,font=scriptsize}

    \begin{subfigure}[t]{0.315\textwidth}
        \centering
        \includegraphics[width=\linewidth]{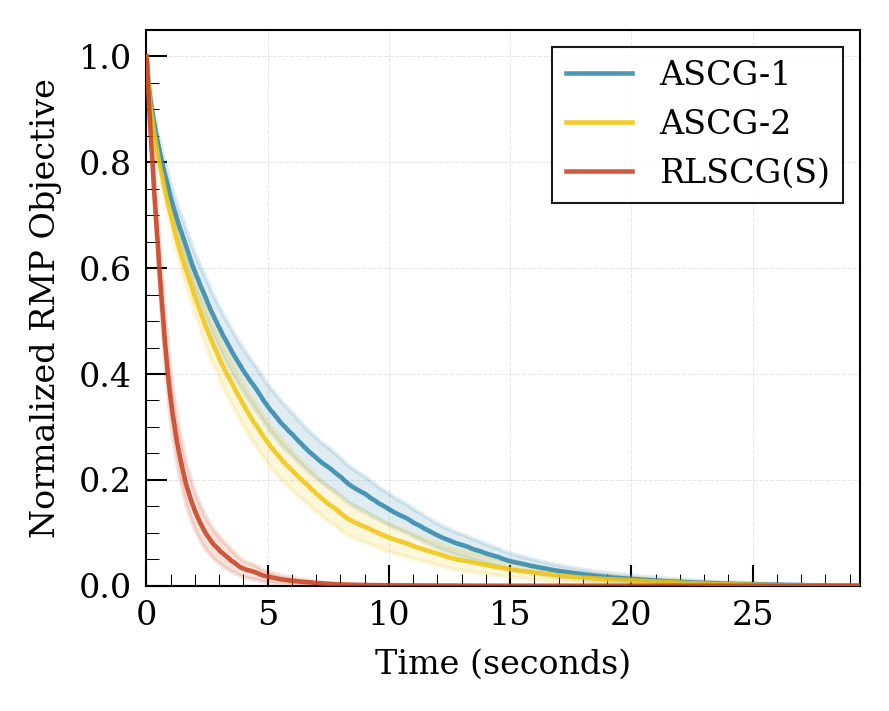}
        \caption{Gen\_1 Large}
        \label{fig:conv_gen1}
    \end{subfigure}
    \hspace{0.015\textwidth}
    \begin{subfigure}[t]{0.315\textwidth}
        \centering
        \includegraphics[width=\linewidth]{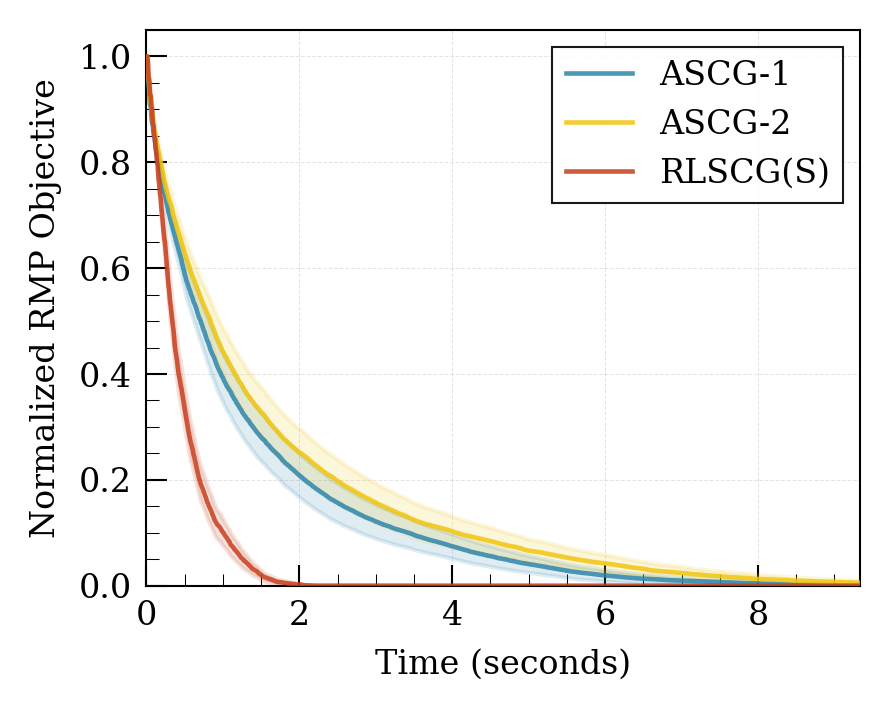}
        \caption{Gen\_2 Large}
        \label{fig:conv_gen2}
    \end{subfigure}

    \vspace{0.3em}

    \begin{subfigure}[t]{0.315\textwidth}
        \centering
        \includegraphics[width=\linewidth]{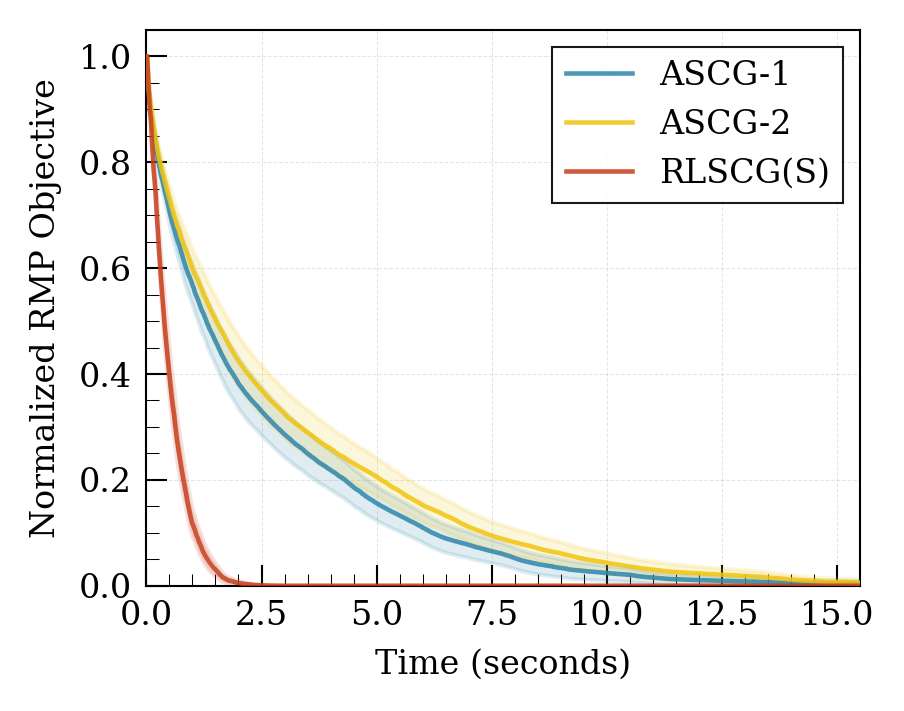}
        \caption{Gen\_3 Large}
        \label{fig:conv_gen3}
    \end{subfigure}
    \hspace{0.015\textwidth}
    \begin{subfigure}[t]{0.315\textwidth}
        \centering
        \includegraphics[width=\linewidth]{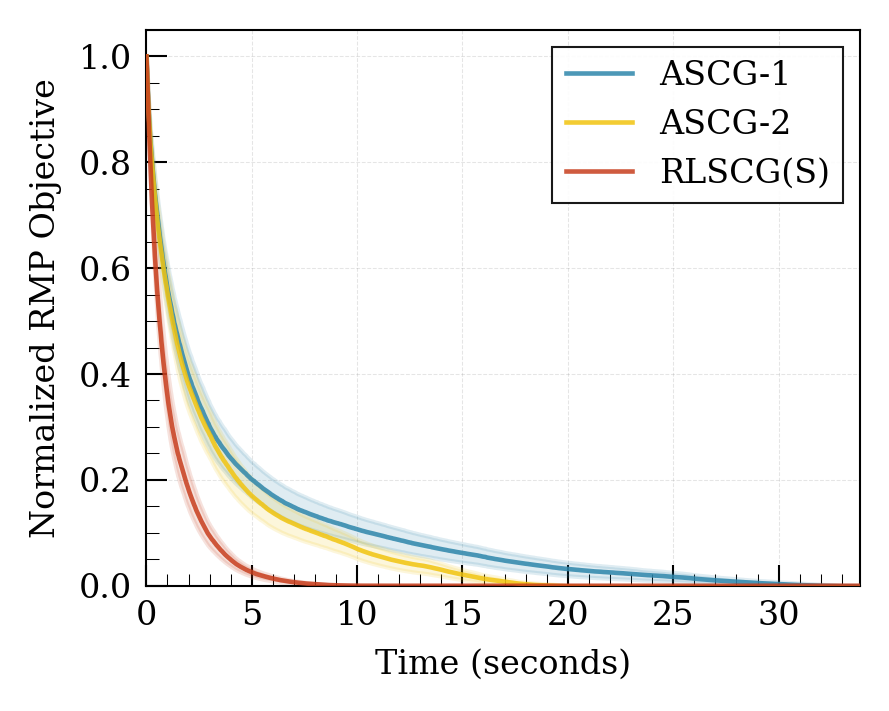}
        \caption{Falkenauer\_T}
        \label{fig:conv_gen4}
    \end{subfigure}

    \caption{(Color Online) Convergence plots for ASCG-1, ASCG-2, and RLSCG(S). The solid curves show mean normalized objective values, and the shaded areas show $\pm$1 standard deviation.}
    \label{fig:convergence_plots}
\end{figure}

\subsection{Impact of Predicted Duals as Reference Points}
\label{subsec:predicted_dual_smoothing}

Using fixed predicted duals may compromise convergence in smoothing. We therefore adopt the hybrid strategy of Proposition~\ref{prop:smooth-fix-hybrid}: the algorithm initially uses the predicted dual as the reference, then reverts to standard updates (e.g., the incumbent best dual). As dual prediction is not our focus, we refer to \cite{kraul2023machine,shen2024adaptive} for details.

We augment ASCG-1, ASCG-2, and RLSCG(S) with two hybrid reference variants:
\begin{itemize}
    \item \textit{Static Predicted Reference (SPR)}: The predicted dual serves as the reference solely at the first iteration before the algorithm switches to standard updates.
    \item \textit{Adaptive Predicted Reference (APR)}: The predicted dual is retained until stagnation (defined as a relative objective improvement $<10^{-3}$ for 5 consecutive iterations), after which standard updates resume.
\end{itemize}

% \edit{Table~\ref{tab:impact_time} reports computation times for large instances; detailed boxplots are in Appendix~\ref{appendix: Predicted Duals in Smoothing}. We highlight three findings. First, predicted duals can substantially improve rule-based methods, reducing computation times by up to 78\% (e.g., ASCG-2 on Gen\_3), though these gains are not universal across all test sets. 
% Second, the impact on RLSCG(S) is mixed: while it benefits on some instances (e.g., Gen\_1 and Gen\_3), its performance slightly degrades on Gen\_2. This suggests that the learned policy is already effective, leaving little room for further improvement by leveraging predicted duals.
% Third, no predicted-reference strategy dominates uniformly, implying that the optimal switching point is highly instance-dependent.}

% \edit{
Table~\ref{tab:impact_time} reports computation times for large instances; detailed boxplots are in Appendix~\ref{appendix: Predicted Duals in Smoothing}. We highlight three findings. First, predicted duals can substantially improve rule-based methods, reducing computation times by up to 78\% (e.g., ASCG-2 on Gen\_3), though these gains are not universal across all test sets. Second, the impact on RLSCG(S) is mixed: predicted references help on Gen\_1 and Gen\_3 (up to -19\%) but yield little to no improvement on Gen\_2 and Gen\_4. This suggests that the learned policy is already effective, leaving little room for further improvement by leveraging predicted duals. Third, no predicted-reference strategy dominates uniformly, implying that the optimal switching point is highly instance-dependent.
% }

\begin{table}[htbp]
\centering
\scriptsize
\caption{Impact of predicted-dual reference strategies on computation time (s) for large-scale instances. 
Percentages indicate the relative change in computation time compared with the corresponding \textit{Standard} variant.}
\label{tab:impact_time}
\begin{tabular}{ccc c c}
\toprule
\textbf{Test Set}& \textbf{Method} & \textbf{Standard}& \textbf{Static Predicted Reference (SPR)}& \textbf{Adaptive Predicted Reference (APR)}\\
\midrule
% --- Gen 1 Large ---
\multirow{3}{*}{\shortstack[l]{Gen\_1\\Large}} & ASCG-1& 45.84 & 45.83 (0\%) & \textbf{11.23} \textbf{(-76\%)} \\
 & ASCG-2& 12.84 & \textbf{5.52} \textbf{(-57\%)} & 5.71 (-56\%) \\
 & RLSCG(S)& 4.45 & 4.35 (-2\%) & \textbf{3.59} \textbf{(-19\%)} \\
\midrule
% --- Gen 2 Large ---
\multirow{3}{*}{\shortstack[l]{Gen\_2\\Large}} & ASCG-1& 5.04 & 5.03 (0\%) & \textbf{5.00} \textbf{(-1\%)} \\
 & ASCG-2& 4.15 & 4.15 (0\%) & \textbf{3.09} \textbf{(-26\%)} \\
 & RLSCG(S)& 1.15 & \textbf{1.11} \textbf{(-3\%)} & 1.40 (+22\%) \\
\midrule
% --- Gen 3 Large ---
\multirow{3}{*}{\shortstack[l]{Gen\_3\\Large}} & ASCG-1& 7.83 & 7.87 (+1\%) & \textbf{2.76} \textbf{(-65\%)} \\
 & ASCG-2& 7.51 & 1.82 (-76\%) & \textbf{1.67} \textbf{(-78\%)} \\
 & RLSCG(S)& 1.71 & 1.62 (-5\%) & \textbf{1.38} \textbf{(-19\%)} \\
\midrule
% --- Gen 4 Large ---
\multirow{3}{*}{\shortstack[l]{Gen\_4\\Large}} & ASCG-1& 3.85 & 3.84 (0\%) & \textbf{3.78} \textbf{(-2\%)} \\
 & ASCG-2& 4.07 & \textbf{3.72} \textbf{(-9\%)} & 3.83 (-6\%) \\
 & RLSCG(S)& \textbf{2.70} & \textbf{2.70} \textbf{(0\%)} & 2.77 (+3\%) \\
\bottomrule
\end{tabular}
\end{table}

%\edit{A natural question is whether a good reference point alone closes the gap with the learned policy. On Gen\_1 Large (Figure~\ref{fig:ws_impact_gen1}), it substantially narrows it: predicted ASCG-2 (5.52s) becomes competitive with standard RLSCG(S) (4.71s). However, on Gen\_2 Large (Figure~\ref{fig:ws_impact_gen2}), the gap remains pronounced: RLSCG(S) (1.19s) is nearly 3.5 times faster than predicted ASCG-2 (4.15s). This highlights that while predicted references improve rule-based methods, the learned policy provides more consistent gains across diverse instances.}

% \edit{
A natural question is whether a good reference point alone closes the gap with the learned policy. On Gen\_1 Large (Figure~\ref{fig:ws_impact_gen1}), predicted ASCG-2 narrows the gap to within ${\sim}24\%$ of standard RLSCG(S). However, on Gen\_2 Large (Figure~\ref{fig:ws_impact_gen2}), the gap remains pronounced: RLSCG(S) is nearly 3.6 times faster than predicted ASCG-2. This highlights that while predicted references improve rule-based methods, the learned policy provides more consistent gains across diverse instances.
% }

\begin{figure}[htbp]

    \centering
   
    \begin{subfigure}[b]{0.265\textwidth}
        \centering

        \includegraphics[width=\textwidth]{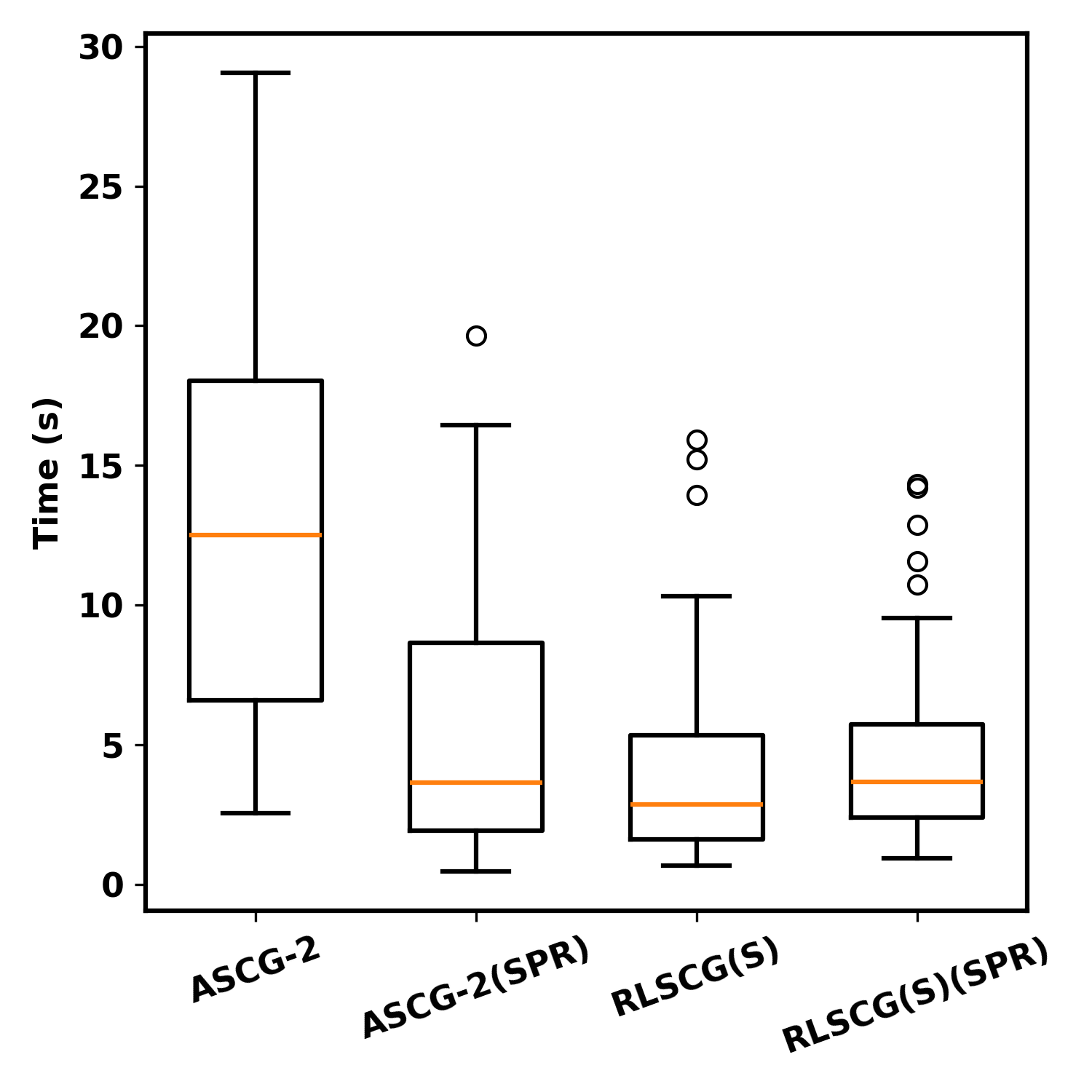}
        \caption{Gen\_1 Large}
        \label{fig:ws_impact_gen1}
    \end{subfigure}
    % \hfill 
    \begin{subfigure}[b]{0.265\textwidth}
        \centering
        \includegraphics[width=\textwidth]{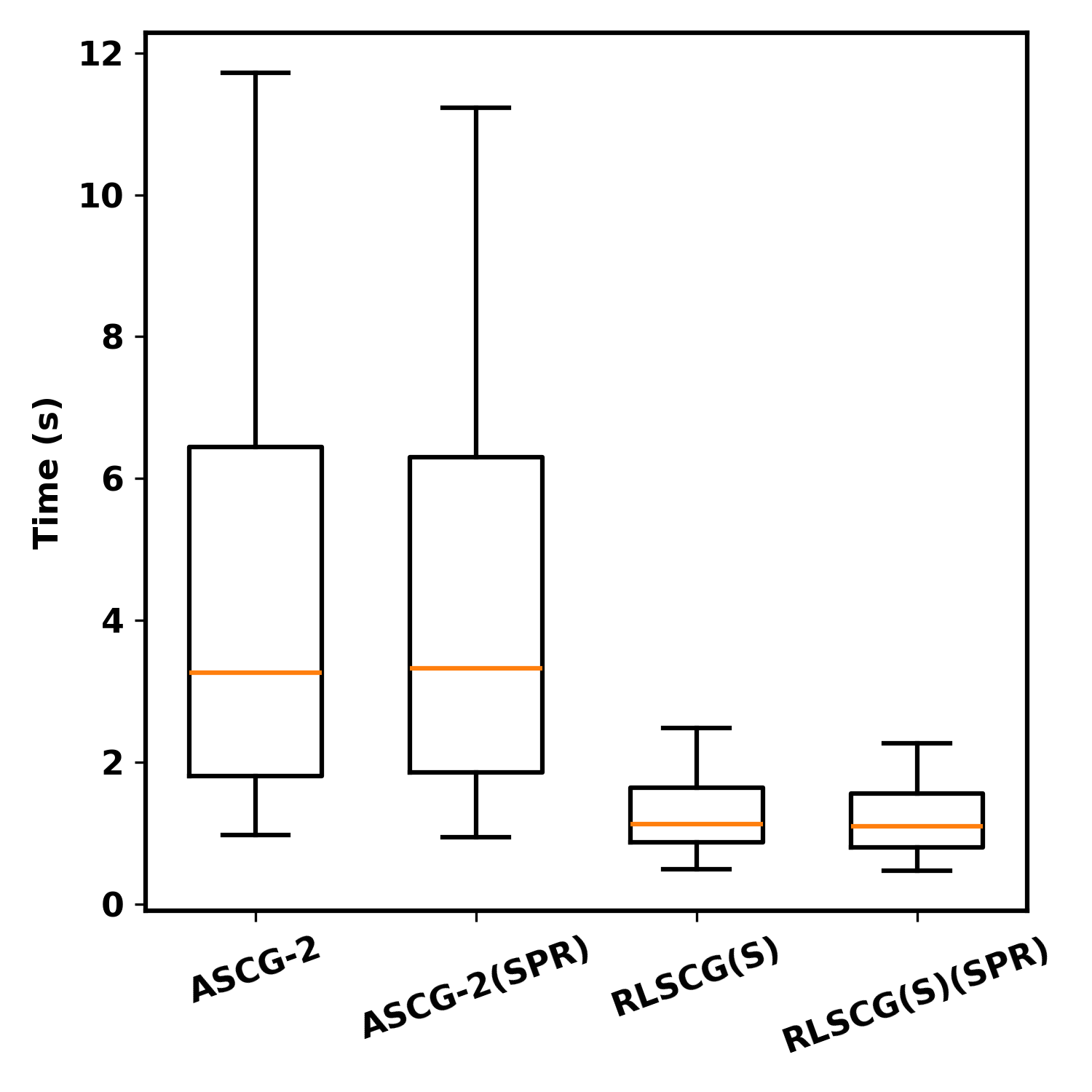}
        \caption{Gen\_2 Large}
        \label{fig:ws_impact_gen2}
    \end{subfigure}
    \caption{Comparison of computation time distributions between ASCG-2 and RLSCG(S) under Standard and Static Predicted Reference strategies. 
    % (a) In \textit{Gen\_1 Large}, enhancing ASCG with static predicted reference yields performance comparable to standard RLSCG(S). (b) In \textit{Gen\_2 Large}, RLSCG(S) maintains a significant advantage regardless of using predicted dual.
    }
    \label{fig:ws_impact}
\end{figure}

\subsection{Comparison with Learning-Based Column Selection}
\label{subsec:comparison_rlcg}

Finally, we benchmark RLSCG against \textit{RLCG} \citep{chi2022deep}, an RL-based column selection policy. We use RLSCG(S) as our representative variant given its stronger performance across the test sets. 
As shown in Table~\ref{tab:comparison_rlcg}, while RLCG sometimes requires fewer or comparable iterations, RLSCG(S) achieves lower computation time.

\begin{table}[htbp]
\centering
\tiny
\caption{Comparison between RLCG and RLSCG(S). Percentages indicate the relative change of RLSCG(S) compared to RLCG. The best performance in each metric is highlighted in bold.}
\label{tab:comparison_rlcg}
\resizebox{\textwidth}{!}{%
\begin{tabular}{ll cc cc cc cc}
\toprule
\multicolumn{2}{c}{\multirow{2}{*}{\textbf{Test Set}}}& \multicolumn{2}{c}{\textbf{Iterations}} & \multicolumn{2}{c}{\textbf{Total Time (s)}} & \multicolumn{2}{c}{\textbf{Pricing Time (s)}} & \multicolumn{2}{c}{\textbf{RMP Time (s)}} \\
\cmidrule(lr){3-4} \cmidrule(lr){5-6} \cmidrule(lr){7-8} \cmidrule(lr){9-10}
 & & \textbf{RLCG} & \textbf{RLSCG(S)}& \textbf{RLCG} & \textbf{RLSCG(S)}& \textbf{RLCG} & \textbf{RLSCG(S)}& \textbf{RLCG} & \textbf{RLSCG(S)}\\
\midrule
\multirow{3}{*}{Gen\_1} & Small & 39.1 & \textbf{39.0} (-1\%) & 7.44 & \textbf{0.64} (-91\%) & 6.78 & \textbf{0.27} & 0.46 & \textbf{0.26} \\
 & Medium & 96.1 & \textbf{80.0} (-17\%) & 21.96 & \textbf{1.52} (-93\%) & 20.02 & \textbf{0.73} & 1.19 & \textbf{0.56} \\
 & Large & 198.3 & \textbf{192.3} (-3\%) & 56.88 & \textbf{4.35} (-92\%) & 51.12 & \textbf{2.23} & 2.61 & \textbf{1.45} \\
\midrule
\multirow{3}{*}{Gen\_2} & Small & \textbf{8.3} & 12.4 (+49\%) & 1.41 & \textbf{0.20} (-86\%) & 1.29 & \textbf{0.09} & \textbf{0.09} & \textbf{0.09} \\
 & Medium & \textbf{20.6} & 22.6 (+10\%) & 3.44 & \textbf{0.38} (-89\%) & 3.09 & \textbf{0.16} & 0.22 & \textbf{0.16} \\
 & Large & \textbf{55.0} & 63.6 (+16\%) & 9.97 & \textbf{1.11} (-89\%) & 8.72 & \textbf{0.47} & 0.61 & \textbf{0.44} \\
\midrule
\multirow{3}{*}{Gen\_3} & Small & \textbf{15.3} & 17.7 (+16\%) & 2.51 & \textbf{0.29} (-89\%) & 2.30 & \textbf{0.12} & 0.15 & \textbf{0.12} \\
 & Medium & 39.5 & \textbf{37.4} (-5\%) & 6.96 & \textbf{0.64} (-91\%) & 6.33 & \textbf{0.27} & 0.39 & \textbf{0.26} \\
 & Large & \textbf{89.6} & 89.9 (+1\%) & 17.24 & \textbf{1.62} (-91\%) & 15.23 & \textbf{0.69} & 0.96 & \textbf{0.65} \\
\midrule
\multirow{3}{*}{Gen\_4} & Small & 38.8 & \textbf{37.7} (-3\%) & 9.83 & \textbf{0.66} (-93\%) & 9.17 & \textbf{0.31} & 0.47 & \textbf{0.25} \\
 & Medium & \textbf{80.7} & 80.9 (+1\%) & 28.35 & \textbf{1.59} (-94\%) & 26.65 & \textbf{0.75} & 1.06 & \textbf{0.59} \\
 & Large & 139.4 & \textbf{138.3} (-1\%) & 78.47 & \textbf{2.70} (-97\%) & 74.45 & \textbf{1.24} & 2.08 & \textbf{1.00} \\
\midrule
\multicolumn{2}{c}{Falkenauer\_U}& 156.5 & \textbf{140.7} (-10\%) & 40.40 & \textbf{2.44} (-94\%) & 37.36 & \textbf{1.02} & 1.80 & \textbf{0.99} \\
\multicolumn{2}{c}{Falkenauer\_T}& \textbf{237.1} & 259.9 (+10\%) & 65.06 & \textbf{5.42} (-92\%) & 57.37 & \textbf{2.44} & 2.99 & \textbf{2.00} \\
\multicolumn{2}{c}{Schwerin}& \textbf{44.4} & 45.3 (+2\%) & 153.36 & \textbf{0.75} (-99\%) & 152.25 & \textbf{0.32} & 0.82 & \textbf{0.31} \\
\multicolumn{2}{c}{Waescher}& 60.7 & \textbf{60.5} (-1\%) & 16.58 & \textbf{1.41} (-91\%) & 15.50 & \textbf{0.83} & 0.73 & \textbf{0.41} \\
\multicolumn{2}{c}{Hard28}& 384.1 & \textbf{377.1} (-2\%) & 146.22 & \textbf{9.84} (-93\%) & 130.86 & \textbf{5.03} & 5.44 & \textbf{3.25} \\
\bottomrule
\end{tabular}
}
\end{table}

% Results are reported in Table~\ref{tab:comparison_rlcg}.
% While RLCG generally requires fewer or comparable iterations, RLSCG(S) achieves lower total computation time on the majority of instances. 
The iteration advantage of RLCG is expected: its policy selects the most beneficial column from a candidate pool. However, generating this pool incurs pricing overhead. As shown in Table~\ref{tab:comparison_rlcg}, the pricing time for RLCG is consistently orders of magnitude higher than that of RLSCG(S), driving its substantially longer total runtimes. 
Fundamentally, the distinction lies in how learning induces pricing distortion: RLCG learns post-pricing column selection, whereas RLSCG learns pre-pricing dual deviation. Our results suggest that for large-scale CSP instances, the latter is more efficient; whether this advantage persists for harder pricing oracles present in other problem classes remains an open question.

\section{Conclusion}\label{sec:conclusion}

In this paper, we study stabilization in column generation. Our first contribution is a unified view of smoothing and penalization through \emph{pricing distortion}, the deviation between the stabilized dual and the RMP dual induced by reference points and stabilization parameters. Within this framework, we formalize mispricing, derive parameter bounds that prevent it, and show that these bounds depend on global quantities expensive to evaluate online. This, in turn, motivates our second contribution, RLSCG, a reinforcement learning-guided framework that adaptively selects stabilization parameters from structural and algorithmic information encoded by a graph neural network. 
On the Cutting Stock Problem, RLSCG substantially reduces iteration count and computation time on most instances relative to traditional CG, rule-based stabilization, and learning-based column selection, with the largest gains on large-scale instances. 
% \edit{
Notably, rule-based stabilization can in fact slow convergence on these instances, highlighting the importance of learned parameter control.
% } 
The comparison with RLCG suggests that learning stabilization outperforms column selection for this problem class.

Future work includes testing RLSCG on problems with harder pricing subproblems (e.g., vehicle routing, crew scheduling) and extending it to multi-column addition by jointly controlling stabilization strength and the number of columns per iteration.

% \section*{Acknowledgments}
% This was was supported in part by......

%Bibliography
% \bibliographystyle{unsrt}  
% \bibliography{references}  

% \begingroup
% \setstretch{1.5} % 
% \small % 
% \bibliographystyle{unsrt}  
% \bibliography{references}
% \endgroup

\bibliographystyle{unsrt}  
\bibliography{references}

\newpage

% Acknowledgments here
% \ACKNOWLEDGMENT{%
% Enter the text of acknowledgments here
% }% Leave this (end of acknowledgment)

% Appendix here
% Options are (1) APPENDIX (with or without general title) or 
%             (2) APPENDICES (if it has more than one unrelated sections)
% Outcomment the appropriate case if necessary
%
% \begin{APPENDIX}{<Title of the Appendix>}
% \end{APPENDIX}
%
%   or 
%
% \begin{APPENDICES}

% \newpage
\appendix

\section{Supplementary Analysis}

\subsection{Penalization Formulations}\label{appendix:alternative_penalization}

In Section \ref{subsec:prelim} and \ref{subsec:stabilization framework}, we detail the single-point polyhedral penalization. Here, we present two other common penalization formulations and their corresponding operators within our generalized stabilization framework.

\textit{Interval Polyhedral Penalty.}
Instead of a single reference point, one may define a trust region box.

\begin{itemize}
    \item Parameters: $\theta_t = \{\varepsilon_t^-, \varepsilon_t^+,\xi\}$ with $\varepsilon_t^-, \varepsilon_t^+, \xi \ge 0$, and $\delta_t^- = \pi^{\mathrm{ref}}_t-\xi$, $\delta_t^+ = \pi^{\mathrm{ref}}_t+\xi$, where $\xi$ represents the box size.
    \item Operator: Let $(\pi_t^{\mathrm{sep}}, w_t^-, w_t^+)$ be an optimal solution to the following problem

% \begin{equation}\label{eq:f_pen2}
% \begin{aligned}
% \max_{\pi\in\mathbb{R}^m,\, w^-\in\mathbb{R}^m_+,\, w^+\in\mathbb{R}^m_+}\quad
% & b^\top\pi - (\varepsilon_t^-)^\top w^- - (\varepsilon_t^+)^\top w^+ \\
% \text{s.t.}\quad
% & A_R^\top\pi \le c_R, \\
% & \pi - w^+ \le \pi_t^{\mathrm{ref}}+\xi, \\
% & -\pi - w^- \le -(\pi_t^{\mathrm{ref}}-\xi) .
% \end{aligned}
% \end{equation}
\begin{equation}\label{eq:f_pen2}
\begin{aligned}
\max_{\pi\in\mathbb{R}^m,\; w^-\in\mathbb{R}^m_+,\; w^+\in\mathbb{R}^m_+}\quad
& b^\top\pi - (\varepsilon_t^-)^\top w^- - (\varepsilon_t^+)^\top w^+ \\
\text{s.t.}\quad
& A_R^\top\pi \le c_R,\;
\pi - w^+ \le \pi_t^{\mathrm{ref}}+\xi,\;
-\pi - w^- \le -(\pi_t^{\mathrm{ref}}-\xi).
\end{aligned}
\end{equation}
The stabilized dual vector is given by $f_t^{\mathrm{pen}}(\pi_t^{\mathrm{ref}}, \theta_t; \mathcal{I}_t) = \pi_t^{\mathrm{sep}}$.
\end{itemize}
This creates a three-piece linear penalization function: deviations within $[\pi^{\mathrm{ref}}_t-\xi, \pi^{\mathrm{ref}}_t+\xi]$ incur no penalty, while deviations outside are penalized linearly.

\textit{Euclidean–Proximal Penalty.}
This formulation uses a quadratic penalty term to penalize the Euclidean distance between the dual solution and the reference center $\pi^{\mathrm{center}}_t$.
\begin{itemize}
    \item Parameters: $\theta_t = \{\varepsilon\}$ with $\varepsilon > 0$, and $\pi_t^{\mathrm{ref}}$ is the box center.
    \item Operator: 
Let $\pi_t^{\mathrm{sep}}$ be the unique optimal solution to:
\begin{equation}\label{eq:f_pen3}
\begin{aligned}
\max_{\pi\in\mathbb{R}^m}\quad
& b^\top \pi - \varepsilon\|\pi - \pi_t^{\mathrm{ref}}\|^2_2 \\
\text{s.t.}\quad
& A_R^\top\pi \le c_R.
\end{aligned}
\end{equation}
The stabilized dual vector is given by $f_t^{\mathrm{pen}}(\pi_t^{\mathrm{ref}}, \theta_t; \mathcal{I}_t) = \pi_t^{\mathrm{sep}}$.
\end{itemize}

\subsection{Detailed Calculations for the Toy Example} \label{app:toy_details}

We outline the column generation steps for the illustrative example in Section~\ref{sec:pricing_distortion}. At iteration $t$, the pricing problem is $\min_{q \in \{A,B,C\}} r_q(\pi)$, where $r_q(\pi) = c_q - a_q^\top \pi$. 
Note that for the weak and strong distortion regimes, the separation points $\pi_t^{\mathrm{sep}}$ are intentionally selected to demonstrate the distinct algorithmic behaviors under different pricing strategies.

\paragraph{Standard Pricing (Figure~\ref{fig:toy_pricing_steps}a).}
Standard pricing always selects the inequality with the most negative reduced cost at $\pi_t^{\mathrm{out}}$. 
% This is locally optimal for the current restricted problem, but it may introduce intermediate inequalities that are not needed to define the final optimal vertex.
% Pricing is performed at the RMP optimum, $\pi_t^{\mathrm{out}}$.
\begin{itemize}
    \item \textit{Iteration $t=0$:} $\pi_0^{\mathrm{out}} = (5,5)$. Pricing yields Cut $A$.
    \item \textit{Iteration $t=1$:} $\pi_1^{\mathrm{out}} = (5, 1)$. Pricing yields Cut $B$.
    \item \textit{Iteration $t=2$:} $\pi_2^{\mathrm{out}} = (11/3, 7/3)$. Pricing yields Cut $C$.
    \item \textit{Iteration $t=3$:} The RMP optimum reaches $\pi^\star = (3.5, 2.5)$. All remaining reduced costs are non-negative. Convergence.
\end{itemize}

\paragraph{Weak Pricing Distortion (Figure~\ref{fig:toy_pricing_steps}b).}
Weak distortion prices at a stabilized dual rather than at $\pi_t^{\mathrm{out}}$. The selected cut is no longer the greedy one for the current RMP, but it still separates $\pi_t^{\mathrm{out}}$.
% Pricing is performed at a stabilized dual $\pi_t^{\mathrm{sep}}$.
\begin{itemize}
    \item \textit{Iteration $t=0$:} Using $\pi_0^{\mathrm{sep}} = \pi_0^{\mathrm{out}} = (5,5)$, pricing yields Cut $A$.
    \item \textit{Iteration $t=1$:} $\pi_1^{\mathrm{out}} = (5,1)$. Pricing at the smoothed dual $\pi_1^{\mathrm{sep}} = 0.5\pi_0^{\mathrm{sep}} + 0.5\pi_1^{\mathrm{out}} = (5, 3)$ yields Cut $C$. Since $r_C(\pi_1^{\mathrm{out}}) < 0$, this cut separates $\pi_1^{\mathrm{out}}$.
    \item \textit{Iteration $t=2$:} The RMP optimum reaches $\pi^\star = (3.5, 2.5)$. Convergence.
\end{itemize}

\paragraph{Strong Pricing Distortion (Figure~\ref{fig:toy_pricing_steps}c).}
Strong distortion generate a cut that is not separating for $\pi_t^{\mathrm{out}}$, producing a mispricing event and no immediate objective improvement. Nevertheless, the added inequality may reshape the restricted feasible region in a way that improves the subsequent search path.
% Aggressive exploration is induced by selecting a fixed center initially.
\begin{itemize}
    \item \textit{Iteration $t=0$:} $\pi_0^{\mathrm{out}} = (5,5)$. Pricing at the selected point $\pi_0^{\mathrm{sep}} = (3.6, 2.4)$ yields Cut $C$. However, $r_C(\pi_0^{\mathrm{out}}) > 0$, meaning Cut $C$ does not separate $\pi_0^{\mathrm{out}}$. A \textit{mispricing} event occurs, as illustrated in Panel $t=0$ of Figure~\ref{fig:toy_pricing_steps}(c).
    \item \textit{Iteration $t=1$:} $\pi_1^{\mathrm{out}} = (5,5)$. Pricing at the selected point $\pi_1^{\mathrm{sep}} = 0.5\pi_0^{\mathrm{sep}} + 0.5\pi_1^{\mathrm{out}} = (4.3, 3.7)$. Pricing at $\pi_1^{\mathrm{sep}}$ yields Cut $A$. This is a valid separating cut since $r_A(\pi_1^{\mathrm{out}}) < 0$.
    \item \textit{Iteration $t=2$:} The RMP optimum reaches $\pi^\star = (3.5, 2.5)$. Convergence.
\end{itemize}

\subsection{Restricted Equivalence of Smoothing and Euclidean-Proximal Penalization}
\label{sec:eq-smooth-prox}

We establish a condition under which the Euclidean-proximal penalization operator \eqref{eq:f_pen3} is equivalent to the smoothing operator \eqref{eq:f_smooth}.

% Let $D_R=\{\pi\in\mathbb{R}^m: A_R^\top \pi\le c_R\}$ be the nonempty, closed, and convex dual feasible set of the RMP. Fix $\pi_t^{\mathrm{ref}}\in D_R$, and let $\pi_t^{\mathrm{out}}\in\arg\max_{\pi\in D_R} b^\top\pi$ be an optimal dual solution to the RMP.
% % \footnote{If the RMP dual solution is not unique, $\pi_t^{\mathrm{out}}$ can be any selection from the set of optimal solutions.}
% Define the direction vector $d=\pi_t^{\mathrm{out}}-\pi_t^{\mathrm{ref}}$. We assume $d\neq 0$ and $b^\top d > 0$, which implies that $\pi_t^{\mathrm{out}}$ has a strictly better RMP objective value than $\pi_t^{\mathrm{ref}}$.
Let
$
D_R=\{\pi\in\mathbb{R}^m: A_R^\top \pi\le c_R\}
$
be the nonempty, closed, convex dual feasible set of the RMP. Fix $\pi_t^{\mathrm{ref}}\in D_R$, and let
$
\pi_t^{\mathrm{out}}\in\arg\max_{\pi\in D_R} b^\top\pi
$
be an optimal RMP dual solution. Define
$
d=\pi_t^{\mathrm{out}}-\pi_t^{\mathrm{ref}},
$
and assume $d\neq 0$ and $b^\top d>0$.
We further impose the geometric condition that $b$ is positively collinear with $d$, i.e.,
\begin{equation}
    b = \kappa\, d \quad \text{for some scalar } \kappa > 0.
    \label{eq:geom-condition}
\end{equation}
% This condition implies that the RMP objective $b^\top \pi$ increases most steeply exactly in the direction from the reference point $\pi_t^{\mathrm{ref}}$ to the RMP optimum $\pi_t^{\mathrm{out}}$. 
Geometrically, as shown in Figure~\ref{fig:prox-geom-equiv}, when $b$ collines positively with $d$, the objective $b^\top \pi$ and the penalty $-\varepsilon\|\pi - \pi_t^{\mathrm{ref}}\|_2^2$ create pulls along the same axis. Consequently, the entire path of Euclidean-proximal solutions $\{\pi_t^{\mathrm{pen}}(\varepsilon)\}_{\varepsilon>0}$ (generated by varying $\varepsilon$) lies exactly on the smoothing segment $[\pi_t^{\mathrm{ref}}, \pi_t^{\mathrm{out}}]$.  The resulting equivalence is stated below.
% The task of maximizing the strongly concave function $b^\top \pi - \varepsilon\|\pi - \pi_t^{\mathrm{ref}}\|_2^2$ over the entire convex set $D_R$ thus reduces to a one-dimensional optimization problem along the line segment $[\pi_t^{\mathrm{ref}}, \pi_t^{\mathrm{out}}]$.

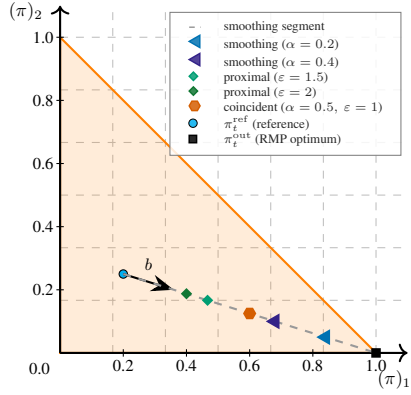
\begin{figure}[htbp]
    \centering
    % ==== Color-blind friendly palette ====
    \definecolor{colorSmoothA}{HTML}{0072B2}   % Blue   (alpha=0.2)
    \definecolor{colorSmoothB}{HTML}{332288}   % Indigo (alpha=0.4)
    \definecolor{colorProxA}{HTML}{009E73}     % Green  (eps=1.5)
    \definecolor{colorProxB}{HTML}{117733}     % Darker Green (eps=2)
    \definecolor{colorEquiv}{HTML}{D55E00}     % Vermillion (coincident)

    % ==== Styles ====
    \tikzstyle{gridstyle}  = [dashed, color=gray!50, thin]
    % smoothing points: triangles
    \tikzstyle{smoothnode} = [draw, regular polygon, regular polygon sides=3, inner sep=1.5pt, thick, scale=0.9, rotate=90]
    % proximal points: diamonds
    \tikzstyle{proxnode}   = [draw, diamond, inner sep=1.5pt, thick, scale=0.8]
    % coincident point: hexagon
    \tikzstyle{equivnode}  = [draw, regular polygon, regular polygon sides=6, inner sep=1.8pt, thick, scale=1.0]

    % \begin{tikzpicture}[x=6cm, y=6cm]
    \begin{tikzpicture}[x=6cm, y=6cm, scale=0.7, transform shape]
        % 1) grid
        \draw[gridstyle] (0,0) grid (1.1, 1.1);

        % 2) feasible set D_R (triangle)
        \fill[color=orange, opacity=0.15] (0,0) -- (1,0) -- (0,1) -- cycle;
        \draw[color=orange, thick] (0,0) -- (1,0) -- (0,1) -- cycle;

        % 3) axes
        \draw[->, thick] (0,0) -- (1.1, 0) node[below left, yshift=-2mm, xshift=2mm] {$(\pi)_1$};
        \draw[->, thick] (0,0) -- (0, 1.1) node[below left, xshift=-2mm, yshift=2mm] {$(\pi)_2$};
        \foreach \i in {0.2, 0.4, 0.6, 0.8, 1.0} {
            \draw (\i, -2pt) -- (\i, 2pt) node[below, font=\small, yshift=-2pt] {\i};
            \draw (-2pt, \i) -- (2pt, \i) node[left,  font=\small, xshift=-2pt] {\i};
        }
        \node[below left, font=\small, xshift=-2pt, yshift=-2pt] at (0,0) {0.0};

        % 4) key points
        \coordinate (piref) at (0.20, 0.25);
        \coordinate (piout) at (1.00, 0.00);

        % 5) reference and b (with b || d)
        \node[draw, circle, fill=cyan, inner sep=1.6pt] at (piref) {};
        % choose an endpoint to avoid overlap
        \draw[->, thick, -{Stealth[length=3mm]}] (piref) -- node[above, font=\small, yshift=1mm] {$b$} (0.36, 0.20);

        % smoothing segment
        \draw[dashed, thick, color=gray!80] (piref) -- (piout);
        % RMP optimum
        \node[draw, rectangle, inner sep=2pt, thick, fill=black] at (piout) {};

        % ===== Points on the segment:  pi(t) = (0.2 + 0.8 t , 0.25 - 0.25 t) =====
        % Proximal eps=1.5 -> t=1/(2*1.5)=1/3 -> pi=(0.4667, 0.1667)
        \coordinate (p_proxA) at (0.4667, 0.1667);
        \node[proxnode, fill=colorProxA, draw=colorProxA] at (p_proxA) {};
        % \node[font=\small, anchor=south west] at (0.472, 0.168) ;

        % Proximal eps=2 -> t=1/4 -> pi=(0.4, 0.1875)
        \coordinate (p_proxB) at (0.40, 0.1875);
        \node[proxnode, fill=colorProxB, draw=colorProxB] at (p_proxB) {};
        % \node[font=\small, anchor=south west] at (0.405, 0.189) {$\varepsilon=2,\ t=\tfrac{1}{4}$};

        % Coincident: alpha=0.5 <-> eps=1 -> t=0.5 -> pi=(0.6, 0.125)
        \coordinate (p_coinc) at (0.60, 0.125);
        \node[equivnode, fill=colorEquiv, draw=colorEquiv] at (p_coinc) {};
        % \node[font=\small, anchor=south west] at (0.605, 0.126) {$\alpha=0.5,\ \varepsilon=1,\ t=0.5$};

        % Smoothing alpha=0.4 -> t=0.6 -> pi=(0.68, 0.10)
        \coordinate (p_smoothB) at (0.68, 0.10);
        \node[smoothnode, fill=colorSmoothB, draw=colorSmoothB] at (p_smoothB) {};
        % \node[font=\small, anchor=south west] at (0.685, 0.101) {$\alpha=0.4,\ t=0.6$};

        % Smoothing alpha=0.2 -> t=0.8 -> pi=(0.84, 0.05)
        \coordinate (p_smoothA) at (0.84, 0.05);
        \node[smoothnode, fill=colorSmoothA, draw=colorSmoothA] at (p_smoothA) {};
        % \node[font=\small, anchor=south west] at (0.845, 0.051) {$\alpha=0.2,\ t=0.8$};

        % Legend (top-right)
        \node[anchor=north east, fill=white, fill opacity=0.85, draw=gray, inner sep=4pt] at (1.08, 1.08) {%
            % \small
            \scriptsize
            \begin{tabular}{ll}
                \tikz[baseline=-0.5ex]{\draw[dashed, thick, color=gray!80] (0,0) -- (0.22cm, 0);} & smoothing segment \\
                \tikz{\node[smoothnode, fill=colorSmoothA, draw=colorSmoothA] {};} & smoothing ($\alpha=0.2$) \\
                \tikz{\node[smoothnode, fill=colorSmoothB, draw=colorSmoothB] {};} & smoothing ($\alpha=0.4$) \\
                \tikz{\node[proxnode,   fill=colorProxA,   draw=colorProxA]   {};} & proximal ($\varepsilon=1.5$) \\
                \tikz{\node[proxnode,   fill=colorProxB,   draw=colorProxB]   {};} & proximal ($\varepsilon=2$) \\
                \tikz{\node[equivnode,  fill=colorEquiv,   draw=colorEquiv]   {};} & coincident ($\alpha=0.5,\ \varepsilon=1$) \\
                \tikz{\node[draw, circle, fill=cyan, inner sep=1.6pt] {};}     & $\pi_t^{\mathrm{ref}}$ (reference) \\
                \tikz{\node[draw, rectangle, inner sep=2pt, thick, fill=black] {};} & $\pi_t^{\mathrm{out}}$ (RMP optimum)
            \end{tabular}
        };
    \end{tikzpicture}

    \caption{Geometric illustration of the equivalence under the collinearity condition ($b \parallel d=\pi_t^{\mathrm{out}}-\pi_t^{\mathrm{ref}}$). The shaded triangle is the dual feasible set $D_R=\{(x,y): x\ge0,\ y\ge0,\ x+y\le1\}$. We use $\pi_t^{\mathrm{ref}}=(0.20,0.25)$, $\pi_t^{\mathrm{out}}=(1,0)$, and set $b=d$ to satisfy Proposition~\ref{prop:prox-smooth-equiv}. The dashed segment is the smoothing path $\{\alpha\,\pi_t^{\mathrm{ref}}+(1-\alpha)\pi_t^{\mathrm{out}}:\alpha\in[0,1]\}$. As the proximal strength $\varepsilon$ \emph{decreases}, the proximal solution $\pi_t^{\mathrm{pen}}(\varepsilon)$ (diamonds) moves \emph{from} $\pi_t^{\mathrm{ref}}$ \emph{towards} $\pi_t^{\mathrm{out}}$ along this segment and saturates at $\pi_t^{\mathrm{out}}$; as $\varepsilon$ \emph{increases}, it moves back towards $\pi_t^{\mathrm{ref}}$. Symmetrically, as $\alpha$ \emph{decreases}, the smoothing point (triangles) travels from $\pi_t^{\mathrm{ref}}$ to $\pi_t^{\mathrm{out}}$, and as $\alpha$ \emph{increases} it returns to $\pi_t^{\mathrm{ref}}$. The hexagon marks a coincident point where the two methods agree exactly ($\alpha=0.5,\ \varepsilon=1,\ t=0.5$).}
    \label{fig:prox-geom-equiv}
\end{figure}

% This equivalence is formalized in the following proposition.

\begin{proposition}
\label{prop:prox-smooth-equiv}
Let $\pi_t^{\mathrm{pen}} $ be the unique solution to the Euclidean-proximal penalization problem \eqref{eq:f_pen3} with $\theta_t = \{\varepsilon\}$.
Under \eqref{eq:geom-condition}, $\pi_t^{\mathrm{pen}}$ lies on the line segment $[\pi_t^{\mathrm{ref}}, \pi_t^{\mathrm{out}}]$ and satisfies 
\begin{equation}
\label{eq:prox-equals-smooth}
\pi_t^{\mathrm{pen}}
=\alpha_t\,\pi_t^{\mathrm{ref}}+(1-\alpha_t)\,\pi_t^{\mathrm{out}}
\end{equation}
where the smoothing parameter $\alpha_t$ is determined by the penalization parameter $\varepsilon$ as
\begin{equation}
\label{eq:eps-to-alpha-mapping}
\alpha_t \;=\; \max\left\{0, \ 1-\frac{b^\top d}{2\,\varepsilon\,\|d\|_2^2}\right\}.
\end{equation}
Conversely, for any target smoothing parameter $\alpha_t\in[0,1)$, choosing the penalization parameter $\varepsilon$ such that
\begin{equation}
\label{eq:alpha-to-eps-mapping}
\varepsilon \;=\; \frac{b^\top d}{2(1-\alpha_t)\,\|d\|_2^2}.
\end{equation}
% ensures that the penalization operator \eqref{eq:f_pen3} and the smoothing operator \eqref{eq:f_smooth} coincide.
\end{proposition}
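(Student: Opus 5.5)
The plan is to rewrite the proximal objective as a Euclidean projection and then read off the solution along the ray from $\pi_t^{\mathrm{ref}}$ in direction $d$. Completing the square gives
\[
b^\top\pi-\varepsilon\|\pi-\pi_t^{\mathrm{ref}}\|_2^2
=-\varepsilon\Big\|\pi-\Big(\pi_t^{\mathrm{ref}}+\tfrac{1}{2\varepsilon}b\Big)\Big\|_2^2+\text{const}.
\]
Hence $\pi_t^{\mathrm{pen}}$ is the Euclidean projection $P_{D_R}(z)$ of $z:=\pi_t^{\mathrm{ref}}+b/(2\varepsilon)$ onto the nonempty closed convex set $D_R$. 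This also confirms that the solution of \eqref{eq:f_pen3} exists and is unique.

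Next I use \eqref{eq:geom-condition}. Taking the inner product of $b=\kappa d$ with $d$ gives $\kappa=b^\top d/\|d\|_2^2>0$. Therefore
\[
z=\pi_t^{\mathrm{ref}}+s\,d,\qquad s:=\frac{\kappa}{2\varepsilon}=\frac{b^\top d}{2\varepsilon\|d\|_2^2}>0 .
\]
I then split into two cases according to $s$.

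\emph{Case $s\le 1$.} Here $z=(1-s)\pi_t^{\mathrm{ref}}+s\,\pi_t^{\mathrm{out}}$ is a convex combination of two points of $D_R$. Since $D_R$ is convex, $z\in D_R$, and so $P_{D_R}(z)=z$.

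\emph{Case $s>1$.} This is the step needing an actual argument, since the unconstrained target leaves the segment. I claim $P_{D_R}(z)=\pi_t^{\mathrm{out}}$, and verify it through the projection variational inequality $(z-\pi_t^{\mathrm{out}})^\top(\pi-\pi_t^{\mathrm{out}})\le 0$ for all $\pi\in D_R$. We have $z-\pi_t^{\mathrm{out}}=(s-1)d=\tfrac{s-1}{\kappa}\,b$, with $\tfrac{s-1}{\kappa}>0$. So the inequality reduces to $b^\top(\pi-\pi_t^{\mathrm{out}})\le 0$, which is exactly the optimality of $\pi_t^{\mathrm{out}}$ for $\max_{\pi\in D_R}b^\top\pi$.

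Combining the two cases gives $\pi_t^{\mathrm{pen}}=\pi_t^{\mathrm{ref}}+\min\{s,1\}\,d=\alpha_t\pi_t^{\mathrm{ref}}+(1-\alpha_t)\pi_t^{\mathrm{out}}$ with $\alpha_t=1-\min\{s,1\}=\max\{0,1-s\}$. This is the point on $[\pi_t^{\mathrm{ref}},\pi_t^{\mathrm{out}}]$ required by \eqref{eq:prox-equals-smooth}, with $\alpha_t$ as in \eqref{eq:eps-to-alpha-mapping}.

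For the converse, fix $\alpha_t\in[0,1)$ and set $s=1-\alpha_t\in(0,1]$. Solving $s=b^\top d/(2\varepsilon\|d\|_2^2)$ for $\varepsilon$ gives \eqref{eq:alpha-to-eps-mapping}, and $\varepsilon>0$ because $b^\top d>0$. Since $s\le1$, the first case applies and returns exactly the smoothing point $\alpha_t\pi_t^{\mathrm{ref}}+(1-\alpha_t)\pi_t^{\mathrm{out}}$.
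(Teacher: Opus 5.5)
Your proof is correct, and it follows a different route from the paper's.

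\textbf{The paper's route.} The paper decomposes $\pi-\pi_t^{\mathrm{ref}}=\lambda d+u$ with $u\perp d$ and uses $b=\kappa d$ to rewrite the objective as $b^\top\pi_t^{\mathrm{ref}}+\lambda\,b^\top d-\varepsilon(\lambda^2\|d\|_2^2+\|u\|_2^2)$. It then argues by direct objective comparisons:
\begin{itemize}
\item a maximizer has $\lambda\ge 0$, since otherwise $\pi_t^{\mathrm{ref}}$ does better;
\item every feasible point has $\lambda\le 1$, since otherwise $\pi_t^{\mathrm{out}}$ would not be LP-optimal;
\item $u=0$, since dropping $u$ preserves feasibility by convexity and strictly increases the objective.
\end{itemize}
This collapses the problem to the scalar concave quadratic $\max_{\lambda\in[0,1]}\{\lambda\,b^\top d-\varepsilon\lambda^2\|d\|_2^2\}$, and its clipped maximizer $\min\{1,\lambda^\star\}$ gives the mapping.

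\textbf{Your route.} You complete the square to identify $\pi_t^{\mathrm{pen}}=P_{D_R}(\pi_t^{\mathrm{ref}}+b/(2\varepsilon))$. Collinearity is then used in exactly one place: to put the unconstrained maximizer on the ray from $\pi_t^{\mathrm{ref}}$ through $\pi_t^{\mathrm{out}}$. The two cases are settled by convexity, and by the obtuse-angle characterization of projections, which reduces precisely to LP optimality of $\pi_t^{\mathrm{out}}$.

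\textbf{What each buys.} Your route is shorter. It also settles existence and uniqueness at once. The paper attributes these to concavity of $\phi$, but it is really strong concavity (equivalently, your projection view) that delivers them. Your route also explains why the equivalence fails without \eqref{eq:geom-condition}: in general the proximal point is a projected step from $\pi_t^{\mathrm{ref}}$ along $b$, not along $d$, so it leaves the smoothing segment. The paper's route needs no projection theory, only elementary value comparisons. It also exhibits explicitly the one-dimensional quadratic from which \eqref{eq:eps-to-alpha-mapping} and \eqref{eq:alpha-to-eps-mapping} are read off.
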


\section{Proofs}\label{sec:proof}

\begin{proof}{Proof of Lemma \ref{lem:neutral}}

First, we show that standard column generation is recovered when the stabilization strength is zero. 
For smoothing, it follows directly from \eqref{eq:f_smooth}.
For penalization, let $D_R:=\{\pi\in\mathbb{R}^m:\ A_R^\top\pi\le c_R\}$ be the RMP dual feasible region. Because the dual objective is decreasing in the slack variables, an optimal solution always attains the minimal feasible values $w_{+,i} = (\pi_i-\pi_{t,i}^{\mathrm{ref}})_+$ and $w_{-,i} = (\pi_{t,i}^{\mathrm{ref}}-\pi_i)_+$, where $(x)_+:=\max\{x,0\}$. Thus, the penalization operator is equivalent to the maximization:
$$\pi_t^{\mathrm{sep}} \in \arg\max_{\pi\in D_R}\Big\{ b^\top\pi -\langle \varepsilon_t^-,(\pi_t^{\mathrm{ref}}-\pi)_+\rangle -\langle \varepsilon_t^+,(\pi-\pi_t^{\mathrm{ref}})_+\rangle \Big\}.$$
When $\varepsilon_t^- = \varepsilon_t^+ = 0$, the penalized objective reduces to $b^\top\pi$. This perfectly recovers the standard RMP dual, meaning any maximizer is an optimal RMP dual solution.

Beyond this common case:
\begin{enumerate}
    \item For smoothing, if $\pi_t^{\mathrm{ref}} = \pi_t^{\mathrm{out}}$, $f_t^{\mathrm{smooth}}(\pi_t^{\mathrm{out}}, \alpha_t; \mathrm{RMP}_t) = \pi_t^{\mathrm{out}}$ follows directly from \eqref{eq:f_smooth}.
    \item For penalization, assume $\pi_t^{\mathrm{ref}}$ is an optimal RMP dual solution. For any $\pi\in D_R$, the penalty terms in the equivalent maximization are nonnegative, yielding the upper bound:
    $$b^\top\pi -\langle \varepsilon_t^-,(\pi_t^{\mathrm{ref}}-\pi)_+\rangle -\langle \varepsilon_t^+,(\pi-\pi_t^{\mathrm{ref}})_+\rangle \le b^\top\pi.$$
    Since $\pi_t^{\mathrm{ref}}$ is optimal for $\max_{\pi\in D_R} b^\top\pi$, we have $b^\top\pi \le b^\top\pi_t^{\mathrm{ref}}$ for all $\pi\in D_R$. At $\pi_t^{\mathrm{ref}}$, the penalty terms evaluate to zero, so the penalized objective equals $b^\top\pi_t^{\mathrm{ref}}$. Consequently, $\pi_t^{\mathrm{ref}}$ attains the global maximum of the penalized objective, ensuring that any maximizer returned by $f_t^{\mathrm{pen}}$ is an optimal dual solution of the RMP.

% 2. For polyhedral penalization, write the stabilized dual objective as 
% \[
% \max_{A_R^\top\pi\le c_R}\ b^\top\pi 
% -\big\langle \varepsilon_t^-,(\delta_t^--\pi)_+\big\rangle
% -\big\langle \varepsilon_t^+,(\pi-\delta_t^+)_+\big\rangle .
% \]
% On the compact feasible set, as $\varepsilon_t^\pm\to 0$ the objective converges uniformly to $b^\top\pi$, 
% so the maximizer converges to the RMP dual optimizer $\pi_t^{\mathrm{out}}$.
% If $\pi_t^{\mathrm{out}}$ satisfies $\delta_t^-\le \pi_t^{\mathrm{out}}\le \delta_t^+$, the penalty vanishes 
% at $\pi_t^{\mathrm{out}}$ and the stabilized solution equals $\pi_t^{\mathrm{out}}$.
% 3. For Euclidean–proximal penalization \eqref{eq:f_pen3}, strong concavity ensures a unique optimizer; 
% as $\varepsilon\downarrow 0$, the objective uniformly converges to $b^\top\pi$, whence 
% $f_t^{\mathrm{pen}}(\pi_t^{\mathrm{ref}},\theta_t)\to \pi_t^{\mathrm{out}}$. 
% If $\pi_t^{\mathrm{ref}}=\pi_t^{\mathrm{out}}$, the penalty is minimized at $\pi_t^{\mathrm{out}}$, 
% and the stabilized solution is $\pi_t^{\mathrm{out}}$. 

\end{enumerate}
\end{proof}

\begin{proof}{Proof of Proposition \ref{prop:local-consistency-nos-stall}}

For any column $q\in Q$, the reduced cost is 
$r_q(\pi)=c_q-A_q^\top\pi$. Hence, for any $\pi,\pi'\in\mathbb{R}^m$,

\begin{equation}
    |r_q(\pi) - r_q(\pi')| = |(\pi' - \pi)^\top A_q| \le \|A_q\|_2 \|\pi - \pi'\|_2 \le \bar{a} \|\pi - \pi'\|_2.
\end{equation}
Let $\Delta = \|\pi_t^{\mathrm{sep}} - \pi_t^{\mathrm{out}}\|_2$. By assumption, 
\[
\Delta \le \frac{\min\{\gamma_t^-, \gamma_t^+\}}{2\bar{a}}.
\]
Pick a most-negative-reduced-cost column under $\pi_t^{\mathrm{out}}$:
\[
q_t^\star \in \arg\min_{q\in Q} r_q(\pi_t^{\mathrm{out}}),
\qquad
r_{q_t^\star}(\pi_t^{\mathrm{out}})=\sigma(\pi_t^{\mathrm{out}})=-\gamma_t^-.
\]
Evaluating it at $\pi_t^{\mathrm{sep}}$ yields

\[
r_{q_t^\star}(\pi_t^{\mathrm{sep}})
\le r_{q_t^\star}(\pi_t^{\mathrm{out}})+\bar a\,\Delta
= -\gamma_t^- + \bar a\,\Delta
\le -\gamma_t^- + \bar a\cdot\frac{\gamma_t^-}{2\bar a}
= -\frac{\gamma_t^-}{2}<0.
\]
So there exists at least one column with negative reduced cost at $\pi_t^{\mathrm{sep}}$.

If $\mathcal Q_+(\pi_t^{\mathrm{out}})=\varnothing$, then every column belongs to 
$\mathcal Q_-(\pi_t^{\mathrm{out}})$ and the claim is immediate. 
Henceforth assume $\mathcal Q_+(\pi_t^{\mathrm{out}})\neq\varnothing$.
Now take any non-improving column $q_+\in\mathcal{Q}_+(\pi_t^{\mathrm{out}})$.
By definition, $r_{q_+}(\pi_t^{\mathrm{out}})\ge \gamma_t^+$, hence
\[
r_{q_+}(\pi_t^{\mathrm{sep}})
\ge r_{q_+}(\pi_t^{\mathrm{out}})-\bar a\,\Delta
\ge \gamma_t^+ - \bar a\,\Delta
\ge \gamma_t^+ - \bar a\cdot\frac{\gamma_t^+}{2\bar a}
= \frac{\gamma_t^+}{2}\ge 0.
\]
Therefore, every column in $\mathcal{Q}_+(\pi_t^{\mathrm{out}})$ has nonnegative reduced cost at
$\pi_t^{\mathrm{sep}}$, while at least one column in $Q$ has negative reduced cost at $\pi_t^{\mathrm{sep}}$.
Since $Q$ is partitioned as
$Q=\mathcal{Q}_-(\pi_t^{\mathrm{out}})\cup\mathcal{Q}_+(\pi_t^{\mathrm{out}})$, any minimizer
$q(\pi_t^{\mathrm{sep}})\in\arg\min_{q\in Q} r_q(\pi_t^{\mathrm{sep}})$ must belong to
$\mathcal{Q}_-(\pi_t^{\mathrm{out}})$.

Consequently, $r_{q(\pi_t^{\mathrm{sep}})}(\pi_t^{\mathrm{out}})<0$ and mispricing does not occur.
\end{proof}

\begin{proof}{Proof of Corollary \ref{prop:safe-bound-smoothing}}

Recall the smoothing operator defined in \eqref{eq:f_smooth}: $\pi_t^{\mathrm{sep}} = \alpha_t \pi_t^{\mathrm{ref}} + (1-\alpha_t) \pi_t^{\mathrm{out}}$.
The Euclidean distance between the stabilized point and the RMP solution is:
\[
\|\pi_t^{\mathrm{sep}} - \pi_t^{\mathrm{out}}\|_2 = \|\alpha_t (\pi_t^{\mathrm{ref}} - \pi_t^{\mathrm{out}})\|_2 = \alpha_t \|\pi_t^{\mathrm{ref}} - \pi_t^{\mathrm{out}}\|_2.
\]
To prevent mispricing, Proposition \ref{prop:local-consistency-nos-stall} requires $\|\pi_t^{\mathrm{sep}} - \pi_t^{\mathrm{out}}\|_2 \le \frac{\min\{\gamma_t^-, \gamma_t^+\}}{2\bar{a}}$.
Substituting the distance expression:
\[
\alpha_t \|\pi_t^{\mathrm{ref}} - \pi_t^{\mathrm{out}}\|_2 \le \frac{\min\{\gamma_t^-, \gamma_t^+\}}{2\bar{a}}.
\]
Solving for $\alpha_t$ yields the condition stated in the corollary.
\end{proof}

\begin{proof}{Proof of Proposition \ref{prop:safe-bound-penalization}}

If $\pi_t^{\mathrm{ref}} = \pi_t^{\mathrm{out}}$, the stabilization penalty vanishes at the optimum, yielding $\pi_t^{\mathrm{sep}} = \pi_t^{\mathrm{out}}$ and preventing mispricing. 

Assume $\pi_t^{\mathrm{ref}} \neq \pi_t^{\mathrm{out}}$. The stabilized dual $\pi_t^{\mathrm{sep}}$ maximizes the penalized objective $b^\top\pi - P_t(\pi)$ over $D_R$, where the penalty term is defined as $P_t(\pi) := \langle \varepsilon_t^-,(\pi_t^{\mathrm{ref}}-\pi)_+\rangle + \langle \varepsilon_t^+,(\pi-\pi_t^{\mathrm{ref}})_+\rangle \ge 0$. 
By the optimality of $\pi_t^{\mathrm{sep}}$ evaluated against the true dual optimum $\pi_t^{\mathrm{out}}$, we have \(b^\top \pi_t^{\mathrm{sep}} - P_t(\pi_t^{\mathrm{sep}}) \ge b^\top \pi_t^{\mathrm{out}} - P_t(\pi_t^{\mathrm{out}}).\)
Since $P_t(\pi_t^{\mathrm{sep}}) \ge 0$, rearranging gives a bound on the optimality gap:
$$b^\top \pi_t^{\mathrm{out}} - b^\top \pi_t^{\mathrm{sep}} \le P_t(\pi_t^{\mathrm{out}}).$$
% Invoking \eqref{eq:error_bound_unique} for the unique optimum $\pi_t^{\mathrm{out}}$, we have $b^\top \pi_t^{\mathrm{out}} - b^\top \pi_t^{\mathrm{sep}} \ge \lambda_t\|\pi_t^{\mathrm{sep}} - \pi_t^{\mathrm{out}}\|_2$. 
Combining this with  \eqref{eq:error_bound_unique} yields:
$$\|\pi_t^{\mathrm{sep}} - \pi_t^{\mathrm{out}}\|_2 \le \frac{P_t(\pi_t^{\mathrm{out}})}{\lambda_t}.$$
Next, we bound $P_t(\pi_t^{\mathrm{out}})$ using $\varepsilon_\infty = \max_i \{\varepsilon_{t,i}^-, \varepsilon_{t,i}^+\}$. Since $w_i^- = (\pi_{t,i}^{\mathrm{ref}} - \pi_i)_+$ and $w_i^+ = (\pi_i - \pi_{t,i}^{\mathrm{ref}})_+$, we note that $w_i^- + w_i^+ = |\pi_i - \pi_{t,i}^{\mathrm{ref}}|$. Thus:
$$P_t(\pi_t^{\mathrm{out}}) \le \sum_{i=1}^m \varepsilon_\infty |\pi_{t,i}^{\mathrm{out}} - \pi_{t,i}^{\mathrm{ref}}| = \varepsilon_\infty \|\pi_t^{\mathrm{out}} - \pi_t^{\mathrm{ref}}\|_1 \le \varepsilon_\infty \sqrt{m}\,\|\pi_t^{\mathrm{out}} - \pi_t^{\mathrm{ref}}\|_2.$$
Substituting this into our distance bound gives:
$$\|\pi_t^{\mathrm{sep}} - \pi_t^{\mathrm{out}}\|_2 \le \frac{\varepsilon_\infty \sqrt{m}\,\|\pi_t^{\mathrm{out}} - \pi_t^{\mathrm{ref}}\|_2}{\lambda_t}.$$
By Proposition~\ref{prop:local-consistency-nos-stall}, if the dual deviation $\|\pi_t^{\mathrm{sep}} - \pi_t^{\mathrm{out}}\|_2$ is bounded by the safe radius $\frac{\min\{\gamma_t^-,\gamma_t^+\}}{2\bar{a}}$, then $r_{q(\pi_t^{\mathrm{sep}})}(\pi_t^{\mathrm{out}}) < 0$ and mispricing is prevented. Enforcing this upper bound on the derived distance yields the stated condition on $\varepsilon_\infty$, completing the proof.

 \end{proof}

\begin{proof}{Proof of Proposition \ref{prop:smooth-fix-hybrid}}

The finite iteration \(t\le T_0\) does not affect convergence. 
After the switch, the method coincides with the standard reference points update scheme.
By \cite[Proposition~2]{pessoa2018automation}, on any stalled sequence, this scheme eventually realigns $\pi_t^{\mathrm{sep}}$ with $\pi_t^{\mathrm{out}}$ enough for pricing to return a separating column. Thus, it finally satisfies Lemma~\ref{lem:convergence}, which implies finite termination with an optimal solution.

\end{proof}

\begin{proof}{Proof of Proposition \ref{prop:prox-smooth-equiv}}

Let $D_R:=\{\pi\in\mathbb{R}^m:\ A_R^\top\pi\le c_R\}$, which is nonempty, closed, and convex. Fix $\pi_t^{\mathrm{ref}}\in D_R$, and let \(\pi_t^{\mathrm{out}}\in\arg\max_{\pi\in D_R} \ b^\top \pi\). By definition, $d=\pi_t^{\mathrm{out}}-\pi_t^{\mathrm{ref}}$. If $d=0$, then $\pi_t^{\mathrm{ref}}=\pi_t^{\mathrm{out}}$ and the claim is immediate. 
Henceforth assume $d\neq 0$, and suppose that $b$ is aligned with $d$, i.e., $b=\kappa d$ for some $\kappa>0$.

Consider the penalized dual problem
\begin{equation}\label{eq:prox-pen}
\max_{\pi\in D_R}\ \Big\{ \phi(\pi):= b^\top\pi - \varepsilon\|\pi-\pi_t^{\mathrm{ref}}\|_2^2 \Big\},
\qquad \varepsilon>0.
\end{equation}
Because $\phi$ is concave and $D_R$ is closed and convex, \eqref{eq:prox-pen} admits a unique maximizer, denoted $\pi_t^{\mathrm{pen}}$.
Take any $\pi\in D_R$ and decompose
\[
\pi-\pi_t^{\mathrm{ref}}=\lambda d + u,
\qquad
\lambda:=\frac{d^\top(\pi-\pi_t^{\mathrm{ref}})}{\|d\|_2^2}\in\mathbb{R},
\quad
u\perp d.
\]
Using $b=\kappa d$ and $u\perp d$, we have $b^\top u=\kappa d^\top u=0$ and therefore
\begin{equation}\label{eq:phi-decomp-OR}
\phi(\pi)
=
b^\top \pi_t^{\mathrm{ref}}
+
\lambda\, b^\top d
-
\varepsilon\Big(\lambda^2\|d\|_2^2 + \|u\|_2^2\Big).
\end{equation}

We first show that any maximizer must satisfy $\lambda\in[0,1]$.
If $\lambda<0$, then $\lambda(b^\top d)\le 0$ and $\|\pi-\pi_t^{\mathrm{ref}}\|_2^2>0$, so
\[
\phi(\pi)-\phi(\pi_t^{\mathrm{ref}})
=
\lambda(b^\top d)-\varepsilon\|\pi-\pi_t^{\mathrm{ref}}\|_2^2
<0,
\]
Since $\pi_t^{\mathrm{ref}}\in D_R$ is feasible, the optimal value satisfies
$\phi(\pi_t^{\mathrm{pen}})=\max_{\pi\in D_R}\phi(\pi)\ge \phi(\pi_t^{\mathrm{ref}})$, and hence any feasible point with $\lambda<0$ cannot be optimal. 
Therefore, $\lambda\ge 0$ at $\pi_t^{\mathrm{pen}}$.
Moreover, if a feasible $\pi\in D_R$ had $\lambda>1$, then by $b^\top u=0$ we would obtain
\[
b^\top \pi
=
b^\top \pi_t^{\mathrm{ref}}+\lambda\, b^\top d
>
b^\top \pi_t^{\mathrm{ref}}+ b^\top d
=
b^\top \pi_t^{\mathrm{out}},
\]
contradicting the definition of $\pi_t^{\mathrm{out}}$ as a maximizer of $b^\top\pi$ over $D_R$. 
Hence $\lambda\le 1$ for any feasible $\pi$, and in particular for $\pi_t^{\mathrm{pen}}$.

Now fix $\lambda\in[0,1]$ and define the projection onto the line through $\pi_t^{\mathrm{ref}}$ and $\pi_t^{\mathrm{out}}$:
\[
\pi(\lambda):=\pi_t^{\mathrm{ref}}+\lambda d=(1-\lambda)\pi_t^{\mathrm{ref}}+\lambda \pi_t^{\mathrm{out}}.
\]
By convexity of $D_R$, $\pi(\lambda)\in D_R$. Comparing \eqref{eq:phi-decomp-OR} for $\pi$ and $\pi(\lambda)$ yields
\[
\phi(\pi(\lambda))-\phi(\pi)=\varepsilon\|u\|_2^2\ge 0,
\]
with strict inequality if $u\neq 0$. Hence the unique maximizer must satisfy $u=0$, i.e.,
\[
\pi_t^{\mathrm{pen}}=\pi_t^{\mathrm{ref}}+\lambda d
\quad \text{for some } \lambda\in[0,1].
\]

Substituting $\pi=\pi_t^{\mathrm{ref}}+\lambda d$ into \eqref{eq:prox-pen} gives
\[
\max_{\lambda\in[0,1]}
\ \Big\{
b^\top(\pi_t^{\mathrm{ref}}+\lambda d)-\varepsilon\lambda^2\|d\|_2^2
\Big\}
=
\text{const}+
\max_{\lambda\in[0,1]}
\ \Big\{
\lambda(b^\top d)-\varepsilon\lambda^2\|d\|_2^2
\Big\}.
\]
The unconstrained maximizer of this concave quadratic is $\lambda^\star=\frac{b^\top d}{2\varepsilon\|d\|_2^2}.$
% \[
% \lambda^\star=\frac{b^\top d}{2\varepsilon\|d\|_2^2}.
% \]
Therefore,
\[
\lambda_{\mathrm{opt}}=\min\{1,\lambda^\star\},
\qquad
\pi_t^{\mathrm{pen}}=\pi_t^{\mathrm{ref}}+\lambda_{\mathrm{opt}}(\pi_t^{\mathrm{out}}-\pi_t^{\mathrm{ref}})
=(1-\lambda_{\mathrm{opt}})\pi_t^{\mathrm{ref}}+\lambda_{\mathrm{opt}}\pi_t^{\mathrm{out}}.
\]
Setting $1-\alpha_t=\lambda_{\mathrm{opt}}$ gives \eqref{eq:prox-equals-smooth} and 
\eqref{eq:eps-to-alpha-mapping}. Solving for $\varepsilon$ yields \eqref{eq:alpha-to-eps-mapping}. 
\end{proof}

\section{State Features}\label{appendix:column feature}

\begin{figure}[htbp]
\centering
\begingroup
\usetikzlibrary{positioning}
\begin{tikzpicture}[
    x=1cm,y=1cm,
    scale=0.65,
    transform shape,
    >=latex,
    font=\small,
    vnode/.style={draw, circle, minimum size=0.85cm, line width=1pt, inner sep=0pt},
    cnode/.style={draw, circle, minimum size=0.85cm, line width=1pt, inner sep=0pt},
    edge/.style={line width=0.8pt},
    lab/.style={font=\normalsize}
]

% Titles
\node[lab] at (1.9,6.2) {Column set};
\node[lab] at (6.7,6.2) {Constraint set};

% Left nodes
\node[vnode] (v1)  at (1.9,5.1) {$v_1$};
\node[vnode] (v2)  at (1.9,4.0) {$v_2$};
\node[vnode] (v3)  at (1.9,2.9) {$v_3$};
\node            at (1.9,1.8) {$\vdots$};
\node[vnode] (vmt) at (1.9,0.7) {$v_{m_t}$};

% Right nodes
\node[cnode] (c1) at (6.7,4.4) {$c_1$};
\node[cnode] (c2) at (6.7,3.1) {$c_2$};
\node            at (6.7,1.95) {$\vdots$};
\node[cnode] (cn) at (6.7,0.7) {$c_n$};

% Edges
\draw[edge] (v1) -- (c1);
\draw[edge] (v1) -- (c2);

\draw[edge] (v2) -- (c2);
\draw[edge] (v2) -- (cn);

\draw[edge] (v3) -- (c1);
\draw[edge] (v3) -- (cn);

\draw[edge] (vmt) -- (c2);
\draw[edge] (vmt) -- (cn);

\end{tikzpicture}
\endgroup
% \caption{Illustration of the bipartite graph representation of the restricted master problem at iteration $t$. The left partition contains column nodes corresponding to the current RMP variables, denoted by $\mathcal{V}_t=\{v_1,\dots,v_{m_t}\}$, where $m_t$ is the number of columns currently present in the RMP at iteration $t$. The right partition contains constraint nodes corresponding to the master constraints, denoted by $\mathcal{C}=\{c_1,\dots,c_n\}$. An edge $(v,c)$ is present if and only if the coefficient of column $v$ in constraint $c$ is nonzero. This graph serves as the structural input to the GNN encoder in the RL state representation.}
\caption{Bipartite graph representation of the restricted master problem at iteration $t$. Column nodes $\mathcal{V}_t=\{v_1,\dots,v_{m_t}\}$ represent the current RMP variables, and constraint nodes $\mathcal{C}=\{c_1,\dots,c_n\}$ represent the master constraints. An edge $(v,c)$ is present iff the coefficient of column $v$ in constraint $c$ is nonzero. The graph serves as the structural input to the GNN encoder.}
\label{fig:bipartite_rmp_graph}
\end{figure}
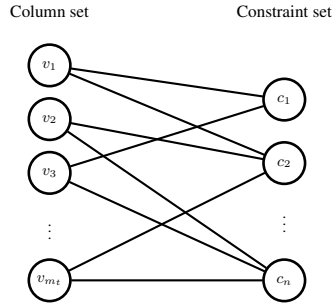

Figure~\ref{fig:bipartite_rmp_graph} illustrates the bipartite graph representation used to encode the RMP in Section~\ref{subsubsec:state}. 
% Column nodes correspond to variables currently present in the RMP, constraint nodes correspond to master constraints, and an edge is included whenever the corresponding coefficient in the RMP matrix is nonzero.
% Below, we provide the complete specification of features used in the state representation $s_t$.
The node features for the bipartite graph representation $G_t$ are detailed in Table~\ref{appendix:node_features}, while the global features are presented in Table~\ref{appendix:global_features}.

\begin{table}[h!]
\scriptsize
\centering
\caption{Node features for the bipartite graph representation $G_t$.}
\label{appendix:node_features}
\resizebox{\textwidth}{!}{%
\begin{tabular}{l l p{8.5cm}}
\toprule
\textbf{Component} & \textbf{Feature} & \textbf{Rationale / Interpretation} \\
\midrule
\multicolumn{3}{l}{\textbf{Column Node Features ($\mathbf{x}_v$)}} \\
\textit{Basic Properties} & Waste& Unused capacity of the cutting pattern.\\
& Degree& Number of constraints covered by the column.\\
& Basis status & Binary indicator (1 if $v$ is in the RMP basis, 0 otherwise). \\
& $r_v(\pi^{\mathrm{sep}})$& Reduced cost under $\pi^{\mathrm{sep}}$ at the iteration of addition.\\
& $r_v(\pi^{\mathrm{out}})$& Reduced cost under $\pi^{\mathrm{out}}$ at introduction (computed for smoothing only).\\
\addlinespace
\textit{Historical Information}& Age& Number of iterations since the column was generated.\\
& Basis transitions & Count of times the column entered or left the basis.\\
\midrule
\multicolumn{3}{l}{\textbf{Constraint Node Features ($\mathbf{x}_c$)}} \\
\textit{Properties} & Right-hand side ($b_c$) & \\
& Degree& Number of columns covering the constraint.\\
\addlinespace
\textit{Dual Information}& Current dual ($\pi_{c,t}^{\mathrm{sep}}$) & Component $c$ of the stabilized dual vector $\pi_t^{\mathrm{sep}}$. \\
& Historical duals (5 iters) & $\pi_{c,t-1}^{\mathrm{sep}}, \dots, \pi_{c,t-5}^{\mathrm{sep}}$.\\
& Dual value variance & Variance of the last 5 historical dual values.\\
\bottomrule
\end{tabular}
}
\end{table}

\begin{table}[h!]
\centering
\scriptsize
\caption{Global features for the state representation ($\mathbf{g}_t$).}
\label{appendix:global_features}
\resizebox{\textwidth}{!}{%
\begin{tabular}{l l p{8.5cm}}
\toprule
\textbf{Component} & \textbf{Feature} & \textbf{Rationale / Interpretation} \\
\midrule
\multicolumn{3}{l}{\textbf{Problem-Specific Features}} \\
\textit{Problem Scale} & Number of item types &  \\
& Raw material roll length & \\
\addlinespace
\multicolumn{3}{l}{\textbf{Algorithmic State Features}} \\
\textit{Progress} & Iteration count ($t$) & Tracks overall algorithm progress. \\
& Relative progress & Normalized improvement in the progress metric. For smoothing, this metric is the RMP objective value; for penalization, it is the objective value of the stabilized solution, excluding the penalty term.\\
\addlinespace
\textit{Stabilization State} & Previous stabilization parameter ($\theta_{t-1}$) & The last action chosen by the agent (e.g., $\alpha_{t-1}$ or $\varepsilon_{t-1}$). \\
& $\|\pi_t^{\mathrm{sep}} - \pi_t^{\mathrm{ref}}\|_2$ & Euclidean distance; measures the influence of the reference point. \\
& $\|\pi_t^{\mathrm{sep}} - \pi_{t-1}^{\mathrm{sep}}\|_2$ & Euclidean distance; measures dual solution stability/oscillation. \\
\addlinespace
\textit{Mispricing Proxy} & Mispricing count & Cumulative number of mispricing iterations; computed for smoothing only. \\
& Average minimum reduced cost (5 iters) & Average $\sigma(\pi_{t'}^{\mathrm{sep}})$ for $t'=t-5,\ldots,t-1$. \\
& Average relative progress (5 iters) & Over the last 5 iterations, computed from the same progress measure used above. \\
& Iterations without progress & Count of consecutive iterations without measurable progress. \\
\bottomrule
\end{tabular}
}
\end{table}

% In the column node features, $r_v(\pi^{\mathrm{out}})$ refers to the reduced cost of the column generated using $\pi^{\mathrm{sep}}$ evaluated with respect to the \emph{true} RMP dual solution $\pi^{\mathrm{out}}$. 
% By Definition~\ref{def:mispricing}, if this value is nonnegative, then the added column fails to separate $\pi^{\mathrm{out}}$ at this iteration and a mispricing occurs. 
% Accordingly, the global feature \textit{Mispricing count} records the cumulative number of such iterations.
% We note that this feature is only computed for the smoothing method because for penalization, the algorithm solves a stabilized RMP and $\pi^{\mathrm{out}}$ is not naturally available without additional computation of the true RMP.
Among the column-node features, $r_v(\pi^{\mathrm{out}})$ denotes the reduced cost of a column generated under $\pi^{\mathrm{sep}}$, evaluated at the \emph{true} RMP dual solution $\pi^{\mathrm{out}}$. By Definition~\ref{def:mispricing}, a nonnegative value means that the added column fails to separate $\pi^{\mathrm{out}}$ and hence corresponds to mispricing. 
The global feature \textit{Mispricing count} records the cumulative number of such iterations. This feature is used only for smoothing, since under penalization $\pi^{\mathrm{out}}$ is not directly available without additionally solving the true RMP.

% Several features are designed to provide the RL agent with a view of the stabilization dynamics, including: \textit{dual value variance} to capture dual oscillation patterns, and \textit{Average minimum reduced cost} monitors pricing behavior under the stabilized dual. \textit{Relative progress} and \textit{Consecutive iterations without progress} are used to detect stalling. For smoothing, progress is measured by the change in the RMP objective value. For penalization, since the true unpenalized RMP objective is not directly available, we use the original objective component of the penalized solution, excluding the realized penalty contribution, as a computationally efficient proxy for current progress. Finally, \textit{Previous stabilization parameter} and $\|\pi_t^{\mathrm{sep}} - \pi_t^{\mathrm{ref}}\|_2$ help the agent reason about its recent action and its immediate effect on dual deviation.

Several features are included to summarize stabilization dynamics. \textit{Dual value variance} captures dual oscillation, and \textit{Average minimum reduced cost} summarizes pricing behavior under $\pi^{\mathrm{sep}}$. \textit{Relative progress} and \textit{Consecutive iterations without progress} indicate stalling. For smoothing, progress is measured by the change in the RMP objective; for penalization, where the true unpenalized RMP objective is unavailable, we use the original objective component of the penalized solution, excluding the realized penalty term, as a proxy. Finally, \textit{Previous stabilization parameter} and $\|\pi_t^{\mathrm{sep}}-\pi_t^{\mathrm{ref}}\|_2$ help the agent assess its recent action and its immediate effect on dual deviation.

% Additionally, the \textit{Number of item types} and \textit{Raw material roll length} are problem-dependent features specifically designed for the cutting stock problem. These can be replaced with other problem-specific features when applying the RLSCG framework to other problems.
% All features are normalized to the range $[-1, 1]$ before being input to the graph neural network to ensure stable training.
The \textit{Number of item types} and \textit{Raw material roll length} are CSP-specific features and can be replaced by other problem-specific quantities in different applications. All features are normalized to $[-1,1]$ before being passed to the graph neural network.

% \section{Cutting Stock Problem}\label{appendix:CSP}
\section{Synthetic Test Instance Generation}\label{appendix:CSP}

Table~\ref{tab:data_generation} summarizes the parameter ranges used to generate the synthetic test sets in Section~\ref{subsec:setup}.

\begin{table}[htbp]
\centering
\scriptsize
\caption{Parameters for synthetic test instance generation.}
\label{tab:data_generation}
\renewcommand{\arraystretch}{1.2}
\begin{tabular}{lcc}

\toprule
\textbf{Group} & \textbf{Lower Bound ($lb$)} & \textbf{Upper Bound ($ub$)} \\
Gen\_1 & $\mathcal{U}[0.05, 0.45]$ & $\mathcal{U}[0.50, 0.85]$ \\
Gen\_2 & $\mathcal{U}[0.35, 0.45]$ & $\mathcal{U}[0.85, 0.95]$ \\
Gen\_3 & $\mathcal{U}[0.25, 0.45]$ & $\mathcal{U}[0.75, 0.95]$ \\
Gen\_4 & $\mathcal{U}[0.05, 0.15]$& $\mathcal{U}[0.15, 0.30]$ \\
\bottomrule
\multicolumn{3}{l}{\footnotesize \textit{Note:} For all groups, roll length $L \sim \mathcal{U}[800, 1000]$ and demand $d_i \sim \mathcal{U}[1, 20]$.} \\
\multicolumn{3}{l}{\footnotesize Size categories: Small ($m \in [20,50]$), Medium ($m \in [50,100]$), Large ($m \in [100,150]$).}
\end{tabular}
\end{table}

\section{GNN Structure and Hyperparameter}\label{appendix:hyper}

% Here, we detail the Graph Neural Network architecture used for state representation and the hyperparameters used for training the Deep Q-Network agent.

% The state $s_t$ is represented as a bipartite graph $G_t = (\mathcal{V}_t \cup \mathcal{C}, E_t)$, as defined in Section \ref{subsubsec:state}. We employ a message-passing GNN to encode this graph structure into a fixed-size embedding.
% The input feature vectors for constraint nodes ($\mathbf{x}_c$) and column nodes ($\mathbf{x}_v$) are constructed based on the specifications provided in Appendix \ref{appendix:column feature}. The input dimension for constraint nodes is $d_c=9$. For column nodes, the input dimension depends on the stabilization method: $d_v=7$ for the smoothing method and $d_v=6$ for penalization methods.
% The complete architectural specifications are summarized in Table \ref{tab:gnn-arch}.

Here we summarize the GNN architecture and training hyperparameters. The state $s_t$ is represented by the bipartite graph $G_t=(\mathcal{V}_t\cup\mathcal{C},E_t)$ from Section~\ref{subsubsec:state}.
We employ a message-passing GNN to encode this graph structure into a fixed-size embedding.
Node features are defined in Appendix~\ref{appendix:column feature}, and the full architecture is reported in Table~\ref{tab:gnn-arch}.

\begin{table}[h!]
\centering
\scriptsize
\caption{GNN Architecture Specifications}
\label{tab:gnn-arch}
\begin{tabular}{ll}
\toprule
\textbf{Component} & \textbf{Specification} \\
\midrule
Input Dimension ($\mathbf{x}_c$) & 9 \\
Input Dimension ($\mathbf{x}_v$) & 7 (Smoothing) / 6 (Penalization) \\
Global Feature Dimension & 11(Smoothing) / 10 (Penalization)\\
Hidden Dimension & 32 \\
Message Passing Layers & 3 \\
Message Functions & 2-layer MLP with ReLU \\
Aggregation & Degree-normalized message aggregation followed by mean-pooling over all nodes\\
State Vector Dimension & 64\\
Output Actions & 20 \\
\bottomrule
\end{tabular}
\end{table}

The RLSCG agent is trained using the Deep Q-Network (DQN) algorithm \citep{mnih2015human} equipped with experience replay and a target network to ensure training stability. Optimization is performed using the Adam optimizer with a batch size of 64.
% To ensure a fair comparison with state-of-the-art learning-based column generation methods, we adopt the hyperparameter configuration reported by \cite{chi2022deep}. We do not perform separate hyperparameter tuning for our model; instead, we directly utilize the best-performing settings from the literature to isolate the contribution of our stabilization framework.
% The specific settings are:
% \begin{itemize}
%     \item \textit{Reward Scaling:} The improvement reward coefficient is set to $\beta = 300$.
%     \item \textit{Iteration Penalty:} The per-iteration penalty is set to $\eta = 1$.
%     \item \textit{Terminal Reward:} The terminal reward is set to $R_{\text{conv}} = 10$.
%     \item \textit{Exploration:} We use an $\epsilon$-greedy strategy where $\epsilon$ decays linearly to a final value of $0.05$.
%     \item \textit{Discount Factor:} The discount factor for future rewards is $\gamma = 0.9$.
%     \item \textit{Learning Rate:} The learning rate is set to $lr = 0.001$.
% \end{itemize}
To ensure a fair comparison with prior learning-based column generation methods, we adopt the hyperparameter settings of \cite{chi2022deep} without further tuning. Specifically, we set $\beta=300$, $\eta=1$, $R_{\mathrm{conv}}=10$, $\gamma=0.9$, learning rate $lr=0.001$, and use an $\epsilon$-greedy policy with linear decay to $\epsilon=0.05$.

The reinforcement learning framework was implemented in Python 3.11.7 using PyTorch 2.6.0. The Gurobi Optimizer 10.0.1 was employed to solve both the restricted master problem and the pricing subproblems. All computational experiments were conducted on a Linux cluster node equipped with an AMD EPYC 7763 Processor. The training phase required approximately 12 hours. For the evaluation of smoothing and penalization methods, we trained a single model on the training set and evaluated its performance on separate testing sets to assess generalization capabilities.

\section{Detailed Experimental Results}
\label{app:detailed_results}

\subsection{Impact of Predicted Duals in Smoothing}
\label{appendix: Predicted Duals in Smoothing}

We provide comprehensive performance distributions for the reference point strategies discussed in Section~\ref{subsec:predicted_dual_smoothing}. While the main text reports average performance metrics for large-scale instances, the boxplots presented in Figure \ref{fig:scg_time}-\ref{fig:rlscg_time} show the variance and robustness of each strategy (Standard, Static Predicted Reference, and Adaptive Predicted Reference) across all test instances.

The visual results support three key observations in the main text.
For ASCG-1 and ASCG-2, the Standard strategy often exhibits large interquartile ranges and extreme outliers. 
% S-WS and A-WS significantly compresses these distributions, particularly in the difficult instances.
% Also, while both S-WS and A-WS improve upon the Standard strategy, A-WS displays larger variances in several instances. 
% \edit{
Incorporating predicted duals improves upon the Standard strategy across many test sets by compressing these distributions. 
% }
However, no clear superiority is observed between them, which stems from the sensitivity of the algorithm to the specific stagnation threshold used to trigger the switch. 
% The RLSCG method maintains compact distributions even under the Standard strategy, demonstrating that the marginal visual improvement from predicted duals is less pronounced than in ASCG-1 and ASCG-2. 
% \edit{
Finally, the RLSCG method maintains compact distributions even under the Standard strategy, and does not appear to gain much from the inclusion of predicted duals.
% }

% version 3: all in one figrue.

% \begin{figure}[htbp]
%     \centering
%     \captionsetup[subfigure]{justification=centering}

%     \begin{subfigure}[b]{0.75\textwidth}
%         \centering
%         \includegraphics[width=\textwidth]{figures/SCG_time.png}
%         \caption{ASCG-1}
%         \label{fig:scg_time}
%     \end{subfigure}

%     \vspace{0.8em}

%     \begin{subfigure}[b]{0.75\textwidth}
%         \centering
%         \includegraphics[width=\textwidth]{figures/ASCG_time.png}
%         \caption{ASCG-2}
%         \label{fig:ascg_time}
%     \end{subfigure}

%     \vspace{0.8em}

%     \begin{subfigure}[b]{0.75\textwidth}
%         \centering
%         \includegraphics[width=\textwidth]{figures/RLSCG_time.png}
%         \caption{RLSCG(S)}
%         \label{fig:rlscg_time}
%     \end{subfigure}

%     \caption{Computation time distributions for the smoothing methods under different reference-point strategies. Panels (a)--(c) report results for ASCG-1, ASCG-2, and RLSCG(S), respectively. In each panel, the x-axis represents the datasets, and the three boxes per dataset correspond to Standard, Static Predicted Reference, and Adaptive Predicted Reference, respectively.}
%     \label{fig:time_distributions_all}
% \end{figure}

% version 2: just time (seperate figures)

\begin{figure}[htbp]
    \centering
    \includegraphics[width=1\textwidth]{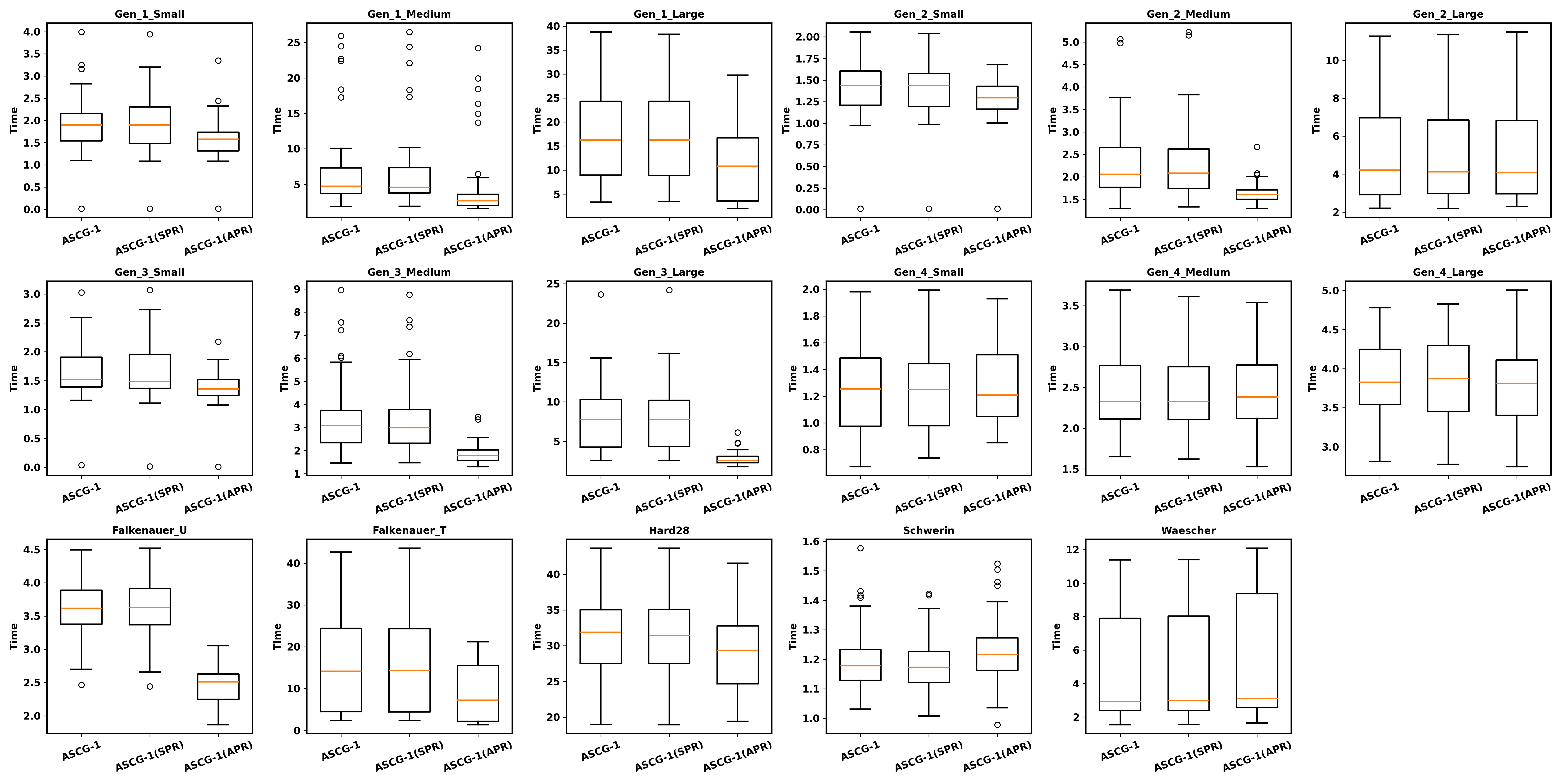}
    \caption{Computation time distributions for ASCG-1. The x-axis represents the datasets, and the three boxes per dataset correspond to Standard, Static Predicted Reference, and Adaptive Predicted Reference, respectively.}
    \label{fig:scg_time}
\end{figure}

\begin{figure}[htbp]
    \centering
    \includegraphics[width=1\textwidth]{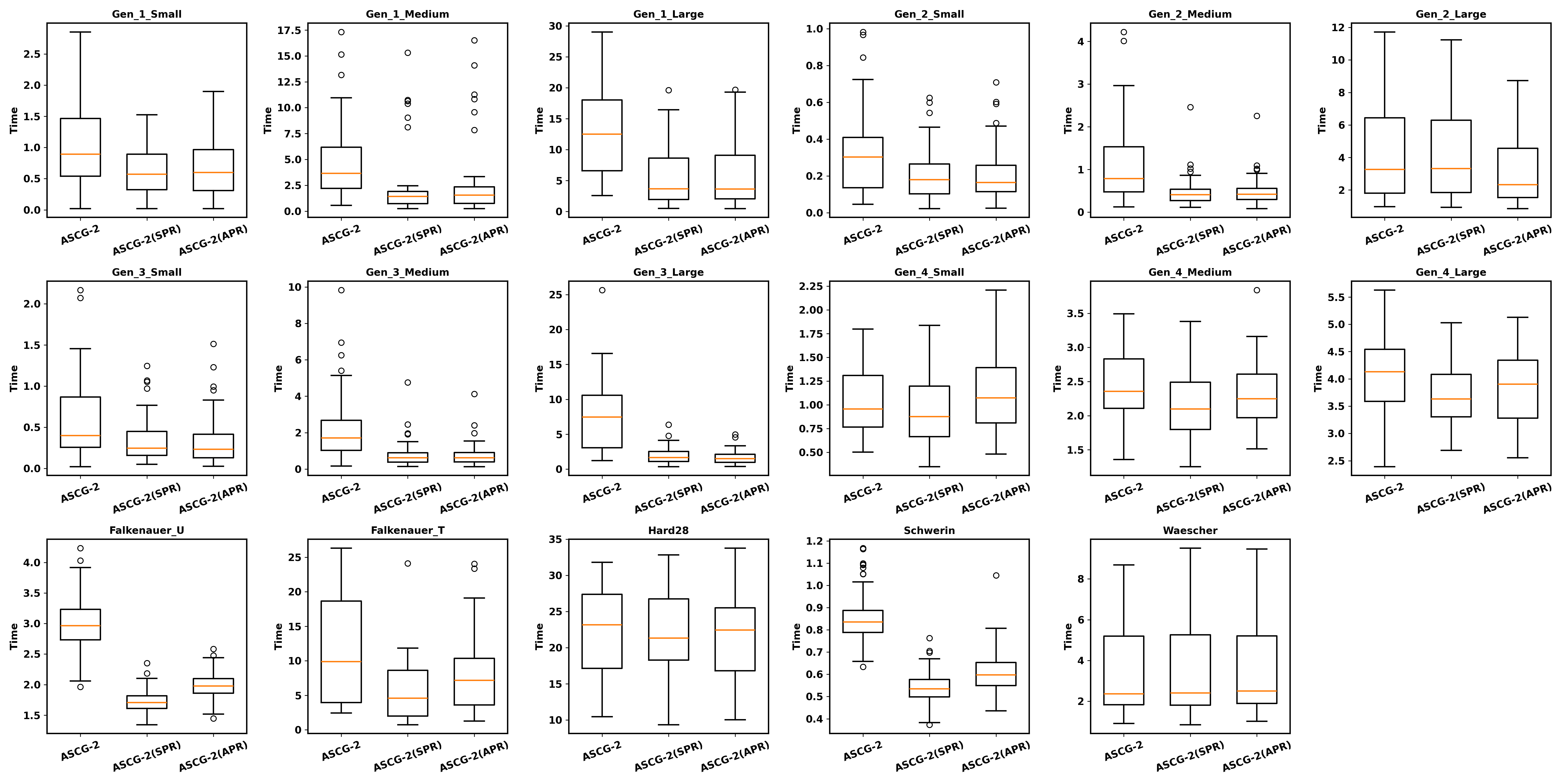}
    \caption{Computation time distributions for ASCG-2. The x-axis represents the datasets, and the three boxes per dataset correspond to Standard, Static Predicted Reference, and Adaptive Predicted Reference, respectively.}
    \label{fig:ascg_time}
\end{figure}

\begin{figure}[htbp]
    \centering
    \includegraphics[width=1\textwidth]{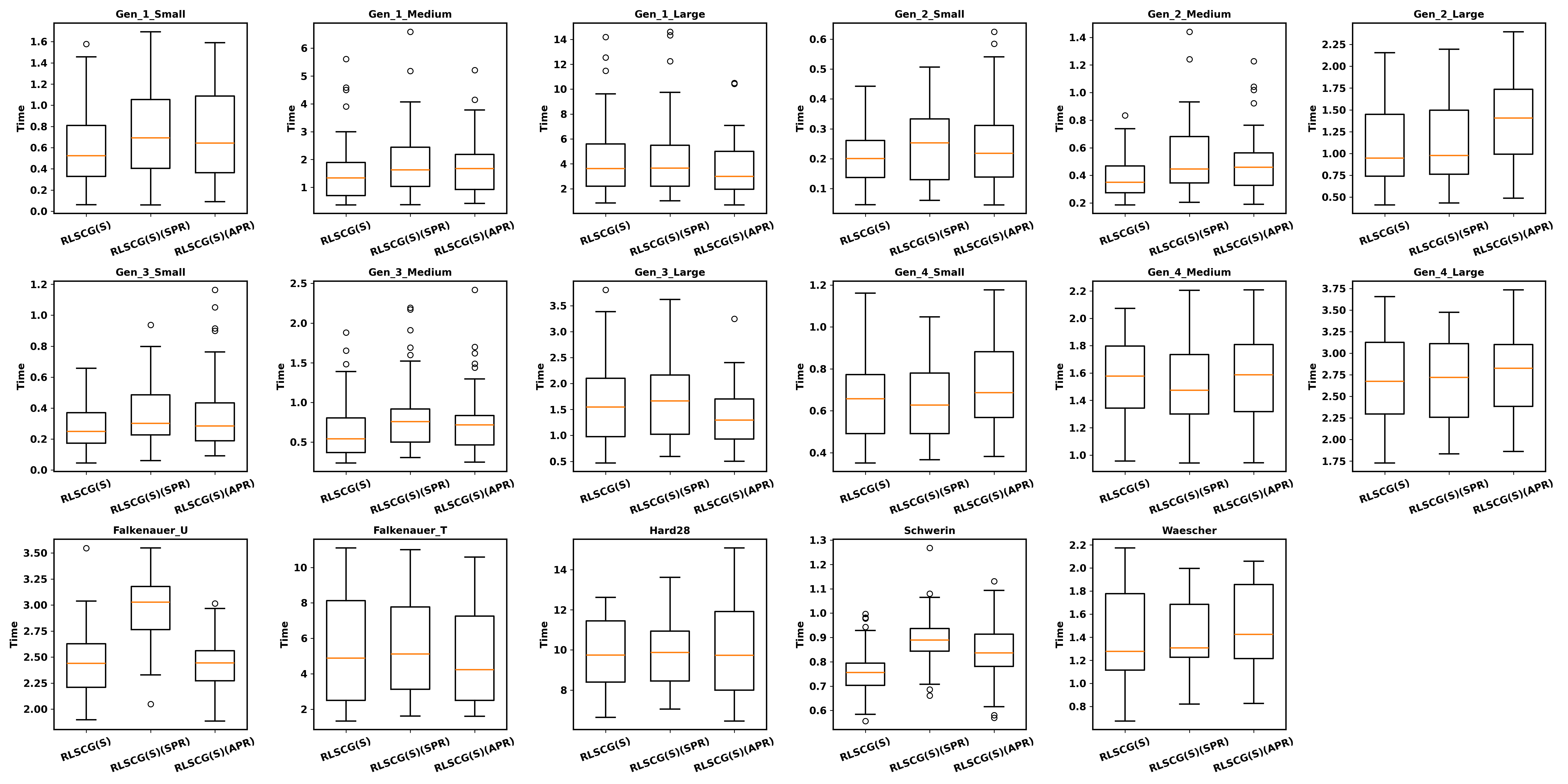}
    \caption{Computation time distributions for RLSCG(S). The x-axis represents the datasets, and the three boxes per dataset correspond to Standard, Static Predicted Reference, and Adaptive Predicted Reference, respectively.}
    \label{fig:rlscg_time}
\end{figure}

\end{document}